\theoremstyle{plain}
\newtheorem{theorem}{Theorem}[section]
\newtheorem{corollary}[theorem]{Corollary}
\newtheorem{lemma}[theorem]{Lemma}
\newtheorem{proposition}[theorem]{Proposition}
\theoremstyle{definition}
\newtheorem{definition}[theorem]{Definition}
\newtheorem{example}[theorem]{Example}
\newtheorem{remark}[theorem]{Remark}
\newtheorem{question}[theorem]{Question}
\newcommand{\CPb}{\overline{\mathbb{CP}}{}^{2}}
\newcommand{\CP}{{\mathbb{CP}}{}^{2}}
\newcommand{\Z}{\mathbb{Z}}
\newcommand{\K}{{\rm K3}}
\def\Ker{\operatorname{Ker}}
\def\Int{\operatorname{Int}}
\def\Diff{\operatorname{Diff}}
\def\Re{\operatorname{Re}}
\def\id{\operatorname{id}}
\def\Mod{\operatorname{Mod}}
\def\Crit{\operatorname{Crit}}
\def\Push{\operatorname{Push}}
\def\Sign{\operatorname{Sign}}
\def \x {\times}
\def \eu{{\text{e}}}
\begin{document}

\title[Multisections and symplectic $4$-manifolds]
{Multisections of Lefschetz fibrations and topology of symplectic $4$-manifolds}

\author[R. \.{I}. Baykur]{R. \.{I}nan\c{c} Baykur}
\address{Department of Mathematics and Statistics, University of Massachusetts, Amherst, MA 01003-9305, USA}
\email{baykur@math.umass.edu}

\author[K. Hayano]{Kenta Hayano}
\address{Department of Mathematics, Graduate School of Science, Hokkaido University, Sapporo, Hokkaido 060-0810, Japan}
\email{k-hayano@math.sci.hokudai.ac.jp}

\begin{abstract}
We initiate a study of positive multisections of Lefschetz fibrations via positive factorizations in framed mapping class groups of surfaces. Using our methods, one can effectively capture various interesting symplectic surfaces in symplectic $4$-manifolds as multisections, such as Seiberg-Witten basic classes and exceptional classes, or branched loci of compact Stein surfaces as branched coverings of the $4$-ball. Various problems regarding the topology of symplectic $4$-manifolds, such as the smooth classification of symplectic Calabi-Yau $4$-manifolds, can be translated to combinatorial problems in this manner. After producing special monodromy factorizations of Lefschetz pencils on symplectic Calabi-Yau $\K$ and Enriques surfaces, and introducing monodromy substitutions tailored for generating multisections, we obtain several novel applications, allowing us to construct: new counter-examples to Stipsicz's conjecture on fiber sum indecomposable Lefschetz fibrations, non-isomorphic Lefschetz pencils of the same genera on the same new symplectic $4$-manifolds, the very first examples of exotic Lefschetz \textit{pencils}, and new exotic embeddings of surfaces.
\end{abstract}

\maketitle

\tableofcontents

\section{Introduction} 

Since the groundbreaking work of Donaldson it is known that every symplectic $4$-manifold admits a symplectic Lefschetz pencil \cite{Donaldson}, and conversely, every Lefschetz fibration with non-empty critical locus admits a symplectic structure \cite{GS}. On the other hand, Lefschetz pencils\,/\,fibrations are determined by their monodromy factorizations, which are prescribed by products of positive Dehn twists isotopic to identity\,/\,boundary multitwist on the fiber \cite{Kas_1980, Matsumoto3}. These results yield a combinatorial description of symplectic $4$-manifolds in terms of ordered tuples of isotopy classes of simple closed curves on an orientable surface. Here we will extend this fundamental approach, by introducing and studying positive factorizations in a \textit{framed mapping class group}, so as to describe symplectic $4$-manifolds together with various important symplectic surfaces in them in terms of ordered simple closed curves and arcs between marked points on an orientable surface.

Let $X$ be a closed oriented $4$-manifold equipped with a Lefschetz fibration \linebreak $f\colon X \to S^2$. We call an embedded, possibly disconnected surface $S$ in $X$ a \emph{multisection} or \emph{$n$-section} if $f|_S \colon X \to S^2$ is an $n$-fold branched cover with only simple branched points. We assume that both Lefschetz critical points and branched points conform to local complex models, that is, we work with \textit{positive} Lefschetz fibrations and \textit{positive} branched points. Precise definitions and the basic background material are given in Section~\ref{Preliminaries} below.

The first main result of our article is the description of multisections and their ambient topology via positive factorizations in framed mapping class groups, given in detail in Theorem~\ref{mainthm}. The framings amount to working with a new mapping class group of a compact oriented surface with marked boundary circles (one marked point on each boundary component), which consists of isotopy classes of orientation-preserving self-diffeomorphisms that are allowed to swap boundary components while matching the marked points. This group is naturally isomorphic to the mapping class group of a \textit{closed} surface with attached vectors at a finite \textit{set} of marked points, so as to frame a tubular neighborhood of the multisection $S$ (where the end points of these vectors, and equivalently the marked points on the boundaries trace a push-off of $S$). In Section~\ref{MCG}, leading to the proof of this theorem, we introduce the notion of positivity for monodromy factorizations in this more general setting. As we will show, from these positive factorizations, for each multisection $S$, one can easily read off the degree (i.e. the number of times $S$ intersects the fibers), topology (number of components and genera of each component of $S$) and the self-intersection numbers of the components of $S$. Here is our main theorem of Section~\ref{MCG}, stated for a connected $S$ for simplicity ---while the reader might want to turn to that section for the description of various mapping classes appearing in the below factorizations: 

\begin{theorem} \label{firstquote}
A genus-$g$ Lefschetz fibration $(X,f)$ with a self-intersection $m$ connected $n$-section $S \subset X$ with $k$ branched points away from $\Crit(f)$, and $r$ branched points at Lefschetz singularities corresponding to vanishing cycles $c_1,\ldots, c_r$, among $c_1, \ldots, c_l$ yields to a lift of the monodromy factorization
\begin{equation*}
\tilde{\tau}_{\alpha_k} \cdots \tilde{\tau}_{\alpha_1} \cdot t_{\widetilde{c_{l}}} \cdots t_{\widetilde{c_{r+1}}}\cdot \widetilde{t_{c_{r}}}\cdots \widetilde{t_{c_{1}}} = t_{\delta_1}^{a_1}\cdots t_{\delta_n}^{a_n}, 
\end{equation*}
in $\Mod(\Sigma_{g}^{n}; \{u_1,\ldots, u_n\})$, where $\{u_1,\ldots, u_n\}$ is a subset of $\partial \Sigma_{g}^{n}$ which covers all the elements of $\pi_0(\partial \Sigma_{g}^{n})$, $\tilde{\tau}_{\alpha_i}$ is a lift of half twist as described in Figure~\ref{lift_monodromy1},  and $\widetilde{t_{c_i}}$ is a lift of the Dehn twist $t_{c_i}$ as described in Figure~\ref{lift_monodromy2}, such that all $u_j$ are hit by the collection of $\alpha_i$ and $c_i$. On the other hand, $\widetilde{c_j}$ is a simple closed curve in $\Sigma_g^n$ which is isotopic to $c_j$ via the inclusion $i:\Sigma_g^n\hookrightarrow \Sigma_g$, and $\{\delta_1, \ldots, \delta_n\}$ is a set of simple closed curves parallel to $\partial \Sigma_{g}^{n}$, where 
\[g(S)= \frac{1}{2}(k+r) - n + 1 \, \ \text{and} \ m=-(\Sigma_{i=1}^n a_i) + 2k+ r \, .\]
Conversely, from any such relation in $\Mod(\Sigma_{g}^{n}; \{u_1,\ldots, u_n\})$, subject to the conditions listed above, one can construct a genus-$g$ Lefschetz fibration $(X,f)$ with a connected $n$-section $S$ of genus $g(S)$ and self-intersection $m$ as above, whose monodromy factorization is given by the image of the factorization on the left hand side under the homomorphism forgetting $\{u_1,\ldots, u_n\}$.
\end{theorem}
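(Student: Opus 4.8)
The plan is to establish the stated correspondence in both directions, taking as foundational input the classical dictionary between Lefschetz fibrations over $S^2$ and positive factorizations of the identity in $\Mod(\Sigma_g)$ due to Kas and Matsumoto, the Riemann--Hurwitz formula for branched covers, and the structure of the framed mapping class group $\Mod(\Sigma_{g}^{n}; \{u_1,\ldots, u_n\})$ together with the explicit lifts $\tilde{\tau}_{\alpha_i}$ and $\widetilde{t_{c_i}}$ introduced in Section~\ref{MCG}. The two directions are not independent: once the forward construction pins down the geometric meaning of each generator appearing in the factorization, the converse amounts to running the same local models in reverse and then verifying that the pieces assemble into a single globally embedded surface carrying the advertised invariants.

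For the forward direction, I would fix a regular fiber $F \cong \Sigma_g$ disjoint from all critical values of $f$ and all branch values of $f|_S$, remove open tubular neighborhoods of the $n$ points of $S \cap F$ to realize $\Sigma_g^n \subset F$, and record on each resulting boundary circle the marked point $u_j$ cut out by a chosen push-off of $S$; this push-off is exactly the normal framing data. Choosing a Hurwitz system of arcs from the base point to the critical and branch values, I would read off the local monodromy at each: a Lefschetz critical value whose vanishing cycle $c_j$ misses $S$ contributes the Dehn twist $t_{\widetilde{c_j}}$ about the isotopic copy $\widetilde{c_j} \subset \Sigma_g^n$; a Lefschetz critical value carrying a branch point of $S$ contributes the prescribed lift $\widetilde{t_{c_i}}$ of Figure~\ref{lift_monodromy2}; and a simple branch value of $f|_S$ over a regular value of $f$ contributes a half-twist interchanging two marked points, lifted to $\tilde{\tau}_{\alpha_i}$ as in Figure~\ref{lift_monodromy1}. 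Since $f$ fibers over $S^2$, the total product is isotopic to the identity on $F$, so in the framed group it is supported near the boundary and, by the structure of $\Mod(\Sigma_g^n;\{u_j\})$, equals a product of boundary Dehn twists $t_{\delta_1}^{a_1}\cdots t_{\delta_n}^{a_n}$, the powers $a_j$ being precisely the boundary twisting measured by the push-off; connectedness of $S$ is equivalent to transitivity of the covering monodromy, which is the requirement that the collection of $\alpha_i$ and $c_i$ hit every $u_j$.

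For the converse I would first push the given relation forward under the homomorphism forgetting $\{u_1,\ldots,u_n\}$ and capping the boundary. Under this map each $t_{\delta_j}^{a_j}$ dies, and each half-twist $\tilde{\tau}_{\alpha_i}$ dies as well (a half-twist supported in a disk about an arc becomes isotopic to the identity once the endpoints are unmarked and the boundary circles are capped), while $t_{\widetilde{c_j}} \mapsto t_{c_j}$ and $\widetilde{t_{c_i}} \mapsto t_{c_i}$, so the image is exactly $t_{c_l}\cdots t_{c_1} = 1$ in $\Mod(\Sigma_g)$. By Kas--Matsumoto this positive factorization builds a genus-$g$ Lefschetz fibration $(X,f)$ with vanishing cycles $c_1,\ldots,c_l$. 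I would then reconstruct $S$ by using the marked points to specify $n$ disjoint sections of $f$ over the complement of the branch values, and the arcs and lifts to prescribe the local branched-cover models: each $\alpha_i$ glues two sections across a simple branch point over a regular fiber, and each branch point at a Lefschetz singularity is modelled on the standard positive picture encoded by $\widetilde{t_{c_i}}$. The marked-point data simultaneously fixes the normal framing, so $S$ is a genuine $n$-section once one checks that these local models are mutually compatible along the Hurwitz system.

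The numerical claims then follow from two bookkeeping computations. As $f|_S\colon S \to S^2$ is a connected degree-$n$ cover with exactly $k+r$ simple branch points, Riemann--Hurwitz gives $2-2g(S) = 2n-(k+r)$, that is $g(S) = \frac{1}{2}(k+r)-n+1$. For the self-intersection I would compute the normal Euler number relative to the marked-point push-off: each unit of boundary twisting along $\delta_j$ contributes $-1$, producing the term $-(\sum_{i=1}^{n} a_i)$, while the $k$ branch points over regular values and the $r$ branch points at Lefschetz singularities contribute with weights $2$ and $1$ respectively, giving $+2k+r$ and hence $m = -(\sum_{i=1}^{n} a_i) + 2k + r$. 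The main obstacle, and the step deserving the most care, is the compatibility check in the converse: one must confirm that the specific lifts of Figures~\ref{lift_monodromy1} and~\ref{lift_monodromy2} encode precisely the positive local branched-cover models, so that the sections prescribed over the Hurwitz arcs glue into one embedded surface with no spurious self-intersections, and that the resulting framing is exactly the one recorded by the $u_j$. Everything else reduces to the classical fibration dictionary together with the two Euler-characteristic counts above.
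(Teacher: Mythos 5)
Your proposal is correct and follows essentially the same route as the paper's proof: it reads off the lifted local monodromies of Lemmas~\ref{lem_lift_monodromy1} and~\ref{lem_liftmonodromy2} along a Hurwitz system, using a push-off of $S$ as the framing datum that pins down the boundary powers $a_j$, reverses the construction by pasting the same local models for the converse, and extracts the numerical invariants from the branched-cover Euler characteristic count together with the local intersection numbers ($2$ at each regular branch point, $1$ at each branch point on a Lefschetz singularity, and $-\sum_{i=1}^{n} a_i$ from the boundary twisting). The only cosmetic difference is that the paper computes $g(S)$ via the graph $\Gamma$ with vertices $u_i$ and edges the branch points, giving $\chi(S)=2v-e$, which is exactly your Riemann--Hurwitz count in disguise.
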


\noindent Moreover, up to an extended set of Hurwitz moves, there exists a one-to-one correspondence between Lefschetz fibrations with multisections and positive factorizations in the above framed mapping class group.\cite{BH2}

As observed by Donaldson and Smith \cite{Donaldson_Smith_2003}, any, possibly disconnected symplectic surface $S$ in $(X, \omega)$ can be indeed realized as a multisection of a high enough degree Lefschetz pencil on $X$, which does not go through any Lefschetz critical points. Our theorem therefore extends the combinatorial interpretation of a symplectic \linebreak $4$-manifold, which couples the results of Donaldson and Gompf with the earlier works of Kas and Matsumoto, to that of a symplectic $4$-manifold and disjoint symplectic surfaces in it in terms of  ordered tuples of interior curves $\widetilde{c_1}, \ldots, \widetilde{c_l}$ and arcs $\alpha_1, \ldots, \alpha_r$ with end points on marked points $u_1, \ldots, u_n$ on distinct boundary components of $\Sigma_g^n$ (corresponding to the factors $t_{\widetilde{c_i}}$, and $\tilde{\tau_{\alpha_j}}$, respectively). 
On the other hand, it was shown by Loi and Piergallini \cite{LP} that any compact Stein surface $(X,J)$ can be obtained as a covering of the unit $4$-ball $D^4$ branched along a \textit{braided surface} $S$, such that the composition $f: X \to D^4 \to D^2$ is an allowable Lefschetz fibration, along with the obvious converse result. 
In this case, we obtain a similar combinatorial description of a Stein surface $(X,J)$ together with the branched locus $S$ in terms of \textit{pairs} of arcs $\alpha'_1, \alpha''_1, \ldots, \alpha'_r$, each with end points on the marked points on the same pair of distinct boundary components of $\Sigma_g^n$ (corresponding to the factors $\widetilde{t_{c_i}}$). 
Although we note that the latter can be always perturbed (Remark~\ref{rem_perturbation}) to a factorization which only consists of factors $t_{\widetilde{c_i}}$ 
and $\tilde{\tau_{\alpha_j}}$, we present our results in this full generality so as to not only mark the case of compact Stein surfaces, but also because we often find it useful to first produce factorizations containing multisections going through Lefschetz critical points. 

We should make two remarks here. First, although the framed mapping class group $\Mod(\Sigma_{g}^{n}; \{u_1,\ldots, u_n\})$ has a large set of generators, which for instance involve boundary pushing maps (lifts of point pushing maps), what we have manifested in Theorem~\ref{firstquote} is that for the geometric situations discussed above, it suffices to work with boundary twists and usual Dehn twists. Secondly, our framed mapping class group is larger than the mapping class group of a surface with boundary which consists of isotopy classes of self-diffeomorphisms that fix \textit{each boundary component}, the latter being the natural mapping class group to work with when dealing with \textit{$n$ disjoint sections}. The distinction between these two groups is analogous to that of the framed surface braid group versus the framed pure surface braid group, which have appeared in two recent works that are worth mentioning here: Bellingeri and Gervais studied the exact sequences relating these braid groups \cite{BG}, whereas Massuyeau, Oancea and Salamon used the same groups to describe the monodromy action of the fundamental group on the first homology of the fiber in terms of the Picard-Lefschetz intersection data associated to vanishing cycles of a given Lefschetz fibration \cite{MOS}. 


Combining the seminal work of Taubes \cite{T, T2} and Donaldson \cite{Donaldson}, and following the ideas of Donaldson and Smith \cite{Donaldson_Smith_2003} mentioned above, a blow-up of any given symplectic $4$-manifold $X$ with $b^+(X) > 1$ admits a Lefschetz fibration with respect to which all Seiberg-Witten basic classes are multisections (called \emph{standard surfaces} in \cite{Donaldson_Smith_2003}), as discussed in the Appendix~\ref{SW}. Translating this to positive mapping class group factorizations as we prescribed in Section~\ref{MCG}, we conclude that symplectic $4$-manifolds and their Seiberg-Witten basic classes can be \emph{a priori} represented combinatorially in terms of our positive factorizations. Section~\ref{genus3counter} contains many examples of Kodaira dimension zero symplectic $4$-manifolds, where all the Seiberg-Witten basic classes are represented by a collection of $(-1)$-multisections  (as dictated by the blow-up formula for Seiberg-Witten invariants) of the constructed Lefschetz fibrations on them. In the Appendix~\ref{knotsurgery}, we present Kodaira dimension $1$ examples; namely, we carry out a sample calculation of monodromy factorizations of Lefschetz fibrations on the knot surgered elliptic surfaces which capture all their Seiberg-Witten basic classes. 

The remaining Sections~$4$--$7$ of the article gather a variety of applications, relying on the constructive converse direction of our main theorem. Each section focuses on a different problem related to the topology of symplectic $4$-manifolds and Lefschetz fibrations on them, yet what is in common for all is the essential use of our mapping class group techniques involving multisections. The novelty of ideas and techniques employed in our constructions of examples in Section~$5$--$7$ can easily be seen to amount to \textit{recipes} one can employ with the right set of of Lefschetz fibrations and multisections in hand, where we will be focusing on producing the \textit{smallest} genus examples of each kind, which are the hardest to obtain in our experience. 

In Section~\ref{SCY} we provide an alternate approach to the \textit{smooth} classification of symplectic $4$-manifolds of Kodaira dimension zero, i.e. (blow-ups) of symplectic Calabi-Yau $4$-manifolds. The only known examples of Kodaira dimension zero symplectic $4$-manifolds are torus bundles over tori, the $\K$ and the Enriques surfaces, which, up to diffeomorphisms, conjecturally exhaust all the possibilities. Using our work from Section~\ref{MCG} and the following theorem we prove in Section~\ref{SCY} (Theorem~\ref{SCYLF}), we translate the problem to a combinatorial one (Theorem~\ref{SCYLF}, Corollary~\ref{SCYcor} and Question~\ref{SCYconjecture}):

\begin{theorem}
Let $(X,f)$ be a genus-$g$ Lefschetz fibration with $g \geq 2$, and $X$ be neither rational nor ruled. Then, there exists a symplectic form $\omega$ on $X$ compatible with $f$ such that $(X,\omega)$ is a (blow-up of) a symplectic Calabi-Yau $4$-manifold, if and only if there is a disjoint collection of $(-1)$-spheres that are $n_j$-sections of $(X,f)$ such that $\sum_j n_j = 2g-2$.
\end{theorem}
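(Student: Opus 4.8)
The plan is to connect the Kodaira dimension zero condition, which is a statement about the canonical class $K_X$, to the existence of a multisection that is a disjoint union of $(-1)$-spheres summing to the right degree. The key bridge is the adjunction-type formula relating the canonical class of the total space of a Lefschetz fibration to the fiber class and the multisection.

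First I would recall the standard relation for a genus-$g$ Lefschetz fibration $(X,f)$: the canonical class satisfies $K_X \equiv (2g-2)[\sigma] - [F] + (\text{correction from singular fibers})$ when a section $\sigma$ is available, but since we only have multisections, I would instead work directly with the numerical characterization of Kodaira dimension zero. By Taubes' work and the symplectic Kodaira dimension theory, $(X,\omega)$ with $X$ neither rational nor ruled is (a blow-up of) a symplectic Calabi-Yau exactly when its minimal model has torsion canonical class; equivalently, $K_X$ is a nonnegative linear combination of exceptional $(-1)$-sphere classes. The aim of the forward direction is to produce, from the symplectic Calabi-Yau condition, a symplectic representation of $K_X$ (or of the exceptional classes assembled appropriately) as a disjoint collection of $(-1)$-spheres, and then to realize those spheres as multisections via the Donaldson--Smith result already invoked in the excerpt.

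The heart of both directions is the numerical computation of the degree. For an $n_j$-section $S_j$ that is a $(-1)$-sphere, Theorem~\ref{firstquote} gives $g(S_j)=0$ and the self-intersection formula $m = -1$. More importantly, the adjunction formula in $X$ reads $2g(S_j)-2 = K_X\cdot [S_j] + [S_j]^2$, so a $(-1)$-sphere has $K_X \cdot [S_j] = -1$. Meanwhile $[S_j]\cdot[F] = n_j$ since $S_j$ is an $n_j$-section. Summing over a disjoint collection $\{S_j\}$, the class $\Sigma = \sum_j [S_j]$ satisfies $K_X \cdot \Sigma = -\sum_j 1$ and $\Sigma \cdot [F] = \sum_j n_j$. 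The key identity to exploit is that the fiber class of a genus-$g$ Lefschetz fibration satisfies $K_X \cdot [F] = 2g-2$, which follows from the adjunction formula applied to a regular fiber (a genus-$g$ symplectic surface with $[F]^2=0$). I would then show that the condition $\sum_j n_j = 2g-2$ is precisely the numerical shadow of the statement that $\Sigma$ represents $-K_X$ up to the fiber class: indeed, $K_X + \Sigma$ should pair trivially with $[F]$, forcing $-K_X$ to be, on the nose at the level of the relevant pairings, carried by the $(-1)$-sphere multisections. Conversely, given such a disjoint collection of $(-1)$-sphere sections summing in degree to $2g-2$, one checks that the minimal model obtained by blowing them down has trivial (torsion) canonical class, yielding the symplectic Calabi-Yau property.

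The hard part will be the forward direction: upgrading the purely numerical/homological statement into honest disjoint symplectically embedded $(-1)$-spheres that are simultaneously multisections of the \emph{given} fibration $f$, rather than of some auxiliary high-degree pencil. The Donaldson--Smith machinery realizes symplectic surfaces as multisections of a pencil one is free to choose, but here $f$ is fixed, so I expect the real work to lie in arranging the exceptional spheres to be holomorphic (or $J$-holomorphic for an $f$-compatible almost complex structure) and transverse to the fibers, then controlling their mutual disjointness and the branching data so that the degree count $\sum_j n_j = 2g-2$ comes out exactly. I anticipate needing positivity of intersections for $J$-holomorphic curves together with the compatibility of $\omega$ with $f$ (the freedom of choosing $\omega$ in the statement is what makes this feasible), and the delicate point will be ruling out fiber components in the exceptional classes so that each $S_j$ genuinely meets the generic fiber in $n_j$ points.
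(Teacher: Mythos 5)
Your forward direction is, in outline, the paper's own argument: for a Gompf form $\omega$ compatible with $f$ and a generic $\omega$-compatible $J$ (so that the fibers are $J$-holomorphic), Taubes' work \cite{T,T2} (and Li--Liu \cite{LiLiu} when $b^+=1$, since $X$ is not rational or ruled) represents the exceptional classes by disjoint $J$-holomorphic $(-1)$-spheres; positivity of intersections with the fibers, together with relative minimality (which rules out fiber components), makes these spheres $n_j$-sections of the \emph{given} $f$ --- no auxiliary Donaldson--Smith pencil is needed --- and adjunction applied to $F$ gives $\sum_j n_j = K_X\cdot[F] = 2g-2$, using that $K_X \equiv \sum_j e_j$ modulo torsion on a blow-up of a symplectic Calabi-Yau. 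So the difficulties you defer in your last paragraph are resolved by exactly the tools you name there. The genuine gap is the \emph{converse}, which you compress into ``one checks that the minimal model obtained by blowing them down has trivial (torsion) canonical class.'' This is not a check, and it does not follow from your numerics. From $K_X\cdot[F]=2g-2$ and $\bigl(\sum_j [S_j]\bigr)\cdot[F]=\sum_j n_j = 2g-2$ you only get that $K_X-\sum_j[S_j]$ pairs to zero with the single class $[F]$; but blowing down the $S_j$ produces a manifold with torsion canonical class if and only if $K_X-\sum_j[S_j]$ is itself torsion (since $K_X=\pi^*K_{X'}+\sum_j[S_j]$), which is a vastly stronger statement. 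The paper fills this hole with Sato's theorem \cite{Sato_2013}: for a genus-$g\geq 2$ Lefschetz fibration on a non-minimal $X$ that is not rational or ruled, if the maximal collection of exceptional classes meets the fiber in exactly $2g-2$ points, then the canonical class is Poincar\'e dual to $\sum_j e_j$ \emph{modulo torsion} (the paper corrects Sato's statement to this rational-homology form). That is a substantial pseudo-holomorphic curve analysis, not an exercise. Moreover, to invoke it one must know that the given spheres constitute the \emph{maximal} collection of exceptional classes; this uses the Seiberg--Witten adjunction-inequality bound $\sum_j n_j \leq 2g-2$ taken over \emph{all} exceptional classes (established in the paper just before the theorem), so that once your collection attains $2g-2$ no further exceptional class can exist.

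Two smaller corrections. First, a sign: on a blow-up of a symplectic Calabi-Yau the blow-up formula gives $K_X \equiv +\sum_j e_j$ modulo torsion, so your $\Sigma=\sum_j[S_j]$ should represent $+K_X$, and it is $K_X-\Sigma$, not $K_X+\Sigma$, that pairs trivially with $[F]$ (both $K_X\cdot[F]$ and $\Sigma\cdot[F]$ equal $2g-2>0$). With your sign the converse could never close: blowing down spheres representing $-K_X$ would leave $\pi^*K_{X'}\equiv 2K_X$, not a torsion class. Second, your use of adjunction to deduce $K_X\cdot[S_j]=-1$ is only justified once the $S_j$ are known to be symplectic or $J$-holomorphic; smoothly embedded $(-1)$-spheres need not satisfy adjunction. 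In the forward direction this is automatic from Taubes, and in the converse it is not what is needed anyway.
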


\noindent Motivated by this, we introduce new techniques based on certain symmetries, to lift better understood relations from genus $0$ and $1$ surfaces to higher genus surfaces under involutions, so as to construct explicit monodromy factorizations of Lefschetz pencils on symplectic Calabi-Yau $\K$ and Enriques surfaces, i.e. minimal symplectic $4$-manifolds of Kodaira dimension zero, homeomorphic to $\K$ and Enriques surfaces, respectively; see Propositions~\ref{lem_signature_genus3LF} and~\ref{lem_pi1_genus2LF}.

In Section~\ref{Stipsicz} we turn to an interesting conjecture of Stipsicz on fiber sum indecomposable Lefschetz fibrations, which can be regarded as prime building blocks of any Lefschetz fibration via the fiber sum operation. In \cite{Stipsicz}, having proved the converse statement, Stipsicz conjectured that any fiber sum indecomposable Lefschetz fibration admits a $(-1)$-sphere section, an affirmative answer to which would allow one to think of any Lefschetz fibration to be obtained from Lefschetz pencils through blow-ups and fiber sums. Curiously, up to date, there was only one known counter-example to this conjecture, which was a genus-$2$ Lefschetz fibration constructed by Auroux, as observed by Sato in \cite{Sato_2008}. In Lemma~\ref{lem_lantern_relation}, we introduce a generalization of the lantern relation involving multisections, which allows us to braid exceptional sections into exceptional multisections of a new Lefschetz fibration obtained by a rational blow-down of the underlying symplectic $4$-manifold. Relying on this key lemma, and our special monodromy factorizations of symplectic Calabi-Yau Lefschetz fibrations obtained in Section~\ref{SCY}, where one can keep track of \textit{all} exceptional classes and sections, we prove that the above counter-example is not a mere exception (Theorems~\ref{thm_example_genus3} and \ref{thm_example_genus2}):
\begin{theorem}
There are several genus $3$ and genus $2$ fiber sum indecomposable Lefschetz fibrations on blow-ups of symplectic Calabi-Yau $4$-manifolds, which do not admit any $(-1)$ sphere sections. 
\end{theorem}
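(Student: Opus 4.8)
The plan is to realize these examples as carefully orchestrated rational blow-downs of blown-up symplectic Calabi-Yau Lefschetz pencils, and to certify the two defining properties—fiber sum indecomposability and the absence of $(-1)$-sphere sections—by separate arguments. First I would start from the explicit monodromy factorizations of genus-$2$ and genus-$3$ Lefschetz pencils on symplectic Calabi-Yau $\K$ and Enriques surfaces produced in Propositions~\ref{lem_signature_genus3LF} and~\ref{lem_pi1_genus2LF}. Since the ambient surface is Calabi-Yau, adjunction gives $F^2 = 2g-2$ for the pencil fiber $F$, so blowing up the base points produces genus-$g$ Lefschetz fibrations carrying exactly $2g-2$ disjoint exceptional $(-1)$-sphere \emph{sections}, matching the count $\sum_j n_j = 2g-2$ (with all $n_j=1$) dictated by Theorem~\ref{SCYLF}. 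These particular pencils are chosen because one can keep track of every exceptional class along the factorization. I would then apply the generalized lantern substitution of Lemma~\ref{lem_lantern_relation} repeatedly: each substitution is a positive word replacement that, under the correspondence of Theorem~\ref{mainthm}, performs a rational blow-down of the total space while \emph{braiding} a cluster of exceptional $1$-sections into a single exceptional multisection of fiber-degree at least $2$. Carrying out enough of these so that no exceptional $1$-section survives, while preserving $\sum_j n_j = 2g-2$ over the resulting multisections, yields by Theorem~\ref{SCYLF} a relatively minimal genus-$g$ Lefschetz fibration $(X,f)$ whose total space is again a blow-up of a symplectic Calabi-Yau surface. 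For $g=2$ the target configuration is a single $(-1)$-sphere $2$-section, and for $g=3$ two $(-1)$-sphere $2$-sections; varying the starting pencils and braiding patterns is what produces \emph{several} such examples.

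For the absence of a $(-1)$-sphere section I would argue homologically, using gauge theory to pass from geometry to homology. By Taubes' theorem every embedded symplectic $(-1)$-sphere represents an exceptional Seiberg-Witten class, so it suffices to show that no exceptional class $E$ satisfies $E \cdot F = 1$, where $F$ is the fiber. Writing the canonical class of the Calabi-Yau blow-up as a sum of the exceptional classes, adjunction gives $K \cdot F = 2g-2$ and $K \cdot E = -1$ for every exceptional $E$; combined with the explicit list of exceptional classes read off from the factorization and tracked through the rational blow-downs via the Fintushel--Stern gluing formula for Seiberg-Witten invariants, this lets me compute $E \cdot F$ for each class and verify it is always at least $2$. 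In particular every exceptional sphere meets a generic fiber at least twice, so no $(-1)$-section can exist.

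For fiber sum indecomposability I would use minimality. Since $(X,f)$ is relatively minimal, any fiber sum splitting $X = X_1 \#_f X_2$ into Lefschetz fibrations with nonempty critical loci would have both summands relatively minimal of fiber genus $g \geq 2$; by Usher's theorem on the minimality of symplectic fiber sums, $X$ would then have to be minimal. This contradicts the fact that $X$, being a genuine blow-up of a symplectic Calabi-Yau surface, is non-minimal. The only decompositions escaping Usher's conclusion involve rational or ruled summands, which I would exclude from the outset, together with the remaining degenerate low-complexity splittings, using the additivity of the Euler characteristic, signature, and number of critical points under fiber sum at these smallest genera.

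The main obstacle is the second step: eliminating \emph{all} $(-1)$-sphere sections rather than merely reducing their number. The braiding substitutions must be orchestrated so that, after the rational blow-downs, the complete set of exceptional classes—not just the ones we visibly merged—has fiber-degree at least $2$, which demands the full bookkeeping of exceptional classes made possible by the special Calabi-Yau factorizations of Section~\ref{SCY} and the rational blow-down formula for Seiberg-Witten invariants. This is tightest precisely at the smallest genera $g=2,3$, where $2g-2$ is so small that there is almost no room to distribute the required multisection degrees, which is exactly why these minimal-genus examples are the hardest to produce.
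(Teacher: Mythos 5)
Your outline reproduces the paper's own strategy step by step: the starting data are the explicit symplectic Calabi-Yau pencils of Propositions~\ref{lem_signature_genus3LF} and~\ref{lem_pi1_genus2LF}, the braiding lantern substitutions of Lemma~\ref{lem_lantern_relation} perform the rational blow-downs that merge exceptional sections into exceptional multisections (these are exactly the fibrations $(X_{(2,2)},f_{(2,2)})$, $(X_{(4)},f_{(4)})$ and $(Y_{(2)},h_{(2)})$ of Theorems~\ref{thm_example_genus3} and~\ref{thm_example_genus2}), the exclusion of $(-1)$-sphere sections comes from the Taubes/Li--Liu representation of exceptional classes by disjoint holomorphic multisections together with the adjunction bound $\sum_j n_j \le 2g-2$, and indecomposability follows from non-minimality via Usher's theorem. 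So the architecture is the paper's.

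The genuine gap is in the genus-$2$ case. Every gauge-theoretic input you invoke --- Theorem~\ref{SCYLF}, the realization of exceptional classes as disjoint $J$-holomorphic $n_j$-sections, and the bound $\sum_j n_j \le 2g-2$ --- carries the hypothesis that the total space is neither rational nor ruled; this is precisely where Li--Liu \cite{LiLiu} is needed when $b^+=1$, and the bound genuinely fails for rational surfaces, whose pencils can have more than $2g-2$ base points. For the genus-$3$ examples the hypothesis is automatic, since rational blow-down preserves $b^+=3>1$. But the genus-$2$ manifold $Y_{(2)}$ has $b^+=1$, so before any of your exceptional-class bookkeeping can start you must prove that $Y_{(2)}$ is not rational or ruled; your proposal never does this, and neither the SCY property of the input fibration nor the Fintushel--Stern gluing formula (itself delicate when $b^+=1$, because of chamber dependence) hands it to you. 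The paper supplies the missing idea as Proposition~\ref{Gompflemma}: the lantern $(-4)$-sphere meets an exceptional section once, so Gompf's mirror blow-down lemma identifies $Y_{(2)}$ with an honest blow-down of $Y_{(1,1)}$, hence with a blow-up of the minimal homotopy Enriques surface, which is not rational or ruled. A secondary remark: the Fintushel--Stern Seiberg--Witten formula is also unnecessary in the $b^+>1$ cases, since once the blown-down manifold is known to be a non-rational, non-ruled SCY blow-up, the visible multisections of total fiber degree $2g-2$ already exhaust the exceptional classes by the adjunction bound --- this is exactly how the paper concludes, with no Seiberg--Witten computation at all.
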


Section~\ref{Nonisomorphic} deals with the \textit{diversity} of Lefschetz pencils/fibrations on a symplectic $4$-manifold. Namely, we prove that blow-ups of symplectic Calabi-Yau $\K$ surfaces can be supported by non-isomorphic Lefschetz pencils of the same genera and same number of base points, which have ambiently homeomorphic fibers. Park and Yun used monodromy groups to construct pairs of non-isomorphic Lefschetz fibrations on knot surgered elliptic surfaces, which are Kodaira dimension $1$ symplectic $4$-manifolds \cite{ParkYun}, and more recently, the first author proved that blow-ups of any symplectic $4$-manifold which is not rational or ruled carry arbitrarily large number of non-isomorphic Lefschetz fibrations \cite{BaykurLuttingerLF}. Here we show that for a certain configuration of Lefschetz vanishing cycles and $(-1)$-sphere sections, one can perform a pair of monodromy substitutions which amount to rational-blowdowns that ``mirror'' each other's topological effect. These result in Lefschetz fibrations on the \textit{same} symplectic $4$-manifold with ambiently homeomorphic fibers. Building on our examples of monodromy factorizations tailored specifically to contain such configurations, we then obtain the following on symplectic $4$-manifolds of Kodaira dimension $0$ (Theorem~\ref{nonisomLFs}): 

\begin{theorem} 
There are pairs of genus-$g$ relatively minimal non-isomorphic Lefschetz pencils $(X, f_i)$, $i=1,2$, where $g$ can be taken as small as $3$, or arbitrarily large. 
\end{theorem}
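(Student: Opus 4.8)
The plan is to build the two pencils by starting from a single, explicitly understood model and then modifying it in two parallel ways that are topologically indistinguishable at the level of the underlying $4$-manifold but distinguishable at the level of the pencil. Concretely, I would begin with the genus-$g$ Lefschetz pencils on blow-ups of the symplectic Calabi-Yau $\K$ surface whose monodromy factorizations are produced in Section~\ref{SCY} (for instance Proposition~\ref{lem_signature_genus3LF} in the base case $g=3$), presenting them in the framed mapping class group $\Mod(\Sigma_{g}^{n};\{u_1,\ldots,u_n\})$ of Theorem~\ref{firstquote} so that \emph{all} exceptional classes are visible as $(-1)$-sphere sections or multisections. Within such a factorization I would isolate a sub-word together with a prescribed collection of disjoint $(-1)$-sphere sections whose regular neighborhood is a linear plumbing of spheres admitting a rational blow-down.

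The heart of the construction is a \emph{pair} of monodromy substitutions applied to this common sub-word. Each substitution is of lantern type (a variant of Lemma~\ref{lem_lantern_relation}) and realizes a rational-blowdown of the plumbing, but the two substitutions are arranged to ``mirror'' one another: they excise and reinsert the exceptional sections in opposite roles, so that the two resulting factorizations describe genus-$g$ Lefschetz pencils $(X,f_1)$ and $(X,f_2)$ on the \emph{same} symplectic $4$-manifold, with the same number of base points and ambiently homeomorphic fibers. I would then carry out the bookkeeping dictated by Theorem~\ref{firstquote}: check that the substituted words are still positive factorizations of the boundary multitwist respecting the marked boundary data, that no vanishing cycle bounds a disk so that relative minimality is retained, and that the Euler characteristic and signature are preserved, confirming that $X$ is unchanged up to diffeomorphism.

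The hard part is establishing non-isomorphism, since the two pencils agree in every gross invariant: diffeomorphism type of $X$, fiber genus, number of base points, number of singular fibers, and the ambient homeomorphism type of the fibers. I would distinguish them by a finer invariant read directly from the factorizations. Following Park and Yun~\cite{ParkYun}, the natural first candidate is the monodromy group, viewed up to conjugacy in the mapping class group, which is invariant under the extended Hurwitz moves and global conjugation; when the ``mirror'' symmetry forces these groups to coincide, the fallback invariant is the combinatorial pattern in which the exceptional $(-1)$-classes are distributed among honest sections versus genuine multisections. The key point is that an isomorphism $\phi\colon (X,f_1)\to (X,f_2)$ must, via Theorem~\ref{firstquote}, induce a Hurwitz-equivalence-plus-conjugation between the two factorizations carrying base-point sections to base-point sections and preserving the self-intersection and section/multisection data of the exceptional curves; a discrepancy in this pattern, built in by the mirrored roles of the exceptional classes, then obstructs any such $\phi$.

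Finally, to reach arbitrarily large $g$ I would run the construction in a stabilized family, either fiber-summing the smallest examples with a fixed auxiliary fibration or performing the same mirrored pair of substitutions inside the higher-genus symplectic Calabi-Yau factorizations, and verify that the distinguishing invariant survives stabilization. The main obstacle throughout is precisely this last issue: ensuring the chosen invariant is genuinely insensitive to the extended Hurwitz moves and conjugation of Theorem~\ref{firstquote} while still separating $f_1$ from $f_2$.
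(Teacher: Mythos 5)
Your outline has the right skeleton---start from the explicit genus-$3$ SCY factorization~\eqref{genus3factor} with its four exceptional sections, perform two ``mirrored'' braiding-lantern substitutions, and then distinguish the resulting fibrations---but both load-bearing steps are unsupported as you state them. First, the claim that matching Euler characteristic and signature ``confirms that $X$ is unchanged up to diffeomorphism'' is false, and this paper itself is the counterexample: in Section~\ref{Exotic}, two lantern substitutions applied to the very same sub-words of~\eqref{genus3factor}, differing only in which pair of sections gets braided, produce $X_{([2],1,1)}$ with $\kappa=1$, homeomorphic but \emph{not} diffeomorphic to the symplectic Calabi-Yau outputs $X_{(2,2)}$, $X_{(4)}$ (all such substitutions preserve $\eu$, $\sigma$ and $\pi_1$). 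The actual mechanism forcing the two outputs onto the same smooth $4$-manifold is Proposition~\ref{Gompflemma}: the two lantern configurations (parts (a) and (c) of~\eqref{genus3factor}) are chosen so that the corresponding disjoint $(-4)$-spheres $V_1, V_2$ each meet the \emph{common} exceptional section $S_2$ exactly once, whence rationally blowing down either $V_i$ is diffeomorphic to the ordinary blow-down of $S_2$, hence to each other. Without this $V_1 \cup S_2 \cup V_2$ configuration and Gompf's handle-calculus lemma, you have no argument that $f_1$ and $f_2$ live on the same $X$.

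Second, your non-isomorphism invariants do not work for this construction. The fallback you propose---the pattern in which exceptional classes distribute among sections versus multisections---is identical for the two fibrations: each mirrored substitution braids exactly two of the four sections, so both outputs have one exceptional bisection and two exceptional sections (this is precisely why the paper names them $f_{(2,1,1)}$ and $f_{(2',1,1)}$), and whether the monodromy groups differ is left entirely open in your sketch. The paper's invariant is simpler and decisive: the two lantern configurations are arranged so that the interior curve inserted by one substitution is non-separating in $\Sigma_3$ while the other is separating, so exactly one of the two fibrations has a reducible singular fiber---a count manifestly preserved by any isomorphism. Finally, for arbitrarily large $g$, your fiber-sum stabilization cannot yield \emph{pencils} (fiber sums are minimal by Usher's theorem, so carry no exceptional sections or base points), and no higher-genus SCY factorizations are available to re-run the construction; the paper instead applies degree doubling simultaneously to both pencils, using the Auroux--Katzarkov description to see that the separating vanishing cycle persists under doubling while doubling creates no new separating cycles.
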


In Section~\ref{Exotic} we investigate a natural question: does the \textit{topology} of a Lefschetz pencil (fiber genus, number of separating/non-separating vanishing cycles and base points) uniquely determine the diffeomorphism type of a symplectic $4$-manifold within its homeomorphism class? Here we answer this question in the negative by constructing the first examples of pairwise homeomorphic but not diffeomorphic symplectic $4$-manifolds, supported by Lefschetz pencils with the same topology. To the best of our knowledge, the only previously known examples of this type were the Lefschetz \textit{fibrations} of Fintushel and Stern on knot surgered elliptic surfaces --all of Kodaira dimension $1$, again. Here we construct the first examples of such pencils (Theorem~\ref{ExoticLP}):

\begin{theorem} 
There are genus-$3$ exotic Lefschetz pencils $(X_i,f_i)$, $i=0,1$, with symplectic Kodaira dimension $\kappa(X_i)=i$, where $X_i$ are homeomorphic to $\K \# \CPb$. Moreover, there are similar examples with arbitrarily high genus and the same topology for the singular fibers on higher blow-ups of homotopy $\K$s.
\end{theorem}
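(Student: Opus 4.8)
The plan is to realize both members of the pair as genus-$3$ Lefschetz pencils obtained from a single, explicitly understood positive factorization by torus surgeries realized as monodromy substitutions, arranged so that the combinatorial data of the pencil is preserved while the diffeomorphism type of the total space changes. I would take as the base object the genus-$3$ Lefschetz pencil supported by the symplectic Calabi-Yau $\K$ whose positive factorization is produced in Proposition~\ref{lem_signature_genus3LF}, chosen precisely because in it all exceptional classes and $(-1)$-sphere sections are visible, and because it contains the configurations we need. Carrying along the tracked exceptional section (equivalently, applying the generalized lantern substitution of Lemma~\ref{lem_lantern_relation} at the prescribed site) gives the first member $(X_0,f_0)$ with $X_0\cong \K\#\CPb$; since $X_0$ is a blow-up of the symplectic Calabi-Yau $\K$, Theorem~\ref{SCYLF}, or directly the blow-up behaviour of the symplectic Kodaira dimension, gives $\kappa(X_0)=0$.

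The core of the argument is the construction of the second member $(X_1,f_1)$. Using the dictionary of Theorem~\ref{firstquote}, I would locate inside the factorization a Lagrangian torus that is \emph{standard} with respect to $f_0$ --- concretely a genus-$1$ multisection assembled from a short chain of non-separating vanishing cycles, sitting in a fibered neighborhood disjoint from the base points and from the remaining Lefschetz critical points. Along this torus I would perform a knot surgery with the trefoil (equivalently, a $(2,3)$ logarithmic transform), a generalized torus surgery, and realize it as a monodromy substitution in the framed mapping class group $\Mod(\Sigma_3^{n};\{u_1,\dots,u_n\})$. The decisive feature, and the reason the multisection framework is indispensable here, is that the surgery is supported near a regular torus, so the substitution replaces a subword of Dehn twists by the \emph{same} number of Dehn twists of the \emph{same non-separating type}, while leaving the number of base points untouched. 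Consequently $(X_0,f_0)$ and $(X_1,f_1)$ have identical fiber genus, identical numbers of separating and non-separating vanishing cycles, and identical numbers of base points: they carry the same pencil topology. The resulting total space is $X_1\cong E(2)_{2,3}\#\CPb$, whose non-torsion canonical class forces $\kappa(X_1)=1$.

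To conclude, both $X_0$ and $X_1$ are simply connected with odd, indefinite intersection forms of the same rank and signature as $\K\#\CPb$, so Freedman's theorem makes them homeomorphic to $\K\#\CPb$ and hence to one another; combined with the previous paragraph this exhibits the two as homeomorphic symplectic $4$-manifolds supported by pencils with the same singular-fiber topology. As the symplectic Kodaira dimension is a diffeomorphism invariant and $\kappa(X_0)=0\neq 1=\kappa(X_1)$, the two manifolds are not diffeomorphic, so the pencils are exotic; alternatively, the Seiberg-Witten invariants distinguish the single basic class of $\K\#\CPb$ from the several basic classes produced by the multiple fibers of $E(2)_{2,3}\#\CPb$. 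The higher-genus statement then follows by running the identical recipe on the analogous genus-$g$ polarized K3 pencils, or on genus-raising fiber sums, over correspondingly higher blow-ups of homotopy K3s, placing the surgery torus away from the additional Lefschetz fibers so that the two members again share their singular-fiber topology.

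The step I expect to be the main obstacle is the middle one: exhibiting the required standard Lagrangian torus as an explicit genus-$1$ multisection inside the genus-$3$ factorization, and checking that the torus surgery along it is genuinely effected by a \emph{local}, \emph{positive} substitution that fixes every vanishing cycle up to its isotopy type together with every base point. Verifying that the substituted word remains a positive factorization equal to the prescribed boundary multitwist $t_{\delta_1}^{a_1}\cdots t_{\delta_n}^{a_n}$, and that the surgery both preserves $\pi_1$ and the intersection form and changes the smooth structure (equivalently, changes $\kappa$), is the delicate part on which the whole construction rests.
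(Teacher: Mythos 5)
Your first member $(X_0,f_0)$ is essentially the paper's (the blow-down of the symplectic Calabi-Yau genus-$3$ fibration along its remaining exceptional section), and your overall template --- produce both pencils from the explicit SCY factorization, distinguish them by the smooth invariant $\kappa$, and get the homeomorphism from Freedman --- is the paper's as well. But your construction of $(X_1,f_1)$ has a genuine gap, and it is precisely the step you flag as the main obstacle: there is no result, in this paper or elsewhere, realizing knot surgery or a logarithmic transform as a \emph{local positive} substitution in $\Mod(\Sigma_3^n;\{u_1,\ldots,u_n\})$ that preserves the vanishing cycle types and the base points. Worse, the torus you propose to surger is described inconsistently. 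A genus-$1$ multisection is a \emph{horizontal} surface: it meets every fiber positively, and by Theorem~\ref{mainthm} and Gompf's construction of compatible forms it can be made symplectic --- it is never Lagrangian. A torus ``assembled from a chain of vanishing cycles'' is a \emph{vertical} surface contained in the preimage of a loop in the base; it has zero algebraic intersection with the fiber and hence is not a multisection of any degree. No single torus can play both roles. In addition, knot surgery along a \emph{Lagrangian} torus does not stay in the symplectic category (one needs a symplectic torus, or must restrict to Luttinger surgeries), so even granting the surgery, the conclusion $\kappa(X_1)=1$ with $X_1$ symplectic would not follow; and knot surgery with the trefoil is not the same operation as a $(2,3)$-log transform.

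There is also a structural reason your mechanism cannot work for \emph{pencils}: the only known way to make knot surgery compatible with a Lefschetz fibration (Fintushel--Stern, reproduced in the paper's Appendix) is as a twisted fiber sum, i.e.\ a partial conjugation of a fiber-sum-decomposable factorization, and by Smith and Stipsicz such fibrations admit no $(-1)$-sphere sections --- so realizing the surgery monodromy-theoretically is incompatible with retaining base points, which is exactly what you require. Whether $E(2)_K\#\CPb$ carries a genus-$3$ pencil at all is open. The paper sidesteps torus surgery entirely: it applies the braiding lantern substitution of Lemma~\ref{lem_lantern_relation} at the sites (a) and (b) of Equation~\eqref{genus3factor}, \emph{both} braiding the same pair of sections $S_1,S_2$. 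These are rational blowdowns of two disjoint $(-4)$-spheres; they merge $S_1,S_2$ into a square-zero torus bisection, and the resulting $X_{([2],1,1)}$ is homeomorphic to $\K\#2\CPb$ but fails the SCY criterion of Theorem~\ref{SCYLF} (its exceptional \emph{spheres} now meet the fiber only $2\neq 2g-2=4$ times), while $c_1^2=0$ on its minimal model; hence $\kappa=1$, and blowing down one surviving section gives $(X_1,f_1)$. Note finally that even in the paper the genus-$3$ pair does \emph{not} have identical singular fibers (one pencil has a reducible fiber, the other does not); the ``same topology of singular fibers'' is achieved only at higher genus, by applying the partial doubling sequences $D=[1]$ and $D'=[2,3]$ of Lemma~\ref{matching} to Smith's genus-$8$ pencil on $\K$ and to the genus-$3$ $\kappa=1$ pencil, respectively. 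Your claim that your genus-$3$ pair already shares identical singular-fiber data rests entirely on the unsupported substitution.
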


Lastly, in the same section, we also show that a careful application of the same circle of ideas provide a new way of constructing exotic embeddings of surfaces in $4$-manifolds, i.e. $F_i$ in $X$, $i=1,2$, such that there are ambient homeomorphisms taking one to the other, but there exist no such ambient diffeomorphisms (Theorem~\ref{exoticknotting}):

\begin{theorem} 
There are exotic embeddings of genus-$3$ surfaces $F_i$ in a blow-up of a symplectic Calabi-Yau $\K$ surface such that $F_i$ is symplectic with respect to deformation equivalent symplectic forms $\omega_i$ on $X$, for $i=1,2$. 
\end{theorem}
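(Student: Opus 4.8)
The plan is to realize the two embeddings as genus-$3$ multisections of a common Lefschetz pencil structure on a fixed blow-up $X$ of a symplectic Calabi-Yau $\K$ surface, obtained from a single parent monodromy factorization by the pair of \emph{mirror} monodromy substitutions already used to produce the exotic Lefschetz pencils of Theorem~\ref{ExoticLP}. Concretely, I would start from the symplectic Calabi-Yau genus-$3$ factorization of Section~\ref{SCY}, in which one can track all exceptional classes together with a genus-$3$ multisection $F$, and apply the two rational-blowdown substitutions of Section~\ref{Exotic}. Because these substitutions mirror each other's effect on the total space while acting only on curves disjoint from the multisection, they yield two positive factorizations whose \emph{ambient} $4$-manifold is the same $X$, but whose distinguished genus-$3$ multisections $F_1,F_2$ are a priori different symplectic surfaces. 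The converse direction of Theorem~\ref{firstquote} then produces, for each $i$, a symplectic form $\omega_i$ on $X$ for which $F_i$ is symplectic; since $X$ is a blow-up of a symplectic Calabi-Yau $\K$ surface, the uniqueness up to deformation of symplectic Calabi-Yau structures on $\K$ and its blow-ups forces $\omega_1$ and $\omega_2$ to be deformation equivalent.

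Next I would verify that $F_1$ and $F_2$ are \emph{topologically} equivalent. Since the two multisections arise from mirror substitutions applied away from $F$, they carry the same degree, genus $g(F_i)=3$, and self-intersection $m$, and represent the same class in $H_2(X)$. As $X$ is simply connected, a van Kampen computation (using a sphere section meeting $F_i$) shows each complement $X\setminus\nu(F_i)$ is simply connected, has boundary the $S^1$-bundle over $\Sigma_3$ of Euler number $m$, and carries the same intersection form, determined by that of $X$ and the surface class. Freedman's classification in the relative setting, in Boyer's extension to simply-connected $4$-manifolds with boundary, then yields a homeomorphism $X\setminus\nu(F_1)\to X\setminus\nu(F_2)$ restricting to the identity on the common boundary; extending it by a bundle map over the identical tubular neighborhoods produces an ambient homeomorphism of $X$ carrying $F_1$ to $F_2$.

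The smooth distinction is the crux, and I would extract it from the exotic pencils themselves. In each factorization there is a canonically determined configuration of symplectic spheres lying in the complement of $F_i$, namely the linear plumbing that is rationally blown down by the corresponding substitution; collapsing it returns the exotic pair $X_0,X_1$ of Theorem~\ref{ExoticLP}, with $\kappa(X_0)=0\neq 1=\kappa(X_1)$ and hence distinct Seiberg-Witten invariants. An ambient diffeomorphism $\phi\colon(X,F_1)\to(X,F_2)$ would restrict to a diffeomorphism of complements, carry the one plumbing configuration to the other, and therefore descend to a diffeomorphism $X_0\cong X_1$, a contradiction. Combined with the symplectic statement of the previous paragraph, this yields the asserted exotic embeddings.

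The main obstacle is precisely this last step: one must ensure that the blown-down configuration is genuinely \emph{canonical}, i.e.\ recoverable from the smooth embedding $F_i\subset X$ up to the ambient diffeomorphism, so that $\phi$ really does induce $X_0\cong X_1$. This requires arranging the mirror substitutions so that the relevant $(-1)$-spheres and the plumbing they bound are characterized intrinsically---for instance as the unique such configuration in a fixed homology class disjoint from $F_i$---rather than as an artifact of the chosen factorization; controlling the homology classes of the exceptional spheres throughout the substitutions, which the bookkeeping of Section~\ref{SCY} makes possible, is what makes this canonicity, and hence the descent to the $\SW$-distinguished pair, go through. A secondary technical point is confirming simple-connectivity of the complements, which I would obtain from the presence of a sphere section meeting $F_i$ so that the relative Freedman-Boyer theorem applies cleanly.
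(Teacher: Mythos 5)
Your overall architecture (same ambient manifold via mirror substitutions, a topological pair-equivalence, then a smooth obstruction) superficially parallels the paper, but the smooth obstruction---which you correctly identify as the crux---rests on a step that fails and is not the paper's step. You propose to distinguish $F_1$ from $F_2$ by the plumbing configurations attached to the rational blowdowns, arguing that an ambient diffeomorphism of pairs would carry one configuration to the other and then ``descend'' to a diffeomorphism of the exotic pair of Theorem~\ref{ExoticLP}. Nothing makes such a configuration of spheres canonical: a self-diffeomorphism of $X$ has no reason to preserve a chosen plumbing (fixing its homology class gives no uniqueness statement for smooth sphere representatives), and even granting that, a diffeomorphism of complements does not automatically descend under rational blowdown---it must first be arranged to extend over the rational balls. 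You flag this as the ``main obstacle'' but offer no mechanism that closes it. There is also an internal inconsistency: the substitutions producing the exotic pencils of Theorem~\ref{ExoticLP} are precisely the \emph{non}-mirror choices (that is why $X_0\not\cong X_1$), so they cannot simultaneously serve as the mirror pair that keeps the ambient manifold fixed. The paper's proof avoids all of this. Its surfaces are the regular \emph{fibers} $F_1,F_2$ of the two fibrations $f_{(2,2)}$ and $f_{(3,1)}$ on $X_{(2,2)}\cong X_{(3,1)}$ (diffeomorphic by Proposition~\ref{Gompflemma}), and the smooth obstruction is purely homological: by Li's theorem \cite{Li_1999}, any diffeomorphism of these non-rational, non-ruled symplectic $4$-manifolds permutes exceptional classes, yet the exceptional classes meet $F_1$ with degrees $(2,2)$ and $F_2$ with degrees $(3,1)$, so no diffeomorphism of pairs can exist. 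This use of the smooth invariance of exceptional classes is the key idea missing from your argument.

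Two further gaps. First, your surfaces are genus-$3$ \emph{multisections} tracked through the substitutions, but no such multisection exists in, or is constructed from, the factorization of Section~\ref{SCY} (there it is the fibers that have genus $3$), so the starting object of your construction is not actually in hand. Second, your deformation-equivalence claim for $\omega_1,\omega_2$ invokes ``uniqueness up to deformation of symplectic Calabi-Yau structures on $\K$ and its blow-ups,'' which is not a known theorem but an open problem; the paper instead perturbs the parent fibration so that the two lantern configurations become symplectic $(-4)$-sphere fiber components and applies McDuff--Symington \cite{MS}, which shows that Gompf's diffeomorphism of Proposition~\ref{Gompflemma} respects the deformation classes of the rationally blown-down symplectic forms. (Your topological step via a relative Freedman-type theorem is a plausible alternative to the paper's route through Wall's theorem, Freedman, and the topological isotopy result of \cite{Sunukjian}, but it too presupposes $[F_1]=[F_2]$ and the complement computations, none of which are grounded without the missing construction.)
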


\smallskip
We will finish with noting a further motivation for our study of multisections. The rather explicit description of a $4$-manifold obtained via monodromy of a Lefschetz fibration on it very often allows one to detect various configurations of symplectic surfaces in it; disjoint copies of fibers and sections, as well as matching pairs of Lefschetz vanishing cycles are a few examples of this sort. Coupled with the non-triviality of Seiberg-Witten invariants on symplectic $4$-manifolds, this has been the most essential source of producing new symplectic and smooth $4$-manifolds in the past few decades. (See for instance \cite{FS-survey} for an excellent survey of such construction methods.) A close look at these constructions shows that sections and multisections of such Lefschetz fibrations feature a key role. We therefore expect that the monodromy factorizations, which involve multisections to produce interesting configurations of surfaces (such as the ones we used in our rational-blowdowns in Sections~\ref{Nonisomorphic} and \ref{Exotic}), will be useful for building new symplectic and exotic $4$-manifolds. We plan to investigate this direction in future work.

\section{Preliminaries} \label{Preliminaries} 

In this article, we assume that all manifolds are compact, connected, smooth and oriented, and all the maps between them are smooth. 


\vspace{0.15cm}
\subsection{Lefschetz fibrations and multisections} \

Let $X$ and $\Sigma$ be compact manifolds (possibly with boundary) of dimensions $4$ and $2$, respectively. 

A smooth map $f: X\rightarrow \Sigma$ is a \emph{Lefschetz fibration} if $\Crit{f}$ is a discrete set in the interior of $X$ such that for any $p_i \in \Crit(f)$, we can take a complex coordinate $(U, \varphi)$ (resp. $(V, \psi)$) of $p_i$ (resp. $f(p_i)$) compatible with the orientation of $X$ (resp. of $\Sigma$) so that: 
\[
\psi \circ f \circ \varphi^{-1}(z_1, z_2) = z_1z_2. 
\] 
We furthermore assume that for each point $q_i \in C=f(\Crit(f))$, the \emph{singular fiber} $f^{-1}(q_i)$ contains exactly one critical point $p_i \in X$ of $f$. Any point $p_i \in \Crit{f}$ is called a \emph{Lefschetz singularity}, and for $g$ the genus of a regular fiber of $f$, $f: X\rightarrow \Sigma$ is called a \emph{genus-$g$ Lefschetz fibration}. Each critical point $p_i$ locally arises from shrinking a simple loop $c_i$ on $F$, called the \emph{vanishing cycle}. A singular fiber of a Lefschetz fibration is called \emph{reducible} (resp. \emph{irreducible}) if $c_i$ is separating (resp. nonseparating). In particular, if $c_i$ is null-homotopic in $F$, it gives rise to a \linebreak $(-1)$-sphere contained in the singular fiber, which can be blow-down preserving the rest of the fibration. We will always work with \textit{relatively minimal} Lefschetz fibrations, which do not contain any $(-1)$-spheres in the fibers. 

Given any fibration with only Lefschetz critical points, after a small perturbation one can always guarantee that there is at most one critical point on each fiber, as we built into our definition above. It shall be clear that $f$ restricts to a genus-$g$ surface bundle over $\Sigma \setminus C$. Lastly, an \emph{achiral Lefschetz fibration} is defined in the same way as above except that the local coordinate $(U,\varphi)$ is allowed to be incompatible with the orientation of $X$.  

Lefschetz fibrations arise naturally from pencils, where the domain $4$-manifold is closed and the target surface is $S^2$. A \emph{Lefschetz pencil} on a closed $4$-manifold $X$ is a Lefschetz fibration $f: X \setminus B \rightarrow S^2$, defined on the complement of a \textit{non-empty} discrete set $B$ in $X$, such that around any point $b_j \in B$, $f$ is locally modeled (again in a manner compatible with orientations) as $(z_1, z_2) \to z_1/ z_2$. Blowing-up all the points in $B$, one obtains an honest Lefschetz fibration $\tilde{f}: \tilde{X} \to S^2$ with $|D|$ distinct $(-1)$-sphere sections $S_j$, namely the exceptional spheres of the respective blow-ups. We will often use the short-hand notation $(X,f)$ for a Lefschetz fibration or pencil whenever $\Sigma = S^2$.

\begin{definition}\label{def_multisection}
Let $f:X\rightarrow \Sigma$ be a Lefschetz fibration and $S$ be an embedded surface in $X$. The surface $S$ is called a \emph{multisection} or \emph{$n$-section} of $f$ if it satisfies the following conditions:
\begin{enumerate}
\item $f|_{S}$ is an $n$-fold simple branched covering for some non-negative integer $n$; 
\item if a branched point $p \in S$ is not in $\Crit{f}$, the induced map \linebreak $df_p: N_pS \rightarrow T_{f(p)}\Sigma$ is orientation preserving isomorphism, where $N_pS$ is the fiber of the normal bundle of $S$ at $p$ which has the canonical orientation induced by that of $X$ and $S$; 
\item if a branched point $p\in S$ of $f|_{S}$ is in $\Crit(f)$, then there are complex coordinates $(U,\varphi)$ and $(V,\psi)$ as in the definition of a Lefschetz fibration above such that $\varphi(S\cap U)$ is equal to $\{(z, z) \in \mathbb{C}^2~|~z \in \mathbb{C}\}$. 
\end{enumerate}
\end{definition}

Clearly a $1$-section is an honest section of a Lefschetz fibration. Note that in both definitions we have given above, there is a \emph{positivity} imposed by requiring the compatibility with orientations in local complex models. In the language of \cite{Donaldson_Smith_2003} a multisection which is branched away from Lefschetz singularities is called a \emph{standard surface}. As it will become clear later, allowing our multisections to be branched at Lefschetz critical points as well (although subject to the local model given above), we will have a more flexible setting which makes is possible to deal with larger families of examples of Lefschetz fibrations with multisections of geometric significance. Lastly, as in the case of achiral Lefschetz fibrations, one can possibly work more generally with non necessarily positive multisections by allowing the local models to be incompatible with the orientations.

\vspace{0.15cm}
\subsection{Mapping class groups}  \

As it will become crucial in capturing the local topology of multisections (namely the self-intersections of them in the ambient $4$-manifold), we are going to set up mapping class groups relevant to our purposes in a framed fashion. 

Let $\Sigma$ be a compact, oriented and connected surface. In this paper, we regard $\Sigma$ as the zero-section of the tangent bundle $T\Sigma$. Take subsets $U_i, P \subset T\Sigma$. We define a group $\Mod_{P}{(\Sigma; U_1,\ldots,  U_n)}$ as follows: 
\[
	\Mod_{P}{(\Sigma; U_1,\ldots, U_n)} = \pi_0(\Diff^+_{P}{(\Sigma; U_1,\ldots, U_n)}), 
\]
where $\Diff^+_{P}{(\Sigma; U_1,\ldots, U_n)}$ is defined as follows: 
\[
	\Diff^+_{P}{(\Sigma; U_1,\ldots, U_n)}= \\
	\left\{T\in \Diff^+(\Sigma)~|~dT|_{P}= \id|_{P},  dT(U_i) = U_i ~\text{for all} i\right\}.
\]
Here we denote by $\Diff^+(\Sigma)$ the group of orientation preserving self-diffeomorphisms of $\Sigma$.  
For a simpler notation, we also define the groups $\Diff^+(\Sigma; U_1,\ldots, U_n)$ and \linebreak $\Mod(\Sigma; U_1, \ldots, U_n)$ as: 
{\allowdisplaybreaks
\begin{align*}
\Diff^+(\Sigma; U_1,\ldots, U_n) & = \Diff^+_{\emptyset}(\Sigma; U_1,\ldots, U_n), \\
\Mod(\Sigma; U_1, \ldots, U_n) & = \Mod_{\emptyset}(\Sigma; U_1, \ldots, U_n). 
\end{align*}
}
\noindent The group structures on all of the above are defined via compositions as maps, i.e. for $T_1,T_2\in\Diff^+_{P}(\Sigma; ,\ldots, U_n)$, we $T_1\cdot T_2 = T_1\circ T_2$, etc.

\vspace{0.15cm}
\subsection{Monodromy factorizations}  \

Let $h:X\rightarrow D^2$ be a genus-$g$ Lefschetz fibration and $C=\{p_1,\ldots, p_l\}\subset D^2$ the set of critical values of $h$. We take a regular value $q_0\in \Int(D^2)$ and an identification $\Sigma_g\cong h^{-1}(q_0)$. 
For each $i$ we also take a path $\gamma_i$ in $\Int(D^2)$ connecting $q_0$ with $q_i$ so that all $\gamma_i$'s are pairwise disjoint except at $q_0$. We give indices of these paths so that $\gamma_1,\ldots, \gamma_l$ appear in this order when we travel around $q_0$ counterclockwise. Let $a_i: S^1\rightarrow D^2\setminus C$ be a loop obtained by connecting a small circle around $q_i$ oriented counterclockwise using $\gamma_i$. The pullback $a_i^\ast h$ is a $\Sigma_g$-bundle over $S^1$ and we can obtain a self-diffeomorphism by taking a parallel transport of a flow in the total space of $a_i^\ast h$ transverse to each fiber.
  
Although a diffeomorphism depends on a choice of a flow, its isotopy class is uniquely determined from the $\Sigma_g$-bundle structure. The isotopy class is called a \emph{monodromy} of the bundle $a_i^\ast h$. 
Kas~\cite{Kas_1980} proved that the monodromy of $a_i^\ast h$ is the right-handed Dehn twist along some simple closed curve $c_i\subset \Sigma_g$, which is called a \emph{vanishing cycle} of the Lefschetz singularity $p_i$ in $h^{-1}(q_i)$. Let $a$ be a loop obtained by connecting $a_1,\ldots,a_l$ in this order. It is easy to verify that $a$ is homotopic to the boundary $\partial D^2$ in $D^2\setminus C$. The product $t_{c_l}\cdot \cdots \cdot t_{c_1}$ is the monodromy of the bundle $a^\ast h$. For a genus-$g$ Lefschetz fibration $f: X\rightarrow S^2$ over $S^2$, we take a disk $D\subset S^2$ so that $D$ contains all the critical values of $f$. The restriction $f|_{f^{-1}(D)}$ is a Lefschetz fibration over the disk. Since the monodromy of $f|_{f^{-1}(\partial D)}$ is trivial, we can obtain the following factorization of the unit element of the mapping class group $\Mod(\Sigma_g)$: 
\[
t_{c_l} \cdot \cdots \cdot t_{c_1} = 1,
\] 
where $c_i \subset \Sigma_g$ is a vanishing cycle of a Lefschetz singularity of $f$. We call this factorization a \emph{monodromy factorization} associated with $f$. 

In the case of a Lefschetz pencil, recall that blowing-up each base point $b_j$ yields to a $(-1)$-sphere section $S_j$.
The section $S_j$ provides a lift of the monodromy representation $\pi_1 (\Sigma \setminus f(C)) \to \Mod(\Sigma_g)$ to the mapping class group $\Mod_{x_j}(\Sigma_g)$, where $x_j$ is a marked point on $\Sigma_g$. One can then fix a disk neighborhood of this section preserved under the monodromy, and get a lift of the factorization to $\Mod_{\partial \Sigma} (\Sigma_g^1)$, which equals to a power of the boundary parallel Dehn twist. Doing this for each $b_j$ we get a defining word
\[
t_{c_l} \cdot \cdots \cdot t_{c_1} = t_{\delta_1} \cdot \cdots \cdot t_{\delta_m},
\]
in $\Mod_{\partial \Sigma} (\Sigma_g^m)$, where $m = |B|$, the number of base points, and $\delta_j$ are boundary parallel along distinct boundary components of $\Sigma_g^m$. The powers of the $t_{\delta_j}$ are determined by the self-intersection number $-1$ of the corresponding exceptional section.

\vspace{0.15cm}
\subsection{Symplectic $4$-manifolds and Kodaira dimension}  \

By the ground-breaking work of Donaldson every symplectic $4$-manifold $(X, \omega)$ admits a \emph{symplectic} Lefschetz pencil whose fibers are symplectic with respect to $\omega$ \cite{Donaldson}. Conversely, building on a construction of Thurston, Gompf showed that the total space of a Lefschetz fibration with a homologically essential fiber, and in particular blow-up of any pencil, always admits a \textit{compatible} symplectic form $\omega$, for which the fibers are symplectic. This holds whenever the fiber genus is at least $2$, or there are critical points. In this case $\omega$ can be chosen so that not only the fibers but also any chosen collection of disjoint sections are symplectic, and moreover, any such two symplectic forms are deformation equivalent \cite{GS}. We will use the notation $(X, \omega, f)$ to indicate that $f$ is a symplectic Lefschetz pencil/fibration with respect to $\omega$, where any explicitly discussed sections of $f$ will always be assumed to be symplectic with respect to it.  

The Kodaira dimension for projective surfaces can be extended to symplectic $4$-manifolds. Recall that a symplectic $4$-manifold $(X, \omega)$ is called \emph{minimal} if it does not contain any embedded symplectic sphere of square $-1$, and that it can always be blown-down to a minimal symplectic $4$-manifold $(X_{\text{min}}, \omega')$. Let $\kappa_{X_{\text{min}}}$ be the canonical class of $(X_{\text{min}}, \omega_{\text{min}})$. We can now define the \textit{symplectic Kodaira dimension} of $(X, \omega)$, denoted by $\kappa=\kappa(X,\omega)$ as 
\[
\kappa(X,\omega)=\left\{\begin{array}{rl}-\infty& \mbox{if
}\kappa_{X_{\text{min}}}\cdot[\omega_{\text{min}}]<0 \mbox{ or } \kappa_{X_{\text{min}}}^{2}<0 \\
0 & \mbox{if } \kappa_{X_{\text{min}}}\cdot[\omega_{\text{min}}]=\kappa_{X_{\text{min}}}^{2}=0\\ 1 &
\mbox{if }\kappa_{X_{\text{min}}}\cdot[\omega_{\text{min}}]>0\mbox{ and
}\kappa_{X_{\text{min}}}^{2}=0\\2& \mbox{if }\kappa_{X_{\text{min}}}\cdot[\omega_{\text{min}}]>0\mbox{
and }\kappa_{X_{\text{min}}}^{2}>0\end{array}\right.
\]
Importantly, $\kappa$ is independent of the minimal model $(X_{\text{min}}, \omega_{\text{min}})$ and is a smooth invariant of the $4$-manifold $X$ \cite{Li3}.

\vspace{0.2in}
\section{Multisections via mapping class groups} \label{MCG} 

In this section we explain how to capture multisections of Lefschetz fibrations and self-intersections of them in terms of mapping class groups. 

\vspace{0.15cm}
\subsection{A preliminary lemma}\label{subsec_lemmainvolution} \


Let $\Sigma$ be an oriented surface, $\eta\colon \Sigma\to \Sigma$ an orientation preserving involution, $S\subset \Sigma$ a union of components of $\partial\Sigma$ invariant under $\eta$ and $T \subset \Sigma\setminus V$ a finite set invariant under $\eta$. 
We denote the fixed points set of $\eta$ by $V\subset \Sigma$. 
We define a subgroup $C_S(\Sigma,T;\eta)$ of $\Diff^+_S(\Sigma; V, T)$ as follows: 
\[
C_S(\Sigma, T;\eta) = \left\{\left.\varphi \in \Diff^+_S(\Sigma; V,T)~\right|~\varphi \circ \eta = \eta \circ \varphi\right\}. 
\]
We denote the set $C_S(\Sigma, \emptyset; \eta)$ by $C_S(\Sigma; \eta)$. 
The following lemma will be of key use to us for producing several mapping class relations as well as for proving the main theorem:

\begin{lemma}\label{lem_involutionMCG}

The kernel of the natural map 
\[
\eta_\ast \colon \pi_0(C_S(\Sigma, T;\eta))\to \Mod_{S/\eta}(\Sigma/\eta;V/\eta, T/\eta)
\]
induced by the quotient map $/\eta\colon \Sigma \to \Sigma/\eta$ is generated by the class $[\eta]$ if $C_S(\Sigma, T;\eta)$ contains $\eta$ and is trivial otherwise. 

\end{lemma}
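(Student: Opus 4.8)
The plan is to prove this by an equivariant isotopy-lifting argument in the spirit of Birman--Hilden theory, exploiting that the quotient map $p\colon \Sigma\to\Sigma/\eta$ is a double branched covering with deck group $\{\id,\eta\}$, branched over the fixed-point set $V$. Since $\eta$ is orientation preserving, a Bochner-linearization argument rules out isolated boundary fixed points, so $V$ is a finite set of interior points and $p$ is an honest connected double cover away from $V$ (in particular unbranched near $S$). One inclusion is immediate: if $\eta\in C_S(\Sigma,T;\eta)$, then $\eta$ descends to the identity on $\Sigma/\eta$, so $\eta_\ast[\eta]=1$ and hence $\langle[\eta]\rangle\subseteq\Ker\eta_\ast$.

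For the reverse inclusion, suppose $[\varphi]\in\Ker\eta_\ast$, let $\bar\varphi$ be the diffeomorphism of $\Sigma/\eta$ that $\varphi$ descends to, and choose an isotopy $\{\bar g_t\}_{t\in[0,1]}$ in $\Diff^+_{S/\eta}(\Sigma/\eta;V/\eta,T/\eta)$ from $\bar g_0=\bar\varphi$ to $\bar g_1=\id$. I would first observe that, because $V/\eta$ and $T/\eta$ are finite and each $\bar g_t$ preserves them setwise, the induced permutations $t\mapsto\sigma_t\in\operatorname{Sym}(V/\eta)\times\operatorname{Sym}(T/\eta)$ form a continuous map into a discrete set, hence are constant and equal to their value at $t=1$, namely the identity. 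Thus every $\bar g_t$ fixes $V/\eta$ and $T/\eta$ pointwise. Consequently the time-dependent vector field $\bar X_t$ generating the isotopy, defined by $\partial_t\bar g_t=\bar X_t\circ\bar g_t$, vanishes along $V/\eta\cup T/\eta$, and since $\bar g_t|_{S/\eta}=\id$ with $d\bar g_t|_{S/\eta}=\id$ for all $t$, it vanishes to first order along $S/\eta$.

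Next I would lift $\bar X_t$ to the $\eta$-invariant field $X_t$ on $\Sigma$ (characterized by $X_t\circ\eta=d\eta\circ X_t$): over $\Sigma\setminus V$ this is the unique lift covering $\bar X_t$ through the local diffeomorphism $p$, and it extends smoothly across each branch point precisely because $\bar X_t$ vanishes there. Integrating $X_t$ from $g_0=\varphi$ yields an isotopy $\{g_t\}$ with $p\circ g_t=\bar g_t\circ p$ (by uniqueness of solutions downstairs) and with each $g_t$ equivariant (uniqueness again, since $\eta\circ g_t\circ\eta$ solves the same equation from the same initial value $\eta\varphi\eta=\varphi$). The vanishing properties transported from $\bar X_t$ guarantee that $g_t$ fixes $V$ and $T$ and restricts to the identity to first order along $S$, so $\{g_t\}$ stays inside $C_S(\Sigma,T;\eta)$. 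Finally $g_1$ covers $\id$, so $g_1(x)\in\{x,\eta(x)\}$ for every $x$; connectedness of $\Sigma\setminus V$ forces $g_1=\id$ or $g_1=\eta$, whence $[\varphi]=[g_1]\in\{1,[\eta]\}$. If $\eta\notin C_S(\Sigma,T;\eta)$, the endpoint $g_1\in C_S(\Sigma,T;\eta)$ cannot equal $\eta$, so the kernel is trivial, as claimed.

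The main obstacle, and the only place where the branched nature of $p$ genuinely intervenes, is the smooth lift of $\bar X_t$ across the fixed points $V$. I would settle it in the local model $\eta(z)=-z$, $p(z)=z^2$: a field $\beta(w)\,\partial_w$ downstairs with $\beta(0)=0$ lifts to $\tfrac{1}{2}z^{-1}\beta(z^2)\,\partial_z$ upstairs, which is smooth and odd (hence $\eta$-invariant) exactly because $\beta$ vanishes at the branch point. Verifying this extension, and separately confirming that first-order vanishing of $\bar X_t$ along $S/\eta$ really does propagate to $dg_t|_S=\id$, are the two technical checks requiring care; the remainder of the argument is formal.
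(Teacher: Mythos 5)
Your overall strategy --- direct equivariant isotopy lifting through the branched double cover by integrating a lifted time-dependent vector field --- is genuinely different from the paper's, which first proves the lemma when $\eta$ is fixed-point free (so $/\eta$ is an honest covering and isotopies lift directly), and then reduces the general case to that one by excising disk neighborhoods $\nu V$ of the fixed points and analyzing the capping homomorphisms $C_1,C_2$, whose kernels are generated by Dehn twists parallel to $\partial \nu V$. Unfortunately, your version has a genuine gap at exactly the point you flagged as ``the only place where the branched nature of $p$ genuinely intervenes'': the local lifting claim is false. An isotopy-generating vector field downstairs is smooth but not holomorphic; in complex notation its Taylor expansion at a branch point is $\bar X(w) = Aw + B\bar w + O(|w|^2)$, and vanishing at $0$ only kills the constant term. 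The lift through $p(z)=z^2$ is $X(z) = \bar X(z^2)/(2z)$, whose anti-linear part contributes $B\bar z^2/(2z)$; in polar coordinates this equals $\tfrac{B}{2}\,r e^{-3i\theta}$, which is homogeneous of degree one but not real-linear in $z$, hence not differentiable at $0$. Your formula $\tfrac12 z^{-1}\beta(z^2)\partial_z$ is smooth only when $\beta$ is holomorphic (or when the relevant anti-holomorphic jets of $\bar X_t$ vanish at the branch point), and nothing in the setup provides that.

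The consequence is not cosmetic: the lifted field $X_t$ is merely Lipschitz at points of $V$, so its flow $g_t$ exists and is unique but a priori consists of bi-Lipschitz homeomorphisms rather than diffeomorphisms. The path $t\mapsto g_t$ therefore need not stay in $C_S(\Sigma,T;\eta)$, which is a group of \emph{diffeomorphisms}, and you cannot conclude $[\varphi]=[g_1]$ in $\pi_0(C_S(\Sigma,T;\eta))$. Repairing this requires first normalizing the downstairs isotopy near $V/\eta$ (making it conformal, or making it fix small disks around the branch points for all time), and accounting for the cost of that normalization is precisely what the paper's second step does: the capping map $C_2$ from the holed surface to the marked surface is surjective with kernel generated by the boundary-parallel Dehn twists, which in turn die under $C_1$. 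So the excision-plus-capping argument is not a stylistic alternative to your approach; it is the mechanism that handles exactly this failure of smooth isotopy lifting at branch points (the same subtlety that drives Birman--Hilden theory). The remaining ingredients of your proposal --- the permutation argument giving pointwise fixing of $V/\eta\cup T/\eta$, uniqueness and equivariance of lifts away from $V$, and the endpoint dichotomy $g_1\in\{\id,\eta\}$ from connectedness --- are all sound, and away from $V$ they parallel the paper's unbranched-covering case.
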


\begin{proof}
We first prove the statement under the assumption that $\eta$ does not have fixed points. 
Let $[\varphi]$ be a mapping class in $\Ker(\eta_\ast)$. 
We denote the self-diffeomorphism of $\Sigma/\eta$ induced by $\varphi$ by $\overline{\varphi}$. 
There exists an isotopy $H_t\colon \Sigma/\eta \to \Sigma/\eta$ such that $H_0=\overline{\varphi}$ and $H_1=\id_{\Sigma/\eta}$.
Since $/\eta\colon \Sigma \to \Sigma/\eta$ is an unbranched covering, there exists a lift $\tilde{H}_t\colon \Sigma \to \Sigma$ of $H_t$ under $/\eta$ such that $\tilde{H}_0$ is equal to $\varphi$. 
By the uniqueness of a lift under a covering map, the restrictions $\tilde{H}_t|_{S}$ and $\tilde{H}_t|_{T}$ are the identity maps for any $t$. 
It is easy to verify that the composition $/\eta \circ \tilde{H}_t\circ \eta$ is equal to $/\eta \circ \eta \circ \tilde{H}_t$. 
Since $\tilde{H}_0 =\varphi$ and $\varphi$ commutes with $\eta$, the composition $\tilde{H}_t\circ \eta$ is equal to $\eta \circ \tilde{H}_t$. 
Thus the map $t\mapsto \tilde{H}_t$ gives a path in $C_S(\Sigma,T;\eta)$. 
The map $\tilde{H}_1$ is equal to either $\id_\Sigma$ or $\eta$ since $/\eta \circ \tilde{H}_1$ is equal to $\id_{\Sigma/\eta}$. 
Hence $[\varphi]\in \pi_0(C_S(\Sigma, T;\eta))$ is represented by either the identity map or $\eta$, which completes the proof of the statement. 

We next consider an involution $\eta$ with fixed points.
Let $\nu V\subset \Sigma$ be a neighborhood of $V$ consisting of a disjoint union of disks. 
The following diagram commutes: 
\[
\begin{CD}
\pi_0(C_{S\cup \partial (\nu V)}((\Sigma\setminus \Int(\nu V)), T; \eta)) & @> \eta_\ast >>  \Mod_{S\cup \partial(\nu V)/\eta}(\Sigma\setminus \Int(\nu V)/\eta; T/\eta) \\
@V C_1 VV & @VV C_2 V \\
\pi_0(C_S(\Sigma, T;\eta)) & @> \eta_\ast >> \Mod_{S/\eta}(\Sigma/\eta;V/\eta,T/\eta), 
\end{CD}
\]
where the vertical maps are the capping maps. 
The map $C_2$ is surjective and the kernel of $C_2$ is generated by the Dehn twists along simple closed curves parallel to components of $\partial \nu V$. 
We can regard these Dehn twists are contained in $\pi_0(C_{S\cup \partial (\nu V)}(\Sigma\setminus \Int(\nu V), T; \eta))$. 
It is easy to verify that the map $C_1$ is surjective. 
Furthermore any Dehn twist along a curve parallel to a component of $\partial \nu V$ is contained in the kernel of $C_1$. 
The kernel of the induced map $\eta_\ast$ defined on $\pi_0(C_{S\cup \partial (\nu V)}(\Sigma\setminus \Int(\nu V),T; \eta))$ is generated by $[\eta]$ if $C_{S\cup \partial (\nu V)}(\Sigma\setminus \Int(\nu V),T; \eta)$ contains $\eta$ and is trivial otherwise since the quotient map $/\eta$ on $\Sigma\setminus \Int(\nu V)$ is an unbranched covering. 
Thus the kernel of the map 
\[\eta_\ast\colon \pi_0(C_S(\Sigma,T;\eta)) \to \allowbreak \Mod_{S/\eta}(\Sigma/\eta;V/\eta, T/\eta) \]
is generated by $[\eta]$ if $C_S(\Sigma, T;\eta)$ contains $\eta$ and is trivial otherwise. 
This completes the proof of Lemma~\ref{lem_involutionMCG}. 
\end{proof}

\vspace{0.15cm}
\subsection{Local model for the fibration around a regular branched point} \

Let $f_0: \mathbb{C}^2\rightarrow \mathbb{C}$ be the projection onto the first component. We take a subset $S_0\subset \mathbb{C}^2$ as follows: 
\[
S_{0} = \{(z^2, z)\in \mathbb{C}^2 \hspace{.3em} | \hspace{.3em} z\in\mathbb{C} \}. 
\]
The restriction $f_0|_{S_0}$ is a double branched covering branched at the origin. 

\begin{lemma}\label{lem_localmodel_branchedpoint1}

Let $f:X\rightarrow \Sigma$ be a Lefschetz fibration, $S\subset X$ a multisection of $f$ and $p\in S\setminus \Crit(f)$ a branched point of $f|_{S}$. Then, there exist a local coordinate $\Phi: U\rightarrow \mathbb{C}^2$ of $p$ and a local coordinate $\varphi: V\rightarrow \mathbb{C}$ of $q=f(p)$ which make the following diagram commute: 
\[
\begin{CD}
(U, U\cap S) @>\Phi >> (\mathbb{C}^2, S_0) \\
@V f VV @VV f_0 V \\
V @> \varphi >> \mathbb{C}. 
\end{CD}
\]
That is, $f_0|_{S_0}$ conforms to a local model of a branched covering map with a simple branched point at $p$. 
\end{lemma}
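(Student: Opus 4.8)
The plan is to establish the normal form in three moves: straighten the fibration $f$ near $p$, pin down the tangent plane $T_pS$, and then produce a \emph{fibered} change of coordinates carrying $S$ onto the model $S_0$. First, since $p\notin\Crit(f)$, the map $f$ is a submersion at $p$, so by the local normal form for submersions I would choose a chart $\Phi_1\colon U\to\mathbb{C}^2$ about $p$ and a chart $\varphi_1\colon V\to\mathbb{C}$ about $q=f(p)$, both compatible with the given orientations, in which $f$ becomes the projection $f_0(z_1,z_2)=z_1$ (so $f_0\circ\Phi_1=\varphi_1\circ f$), and in which $\Ker df_p$ is the $z_2$-axis. It then remains only to adjust these coordinates so that in addition the image of $U\cap S$ is $S_0$.

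Next I would determine $T_pS$. Because $p$ is a branch point of $f|_S$, the differential $d(f|_S)_p$ vanishes, so the composite $T_pS\hookrightarrow T_pX\xrightarrow{df_p}T_q\Sigma$ is zero and hence $T_pS\subseteq\Ker df_p$; as both are real $2$-dimensional, $T_pS=\Ker df_p$ is exactly the $z_2$-axis, i.e.\ $S$ is tangent to the fiber direction. In particular the second coordinate $z_2$ restricts to a local diffeomorphism along $S$ near $p$. Moreover $df_p$ descends to an isomorphism $N_pS\to T_q\Sigma$, and condition (2) of Definition~\ref{def_multisection} says this isomorphism is orientation preserving; this positivity is precisely what will force the holomorphic model $u\mapsto u^2$ below, rather than an orientation-reversed variant such as $u\mapsto\bar{u}^2$.

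Invoking now the structure of a simple, positively branched point: by the standard local form of an orientation-preserving simple branched covering between oriented surfaces, together with the orientation compatibility just noted, there are a coordinate $u$ on $S$ near $p$ and a base coordinate $\zeta=g(z_1)$ near $q$ (with $g$ a one-variable local diffeomorphism) in which $f|_S$ reads $u\mapsto u^2$. Writing the corresponding parametrization of $S$ in the step-one coordinates as $\iota(u)=(z_1(u),z_2(u))$, we have $g(z_1(u))=u^2$, while $u\mapsto z_2(u)$ is the local diffeomorphism found above. I would then define the fibered map
\[
\Psi(z_1,z_2)=\bigl(g(z_1),\,\psi(z_2)\bigr),\qquad \psi:=(z_2\circ\iota)^{-1},
\]
which satisfies $f_0\circ\Psi=g\circ f_0$, so it preserves the projection structure and merely reparametrizes the base by $g$. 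Its differential at the origin is the direct sum of the isomorphisms $dg$ and $d\psi$, so $\Psi$ is a local diffeomorphism; and along $S$ one computes $\Psi(\iota(u))=(g(z_1(u)),\psi(z_2(u)))=(u^2,u)$, whence $\Psi$ sends $S$ onto $S_0$. Setting $\Phi:=\Psi\circ\Phi_1$ and $\varphi:=g\circ\varphi_1$ (shrinking $U,V$ as needed) gives $f_0\circ\Phi=\varphi\circ f$ and $\Phi(U\cap S)=S_0$, which is the asserted commuting diagram.

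I expect the main obstacle to be the third step taken as a whole: arranging a \emph{single} coordinate change that simultaneously descends to the base, so the diagram commutes with $f_0$ the honest projection, \emph{and} straightens $S$ to $S_0$. The fibered form $\Psi(z_1,z_2)=(g(z_1),\psi(z_2))$ is exactly what reconciles these two demands, and the orientation hypothesis (2) is what guarantees we may land on the holomorphic branched model $u\mapsto u^2$; the remaining verifications (that $\Psi$ is a diffeomorphism and that the parametrization $\iota$ has the claimed properties) are routine once the tangency $T_pS=\Ker df_p$ is in hand.
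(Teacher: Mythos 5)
Your proof is correct and follows essentially the same route as the paper's: straighten $f$ to the projection, normalize the first component of the parametrization of $S$ by the one-variable branched-covering normal form, note that the second component is a local diffeomorphism, and assemble a fibered coordinate change --- your single map $\Psi = g\times\psi$ is exactly the paper's composition $(\id\times\widetilde{\varphi_1}\circ\widetilde{\varphi_2}^{-1})\circ(\id\times\varphi_2)\circ(\varphi_1\times\id)$. The only cosmetic difference is that you justify that $z_2|_S$ is a local diffeomorphism via the tangency $T_pS=\Ker df_p$, whereas the paper deduces it from the embeddedness of $S$; both rest on the same fact that $d(f|_S)_p=0$ at a branch point.
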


\begin{proof}

Since $p$ is not a critical point of $f$, there exist local coordinates $\Phi_0: U\rightarrow \mathbb{C}^2$ and $\varphi_0: V\rightarrow \mathbb{C}$ of $p$ and $f(p)$, respectively, such that $p$ is mapped to the origin of $\mathbb{C}^2$, and that the following diagram commutes: 
\[
\begin{CD}
U @>\Phi_0 >> \mathbb{C}^2 \\
@V f VV @VV f_0 V \\
V @> \varphi_0 >> \mathbb{C}. 
\end{CD}
\]
Without loss of generality, we can assume that the neighborhood $U$ does not contain any branched points of $f|_{S}$ except $p$. 
Then, the intersection $U\cap S$ is diffeomorphic to $\mathbb{C}$, and $\Phi_0(S)$ is described as follows: 
\[
\Phi_0(S) = \{(s_1(z), s_2(z))\in \mathbb{C}^2 \hspace{.3em} | \hspace{.3em} z\in \mathbb{C} \},  
\] 
where $s_i : \mathbb{C}\rightarrow \mathbb{C}$ is a smooth function ($i=1,2$). 

Since $p$ is a branched point of $f|_{S}$, the map $s_1$ is a double branched covering branched at the origin. Thus, there exist diffeomorphisms $\widetilde{\varphi_1}: \mathbb{C}\rightarrow \mathbb{C}$ and \linebreak $\varphi_1:\mathbb{C}\rightarrow \mathbb{C}$ which make the following diagram commute: 
\[
\begin{CD}
\mathbb{C} @>\widetilde{\varphi_1} >> \mathbb{C} \\
@V s_1  VV @VV (\cdot )^2 V \\
\mathbb{C} @> \varphi_1 >> \mathbb{C}. 
\end{CD}
\]

Now, as $S$ is an embedded surface in $X$, we can assume that the map $z \mapsto (s_1(z), s_2(z))$ is an embedding. 
In particular, $s_2$ is locally diffeomorphic at the origin of $\mathbb{C}$. Thus, by replacing the local coordinates with sufficiently small ones if necessary, we can take diffeomorphisms $\widetilde{\varphi_2}: \mathbb{C}\rightarrow \mathbb{C}$ and $\varphi_2: \mathbb{C}\rightarrow \mathbb{C}$ which make the following diagram commute: 
\[
\begin{CD}
\mathbb{C} @>\widetilde{\varphi_2} >> \mathbb{C} \\
@V s_2  VV @VV \id V \\
\mathbb{C} @> \varphi_2 >> \mathbb{C}. 
\end{CD}
\]
We put $\Phi_1 = \varphi_1 \times \id$ and $\Phi_2 = \id \times \varphi_2$. Now, the following diagram commutes: 
\[
\begin{CD}
U @> \Phi_0 >> \mathbb{C}^2 @>\Phi_1 >> \mathbb{C}^2 @> \Phi_2 >> \mathbb{C}^2 \\
@V s  VV @V f_0 VV @V f_0 VV @V f_0 VV \\
V @> \varphi_0 >> \mathbb{C} @> \varphi_1 >> \mathbb{C} @> \id >> \mathbb{C}. 
\end{CD}
\]
The following equality can be checked easily:
\[
\Phi_2\circ \Phi_1 \circ \Phi_0 (S\cap U) = \{(z^2,  \widetilde{\varphi_2} \circ \widetilde{\varphi_1}^{-1}( z ))\in \mathbb{C}^2~|~z\in \mathbb{C} \}. 
\]
Thus, the diffeomorphisms $\Phi = (\id \times \widetilde{\varphi_1} \circ \widetilde{\varphi_2}^{-1}) \circ \Phi_2\circ \Phi_1 \circ \Phi_0$ and $\varphi = \varphi_1 \circ \varphi_0$ satisfy the desired condition. 
This completes the proof of Lemma \ref{lem_localmodel_branchedpoint1}. 
\end{proof}

Hence we can always make a local coordinate $\varphi$ in Lemma \ref{lem_localmodel_branchedpoint1} compatible with the orientation of $\Sigma$. The branched point $p\in S\setminus \Crit(f)$ of $f|_{S}$ is positive if and only if a local coordinate $\Phi$ of $p$ obtained in Lemma \ref{lem_localmodel_branchedpoint1} is compatible with the orientation of $X$ after making $\varphi$ compatible with the orientation of $\Sigma$. 

\vspace{0.15cm}
\subsection{Standard monodromy factorization around a regular branched point} \

We are now going to study the monodromy factorization around a branched point of a multisection, which will play a key role in the proof of Theorem~\ref{mainthm} below.

We denote by $\Sigma_{g}^n$ an oriented, connected and compact surface of genus $g$ with $n$ boundary components. 
Let $S_0\subset \mathbb{C}^2$ be a standard model of a branched point away from Lefschetz singularities as explained in the previous subsection. 
We denote the subset $\{z\in \mathbb{C} \hspace{.3em} | \hspace{.3em} |z|\leq k \}$ by $B_k$. 
We consider the restriction 
\[ q = p|_{B_1\times B_2}: B_1\times B_2 \rightarrow B_1 \, . \]
The subset $S_0\cap (B_1\times B_2)$ is a bisection of $q$. This bisection, together with an identification $B_2 \cong \Sigma_{0}^{1}$, makes the monodromy $\varrho_0$ of $q|_{q^{-1}(\partial B_1)}$ be contained in the group $\Mod_{\partial\Sigma_0^1}(\Sigma_{0}^{1}; \{s_1,s_2\})$ where $s_1, s_2$ are two points in $q^{-1}(1)\cap S_0$. 
It is known that this monodromy is equal to the positive half twist along an arc between $s_1$ and $s_2$. 
Let $\varepsilon \in \mathbb{R}$ be a sufficiently small real number and we put $e= 1- \varepsilon$. 
We take another subset $S_0^\prime \subset\mathbb{C}^2$ as follows: 
\[
S_0^\prime = \{(z^2, ez)\in \mathbb{C}^2 \hspace{.3em} | \hspace{.3em} z\in\mathbb{C} \}. 
\]
The subset $S_0^\prime \cap (B_1\times B_2)$ is also a bisection of $q$. By using the bisections, we can take a lift $\widetilde{\varrho_{0}}\in \Mod_{\partial \Sigma_{0}^{1}}(\Sigma_{0}^{3}; \{u_1,u_2\})$ of the monodromy $\varrho_0$, where $u_1, u_2$ are points in $\partial \Sigma_{0}^{3}\setminus \partial \Sigma_{0}^{1}$ which cover the set $\pi_0(\partial \Sigma_{0}^{3}\setminus \partial \Sigma_{0}^{1})$.  
Note that the group $\Mod_{\partial \Sigma_{0}^{1}}(\Sigma_{0}^{3}; \{u_1,u_2\})$ is isomorphic to the group $\Mod_{\partial\Sigma_0^1}(\Sigma_{0}^{1}; \{v_1,v_2\})$, where $v_i$ is a non-zero tangent vector in $T_{s_i}\Sigma_{0}^{1}$.

\begin{lemma}\label{lem_lift_monodromy1}
The mapping class $\widetilde{\varrho_{0}}$ is represented by the map described in the Figure~\ref{lift_monodromy1}. 
\begin{figure}[htbp]
\begin{center}
\includegraphics[width=65mm]{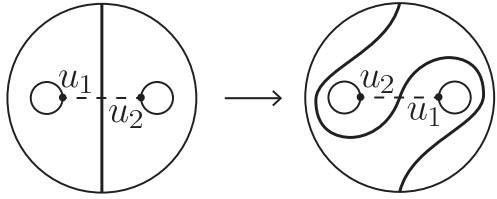}
\end{center}
\caption{The element $\widetilde{\varrho_{0}}$ interchanges the points $u_1, u_2$, and keeps the dotted arc between $u_1$ and $u_2$. }
\label{lift_monodromy1}
\end{figure}
\end{lemma}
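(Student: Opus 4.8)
The plan is to produce one explicit representative of the lift $\widetilde{\varrho_0}$ by an honest parallel transport that keeps track of the push-off data, and then recognize it as the framed half twist of Figure~\ref{lift_monodromy1}. First I would parametrize $\partial B_1$ by $t\mapsto e^{2\pi i t}$, $t\in[0,1]$, and locate, inside the fiber disk $B_2$ over $e^{2\pi i t}$, both the branch points coming from $S_0$ and the push-off points coming from $S_0^\prime$. Solving $z^2=e^{2\pi i t}$ gives $z=\pm e^{\pi i t}$, so the $S_0$-points sit at $\pm e^{\pi i t}$ (radius $1$) while the matching $S_0^\prime$-points sit at $\pm e\, e^{\pi i t}$ (radius $e=1-\varepsilon$). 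The crucial observation is that at every time $t$ each branch point and its nearby push-off point differ only in radius, not in angle; hence the framed pair at each $s_i$ — equivalently, the branch point together with the tangent vector $v_i$ recorded by $S_0^\prime$ — is carried rigidly by the rotation $z\mapsto e^{\pi i t}z$.

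Building on this, I would construct the horizontal lift explicitly. Choosing the flow on $B_1\times B_2$ whose time-$t$ fiber map is $\Psi_t(z)=e^{\pi i t}z$ on a disk containing all the branch and push-off points, tapered smoothly to $\id$ near $\partial B_2=\partial\Sigma_0^1$, yields a diffeomorphism transporting the fiber-$0$ framed configuration to the fiber-$t$ framed configuration, precisely because of the rigid-rotation observation above. Thus $\Psi_t$ legitimately represents the parallel transport respecting both bisections $S_0$ and $S_0^\prime$, and the lifted monodromy is its time-$1$ map $\widetilde{\varrho_0}=\Psi_1$. Since $\Psi_1(z)=-z$ on the central disk and is the identity near the boundary, $\Psi_1$ is a rotation by $\pi$ tapering to the identity, i.e. a positive half twist; it interchanges $s_1=1$ with $s_2=-1$ carrying the framings along, so after the identification $\Mod_{\partial\Sigma_0^1}(\Sigma_0^3;\{u_1,u_2\})\cong\Mod_{\partial\Sigma_0^1}(\Sigma_0^1;\{v_1,v_2\})$ it interchanges $u_1$ and $u_2$ and preserves, as a set, the diameter joining them — exactly the dotted arc of Figure~\ref{lift_monodromy1}.

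Finally I would attend to the bookkeeping that distinguishes this framed statement from its unframed counterpart, which is where I expect the only real obstacle to lie. The main point to verify is that tapering the rotation to the identity across the annular region between the central disk and $\partial B_2$ introduces no extra boundary Dehn twist in the framed mapping class group: since that region carries no branch or push-off data, any two such tapers are isotopic rel the marked set and rel $\partial\Sigma_0^1$, so $[\Psi_1]$ is well defined and equal to the half twist. The genuinely delicate part is the orientation convention — one must confirm that the direction of $v_i$ read off from $S_0^\prime$ (radially inward, as $e<1$), together with the orientation of $\Sigma_0^1$, produces the \emph{positive} half twist of the figure rather than its inverse. Once these conventions are fixed, translating the tangent-vector picture $\{v_1,v_2\}$ into the boundary-marked-point picture $\{u_1,u_2\}$ under the stated isomorphism completes the identification of $\widetilde{\varrho_0}$ with the map of Figure~\ref{lift_monodromy1}.
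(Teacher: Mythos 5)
Your proposal is correct and follows essentially the same route as the paper: the paper also produces an explicit parallel transport over $\partial B_1$ (via a horizontal distribution whose holonomy rotates the fiber disk at half the base angle, hence is tangent to both $S_0$ and $S_0'$) and concludes that the real diameter joining $s_1$ and $s_2$ is preserved, which pins down $\widetilde{\varrho_0}$ as the framed positive half twist of Figure~\ref{lift_monodromy1}. Your half-speed rotation $\Psi_t(z)=e^{\pi i t}z$, tapered to the identity near $\partial B_2$, is precisely such a transport, so the two arguments coincide in substance.
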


\begin{proof}
The element $\widetilde{\varrho_{0}}$ is a lift of $\varrho_{0}$. 
Thus the bold arc in Figure~\ref{lift_monodromy1} should be sent by a representative of $\tilde{\varrho}_0$ (up to isotopy) as described in the figure. 
It is sufficient to prove an arc connecting $s_1$ and $s_2$ is preserved by some representative of $\widetilde{\varrho_{0}}$ since the group $\Mod_{\partial \Sigma_{0}^{1}}(\Sigma_{0}^{3}; \{u_1,u_2\})$ is isomorphic to the group $\Mod_{\partial\Sigma_0^1}(\Sigma_{0}^{1}; \{v_1,v_2\})$. 
We denote the arc $\{ (1, 1-2t) \in \mathbb{C}^2 \hspace{.3em} | \hspace{.3em} t\in [0,1] \}$ by $\gamma \subset q^{-1}(1)$. 
This arc connects the two points in $S_0 \cap q^{-1}(1)$. We take a horizontal distribution $\mathcal{P}$ of $q|_{q^{-1}(\nu\partial B_1)}$ so that it coincides the following distribution on $\partial B_1 \times B_{\frac{3}{2}}$: 
\[
\biggl< \biggl(\frac{\partial}{\partial x_1}\biggr) + \frac{x_3}{2}\biggl(\frac{\partial}{\partial x_3}\biggr) - \frac{x_4}{2} \biggl(\frac{\partial}{\partial x_4}\biggr), \biggl(\frac{\partial}{\partial x_2}\biggr) + \frac{x_4}{2}\biggl(\frac{\partial}{\partial x_3}\biggr) + \frac{x_3}{2} \biggl(\frac{\partial}{\partial x_4}\biggr) \biggr>, 
\]
where $(x_1,x_2,x_3,x_4)$ is a real coordinate determined by the formula 
\[(z_1,z_2) = (x_1+\sqrt{-1}x_2, x_3+ \sqrt{-1}x_4) \,. \]
We define a loop $c: [0, 2\pi]\rightarrow \partial B_1$ as follows: 
\[
c(t) = \exp(\sqrt{-1}t). 
\]
Take a point $t_0 \in [-1,1]$. It is easy to see that the horizontal lift $\tilde{c}_{t_0}(t)$ with base point $w=(1,0,t_0, 0 ) \in q^{-1}(1)$ is given by: 
\[
\tilde{c}_{t_0}(t) = \Bigr(\cos(t), \sin(t), t_{0}\cos\Bigl(\frac{t}{2}\Bigr), t_{0} \sin\Bigl(\frac{t}{2}\Bigr) \Bigl). 
\]
Thus, the arc $\gamma$ is preserved by the parallel transport along the curve $c$ with respect to $\mathcal{P}$. 
Since this parallel transport is a representative of $\widetilde{\varrho_{0}}$, this completes the proof of Lemma \ref{lem_lift_monodromy1}. 
\end{proof}

The two bisections $S_0$ and $S_0^\prime$ intersect only at the origin, but do not intersect transversely. 
In order to make the two bisections intersect transversely, we will take a small perturbation of $S_0^\prime$. 
We first take a smooth function $\rho: \mathbb{R}\rightarrow [0,\varepsilon]$ satisfying the following conditions: 

\begin{enumerate}[(a)]
\item $\rho(t) = \rho(-t)$; 
\item $\rho(t) = \varepsilon^2$ for all $t\in [0, \frac{\varepsilon}{2}]$; 
\item $\rho(t) = 0$ for all $t\in [\varepsilon, \infty)$; 
\item $-3\varepsilon <\frac{d\rho}{dt}(t) < 0$ for all $t \in [\frac{\varepsilon}{2} , \varepsilon]$. 
\end{enumerate}

\noindent We define the subset $S^\prime_{0,\rho}\subset \mathbb{C}^2$ as follows: 
\[
S^\prime_{0,\rho} = \{ (z^2, ez+ \rho(|z|^2)) \in \mathbb{C}^2 \hspace{.3em} | \hspace{.3em} z\in \mathbb{C} \}. 
\]
The two subsets $S_0$ and $S^\prime_{0,\rho}$ intersect at $(r_1^2, r_1), (r_2^2, r_2) \in \mathbb{C}^2$, where $r_1,r_2\in \mathbb{R}$ is the real numbers which satisfy the following conditions: 
\[
r_1 = \frac{\rho(r_1^2)}{\varepsilon} ~,~ r_2 = \frac{\rho(r_2^2)}{2- \varepsilon}. 
\]
We can see that $S_0$ intersects $S^\prime_{0,\rho}$ at both points transversely and positively with respect to the standard orientation of $\mathbb{C}^2$.

\vspace{0.15cm}
\subsection{Multisections branching at Lefschetz critical points} \

We will now study the local model around a branched point of a multisection coinciding with a Lefschetz critical point of the fibration. Such branched points appear \textit{exclusively} in Loi and Piergallini's description of compact Stein surfaces, up to diffeomorphisms, as total spaces of \textit{allowable Lefschetz fibrations} over the $2$-disk with bounded fibers, arising as the branched cover of the projection $D^2 \x D^2 \to D^2$ branched along a positive multisection \cite{LP} (also see \cite{AkbulutOzbagci}). Such a multisection, along with the fiber, carries the entire information one needs to describe the diffeomorphism type of any compact Stein surface.

We take two points $s_1,s_2 \in \Int(\Sigma_{0}^{2})$. We denote an involution with fixed point set $\{s_1,s_2\}$ by $\iota: \Sigma_{0}^{2}\rightarrow \Sigma_{0}^{2}$. 
The quotient space $\Sigma_0^2/\iota$ is diffeomorphic to the disk $\Sigma_0^1$. 
Denote the images of $s_1$ and $s_2$ under the quotient map $\Sigma_0^2\to \Sigma_0^2/\iota \cong \Sigma_0^1$ by $s_1^\prime$ and $s_1^\prime$, respectively. 
The group $\Mod_{\partial\Sigma_0^1}(\Sigma_0^1; \{s_1^\prime, s_2^\prime\})$ is an infinite cyclic group. 
By Lemma \ref{lem_involutionMCG} the natural map $\pi_0(C_{\partial \Sigma_0^2}(\Sigma_{0}^{2}; \iota)) \rightarrow \Mod_{\partial\Sigma_0^1}(\Sigma_0^1; \{s_1^\prime, s_2^\prime\})$ induced by the quotient map is injective. 
The inclusion map $C_{\partial \Sigma_0^2}(\Sigma_{0}^{2}; \iota)\hookrightarrow \Diff^+_{\partial \Sigma_{0}^{2}}(\Sigma_{0}^{2})$ induces the homomorphism $i: \pi_0(C_{\partial \Sigma_0^2}(\Sigma_{0}^{2}; \iota)) \rightarrow \Mod_{\partial \Sigma_{0}^{2}}(\Sigma_{0}^{2}) \cong \mathbb{Z}$. 
Since this map is surjective, the group $\pi_0(C_{\partial \Sigma_0^2}(\Sigma_{0}^{2}; \iota))$ is also an infinite cyclic group and the map $i$ is an isomorphism. 
On the other hand, the inclusion map $C(\Sigma_{0}^{2}; \iota)\hookrightarrow \Diff^+_{\partial \Sigma_{0}^{2}}(\Sigma_{0}^{2}; \{s_1,s_2\})$ also induces a homomorphism $i_{\ast}: \pi_0(C_{\partial \Sigma_0^2}(\Sigma_{0}^{2}; \iota)) \rightarrow \Mod_{\partial \Sigma_{0}^{2}}(\Sigma_{0}^{2}; \{s_1,s_2\})$. 
We denote by 
\[F_{s_1,s_2}: \Mod_{\partial \Sigma_{0}^{2}}(\Sigma_0^2; \{s_1,s_2\})\rightarrow \Mod_{\partial \Sigma_{0}^{2}}(\Sigma_0^2)\]
the forgetful map. Since the composition $F_{s_1,s_2}\circ i_{\ast}$ is equal to $i$ and $i$ is isomorphic, the map $i_{\ast}$ is injective. 
Thus, we can regard the group $\pi_0(C_{\partial \Sigma_0^2}(\Sigma_0^2; \iota)) \cong \Mod_{\partial \Sigma_{0}^{2}}(\Sigma_{0}^{2})$ as the subgroup of $\Mod_{\partial \Sigma_{0}^{2}}(\Sigma_{0}^{2}; \{s_1,s_2\})$. 
Under this identification, the Dehn twist $t_c\in \Mod_{\partial \Sigma_{0}^{2}}(\Sigma_{0}^{2})$ along the curve parallel to $\partial \Sigma_{0}^{2}$, which is the generator of this group, is regarded as an element in $\Mod_{\partial \Sigma_{0}^{2}}(\Sigma_{0}^{2}; \{s_1,s_2\})$ described in Figure~\ref{lift_Dehntwist}. 
\begin{figure}[htbp]
\begin{center}
\includegraphics[width=50mm]{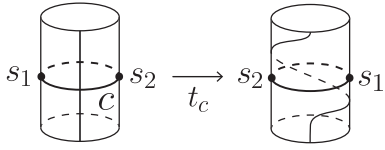}
\end{center}
\caption{The element $t_c$ interchanges $s_1$ and $s_2$. }
\label{lift_Dehntwist}
\end{figure}

We denote by $Y\subset\mathbb{C}^2$ the intersection $B_2\times B_2 \cap f^{-1}(B_1)$, where $B_k$ is the disk $\{z\in\mathbb{C}~|~\left|z\right|\leq k\}$ and $f:\mathbb{C}^2\rightarrow \mathbb{C}$ is the standard local model of a Lefschetz singularity, that is, $f$ is defined as $f(z_1,z_2)= z_1z_2$. 
Let $f_0$ be the restriction $f|_{Y}$. 
Take the standard bisection of $\Delta_0 = \{(z,z)\in Y~|~ z\in B_1\}$ of $f_0$. 
We define the involution $\eta: \mathbb{C}^2\rightarrow \mathbb{C}^2$ as follows: 
\[
\eta(z_1,z_2) = (z_2,z_1). 
\]
The fixed point set of $\eta$ is equal to $\Delta_0$. 
The regular fiber $f_0^{-1}(1)$ is the annulus $\Sigma_0^2$. 
We take an identification $f_0^{-1}(1)\cong \Sigma_0^2$ so that the restriction $\eta|_{f_0^{-1}(1)}$ equals to the involution $\iota$. 
By taking a horizontal distribution $\mathcal{P}$ of the fibration $f_0|_{Y\setminus \{0\}}$ which is along both $\Delta_0$ and $\partial Y$, we can regard the monodromy $\varrho_0$ of $\partial B_1$ as an element of the group $\Mod_{\partial \Sigma_{0}^{2}}(\Sigma_{0}^{2}; \{s_1,s_2\})$, where $\{s_1,s_2\}$ is the intersection $\Delta_0\cap f_0^{-1}(1)$. 

\begin{lemma}\label{lem_MCG_branchedonCrit}
Under the identification of $f_0^{-1}(1)$ with $\Sigma_0^2$ as above, the monodromy $\varrho_0$ is equal to the Dehn twist $t_{c}\in \Mod_{\partial \Sigma_{0}^{2}}(\Sigma_{0}^{2}; \{s_1,s_2\})$. 
\end{lemma}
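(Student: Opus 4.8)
The plan is to deduce the marked-point refinement from the classical (forgetful) monodromy computation together with the $\eta$-equivariance of $\varrho_0$, using the rigidity of the embedding $i_\ast$ recorded just above the lemma. First I would recall the classical Picard--Lefschetz description of the monodromy of the standard node $f(z_1,z_2)=z_1z_2$: forgetting the marked points, $F_{s_1,s_2}(\varrho_0)$ is the right-handed Dehn twist along the core of the annular fiber $f_0^{-1}(1)$, which is exactly the generator $t_c$ of $\Mod_{\partial\Sigma_0^2}(\Sigma_0^2)\cong\mathbb{Z}$. Since this is precisely the datum we are trying to promote, one may either cite it or reproduce it via an explicit horizontal distribution as in the proof of Lemma~\ref{lem_lift_monodromy1}.

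Second, I would upgrade this to an equivariant statement. The involution $\eta(z_1,z_2)=(z_2,z_1)$ commutes with $f_0$, fixes $\Delta_0$ pointwise and preserves $\partial Y$, so the tangency conditions defining an admissible horizontal distribution (along $\Delta_0$ and along $\partial Y$) are themselves $\eta$-invariant. Averaging any such distribution over the $\mathbb{Z}/2$-action generated by $\eta$ yields an $\eta$-invariant horizontal distribution $\mathcal{P}$, and the parallel transport along $\partial B_1$ with respect to such a $\mathcal{P}$ then commutes with $\eta$. Restricting to $f_0^{-1}(1)$, this shows that $\varrho_0$ commutes with $\iota$, whence $\varrho_0$ lies in the image of the map $i_\ast\colon\pi_0(C_{\partial\Sigma_0^2}(\Sigma_0^2;\iota))\to\Mod_{\partial\Sigma_0^2}(\Sigma_0^2;\{s_1,s_2\})$.

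Finally I would invoke the structure established just before the lemma: $i_\ast$ is injective with $F_{s_1,s_2}\circ i_\ast=i$ an isomorphism onto $\mathbb{Z}$, and the generator of $\pi_0(C_{\partial\Sigma_0^2}(\Sigma_0^2;\iota))$ is carried by $i_\ast$ exactly to the element $t_c$ pictured in Figure~\ref{lift_Dehntwist}. Since $\varrho_0\in\Im(i_\ast)$ by the second step and $F_{s_1,s_2}(\varrho_0)$ is the generator of $\mathbb{Z}$ by the first, the commuting triangle $F_{s_1,s_2}\circ i_\ast=i$ forces $\varrho_0=i_\ast(\text{generator})=t_c$ in $\Mod_{\partial\Sigma_0^2}(\Sigma_0^2;\{s_1,s_2\})$.

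The main obstacle is the equivariance step: one must verify that an $\eta$-invariant distribution meeting the tangency conditions along $\Delta_0$ and $\partial Y$ genuinely exists, so that $\varrho_0$ lands in the rigid cyclic subgroup $\Im(i_\ast)$ on which the forgetful map is injective. Once this is secured, the delicate bookkeeping about the marked points --- which of $s_1,s_2$ is carried to which, and that the diagonal arc joining them is preserved --- is handled for free by the injectivity of $F_{s_1,s_2}$ on $\Im(i_\ast)$, rather than requiring a separate hands-on computation. If instead one prefers a self-contained argument that avoids the classical input, the explicit lift computation modeled on Lemma~\ref{lem_lift_monodromy1} --- writing down an $\eta$-invariant $\mathcal{P}$ on $Y\setminus\{0\}$ and tracking the horizontal lifts of $\partial B_1$ --- recovers directly both the interchange of $s_1,s_2$ and the preservation of the connecting arc, matching Figure~\ref{lift_Dehntwist}.
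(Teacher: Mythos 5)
Your proposal is correct and follows essentially the same route as the paper's own proof: the paper likewise takes an $\eta$-invariant horizontal distribution so that $\varrho_0$ lands in the subgroup $\pi_0(C_{\partial \Sigma_0^2}(\Sigma_{0}^{2}; \iota))\subset \Mod_{\partial \Sigma_{0}^{2}}(\Sigma_{0}^{2}; \{s_1,s_2\})$, cites Kas to identify $F_{s_1,s_2}(\varrho_0)$ with the Dehn twist $t_c$ in $\Mod_{\partial \Sigma_{0}^{2}}(\Sigma_{0}^{2})$, and concludes via the rigidity of that subgroup under the forgetful map. Your averaging construction of the invariant distribution and the explicit appeal to the triangle $F_{s_1,s_2}\circ i_{\ast}=i$ simply spell out steps the paper leaves implicit.
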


\begin{proof}
We take a horizontal distribution $\mathcal{P}$ so that $\mathcal{P}$ is preserved by $\eta$. 
The monodromy $\varrho_0$ is contained in the group $\pi_0(C_{\partial \Sigma_0^2}(\Sigma_{0}^{2}, \iota))\subset \Mod_{\partial \Sigma_{0}^{2}}(\Sigma_{0}^{2}; \{s_1,s_2\})$. 
Furthermore, using the result in \cite{Kas_1980}, it is easy to see that this monodromy is sent to the Dehn twist $t_c\in \Mod_{\partial \Sigma_{0}^{2}}(\Sigma_{0}^{2})$ by $F_{s_1,s_2}$. 
%
\end{proof}

We take a disk neighborhood $D_i\subset \Sigma_{0}^{2}$ of the point $s_i$ which is preserved by $\iota$. 
We put $D=D_1\amalg D_2$ and fix an identification $\Sigma_{0}^{2}\setminus D \cong \Sigma_{0}^{4}$. 
We also take points $u_i, u_i^\prime\in \partial D_i$ so that $\iota(u_i) = u_i^\prime$. 
We can define the following homomorphism: 
\[
Cap: \Mod_{\partial \Sigma_{0}^{2}}(\Sigma_{0}^{4}; \{u_1, u_2\}) \rightarrow \Mod_{\partial \Sigma_{0}^{2}}(\Sigma_{0}^{2}; \{s_1,s_2\}), 
\]
by capping $\Sigma_{0}^{4}$ by $D$. 

We take a sufficiently small number $\varepsilon >0$ and put $\xi = \exp(\sqrt{-1}\varepsilon)$. 
We define another bisection $\Delta_0^\prime$ of $f_0$ as follows: 
\[
\Delta_0^\prime = \{(\xi z , \xi^{-1} z) \in Y~|~z\in B_1\}. 
\]
Note that $\Delta_0^\prime$ intersects $\Delta_0$ at the origin transversely. 
This bisection, together with the bisection $\Delta_0$, gives a lift $\widetilde{\varrho_0} \in \Mod_{\partial \Sigma_{0}^{2}}(\Sigma_{0}^{4}; \{u_1,u_2\})$ of the monodromy $\varrho_0$ under the map $Cap$. 

\begin{lemma}\label{lem_liftmonodromy2}

Under a suitable identification $\Sigma_0^4\cong f_0^{-1}(1)\setminus \nu \Delta_0$, the monodromy $\widetilde{\varrho_0}$ is represented by the map described in Figure~\ref{lift_monodromy2}. 

\begin{figure}[htbp]
\begin{center}
\includegraphics[width=100mm]{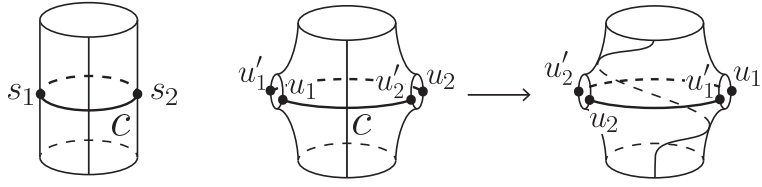}
\end{center}
\caption{The element $\widetilde{\varrho_0}$ interchanges the points $u_1,u_2$. }
\label{lift_monodromy2}
\end{figure}

\end{lemma}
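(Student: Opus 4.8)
The plan is to mirror the proof of Lemma~\ref{lem_lift_monodromy1}, exploiting that the \emph{uncapped} monodromy is already pinned down. By Lemma~\ref{lem_MCG_branchedonCrit} we know that $Cap(\widetilde{\varrho_0}) = \varrho_0 = t_c$, the marked-point Dehn twist interchanging $s_1$ and $s_2$ depicted in Figure~\ref{lift_Dehntwist}. Hence $\widetilde{\varrho_0}$ is determined up to the kernel of $Cap$, which is generated by boundary twists and boundary pushing maps supported near the two new boundary circles $\partial(\nu \Delta_0)$ arising from removing $\nu\Delta_0$. To single out $\widetilde{\varrho_0}$ among these, it therefore suffices to compute the images of the marked points $u_1, u_2$ together with the framing of $\partial(\nu\Delta_0)$ that the second bisection $\Delta_0'$ records, since $\Delta_0'$ furnishes precisely the push-off of $\Delta_0$ that frames its tubular neighborhood.

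Next I would make the transport explicit in symmetric coordinates. Writing $z_1 = a+b$, $z_2 = a-b$, one has $f_0 = a^2 - b^2$, the involution becomes $\eta\colon (a,b) \mapsto (a,-b)$, the fixed bisection is $\Delta_0 = \{b=0\}$, and a short computation gives $\Delta_0' = \{(z\cos\varepsilon,\, \sqrt{-1}\,z\sin\varepsilon) \mid z \in B_1\}$, with $f_0 = z^2$ along it. As in the proof of Lemma~\ref{lem_MCG_branchedonCrit} I fix a horizontal distribution $\mathcal{P}$ that is preserved by $\eta$ and tangent to both $\Delta_0$ and $\partial Y$; by Lemma~\ref{lem_involutionMCG} the resulting parallel transport commutes with $\iota$ on the reference fiber and descends compatibly. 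The model rescaling $(a,b) \mapsto (e^{\sqrt{-1}\theta/2}a,\, e^{\sqrt{-1}\theta/2}b)$ sends $f_0$ to $e^{\sqrt{-1}\theta} f_0$ and preserves both $\Delta_0$ and $\Delta_0'$ as sets, while its time-$2\pi$ map is $(a,b)\mapsto(-a,-b)$. This exhibits, exactly as the formula $\tilde{c}_{t_0}(t) = (\cos t, \sin t, t_0\cos(t/2), t_0\sin(t/2))$ did in Lemma~\ref{lem_lift_monodromy1}, a rotation of the fiber and of the normal framing of $\Delta_0$ by the half-angle $t/2$; at $t = 2\pi$ it is a half-turn interchanging $s_1 = (1,1) \leftrightarrow (-1,-1) = s_2$ and carrying the $\Delta_0'$-fiber points $(\xi,\xi^{-1}) \leftrightarrow (-\xi,-\xi^{-1})$, i.e.\ $u_1 \leftrightarrow u_2$, together with their framings as drawn in Figure~\ref{lift_monodromy2}.

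The main obstacle is to rule out any \emph{extra} framing twist, i.e.\ to show the net holonomy of the framing is exactly this half-turn and not off by a boundary twist; this is what distinguishes $\widetilde{\varrho_0}$ within $\Ker(Cap)$. I would handle it by comparing the honest $\partial Y$-tangent distribution $\mathcal{P}$ with the model rescaling above: the two agree on the core and on the normal directions along $\Delta_0$, differing only by a twist supported near $\partial Y$, which is invisible to the data on $\partial(\nu\Delta_0)$ and to the marked points; meanwhile the half-angle behavior of the explicit lift shows the framing rotates by exactly $\pi$ over $t\in[0,2\pi]$. The transversality of the holomorphic bisection $\Delta_0'$ to $\Delta_0$ controls the intersection pattern of $\Delta_0'$ with the transported $\Delta_0$, confirming that $u_1, u_2$ are interchanged with no residual twisting. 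Combining this framing computation with $Cap(\widetilde{\varrho_0}) = t_c$ and the symmetry afforded by Lemma~\ref{lem_involutionMCG}, I conclude that $\widetilde{\varrho_0}$ is represented precisely by the map of Figure~\ref{lift_monodromy2}.
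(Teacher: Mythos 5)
There is a genuine gap, and it sits exactly at the crux of the lemma. Your strategy is to pin down $\widetilde{\varrho_0}$ inside its coset of $\Ker(Cap)$ by recording (i) where the marked points $u_1,u_2$ go and (ii) that the correction needed near $\partial Y$ is ``invisible to the data on $\partial(\nu\Delta_0)$ and to the marked points.'' But this criterion cannot distinguish the map of Figure~\ref{lift_monodromy2} from its composition with full boundary twists along the components of $\partial(\nu\Delta_0)$: such a twist is supported in a collar of an inner boundary circle and fixes $u_1,u_2$, so it is equally ``invisible to the marked points,'' and it lies in $\Ker(Cap)$. Ruling out precisely these residual twists is the whole content of the lemma --- it is what feeds the local self-intersection count in Theorem~\ref{mainthm} --- and your proposal supplies no argument (an explicit isotopy, an Alexander-method check on arcs joining the four boundary components, or a winding computation tied to an honest fiberwise trivialization) that excludes them. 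There is a second, related problem: the half-rotation flow $(a,b)\mapsto(e^{\sqrt{-1}\theta/2}a,\,e^{\sqrt{-1}\theta/2}b)$ does not fix $\partial Y$ fiberwise, so its time-$2\pi$ map does not represent any class in $\Mod_{\partial\Sigma_0^2}(\Sigma_0^4;\{u_1,u_2\})$ until it is corrected near the outer boundary; different corrections differ by outer boundary twists, which do \emph{not} lie in $\Ker(Cap)$, so your two reduction steps (the $Cap$ bookkeeping and the flow comparison) do not compose as stated. The assertion that the honest $\partial Y$-tangent transport and the rotation flow ``differ only by a twist supported near $\partial Y$'' is itself unproven, and even granting it, one is left needing both $Cap(\widetilde{\varrho_0})=t_c$ \emph{and} an equivariance argument to kill the outer ambiguity --- pieces you mention but never assemble.

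It is worth noting how the paper sidesteps all of this: it never redoes a transport computation in the Lefschetz model. Instead, after arranging $\widetilde{\varrho_0}$ (via an $\eta$-invariant horizontal distribution) and the Figure~\ref{lift_monodromy2} map to be $\iota$-equivariant, it uses the explicit diffeomorphism $\mathbb{C}^2/\eta \to \mathbb{C}^2$, $[(z_1,z_2)]\mapsto \bigl(z_1z_2, \tfrac{z_1+z_2}{2}\bigr)$, which carries $(f,\Delta_0,\Delta_0')$ to the regular branched-point model $(p, S_0, S_0')$. Hence both classes map under $\eta_\ast$ to the class of Figure~\ref{lift_monodromy1}, already computed in Lemma~\ref{lem_lift_monodromy1}, and injectivity of $\eta_\ast$ (Lemma~\ref{lem_involutionMCG}) forces them to coincide --- the quotient converts the delicate framing bookkeeping into the computation that was already done. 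Your symmetric coordinates $a=\tfrac{z_1+z_2}{2}$, $b=\tfrac{z_1-z_2}{2}$ are one substitution away from this quotient map ($f_0=a^2-b^2$, with $b$ determined up to the sign that $\eta$ flips), but the quotient step, which is what makes the argument rigorous, is never taken in your proposal.
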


\begin{proof}
The map described in Figure~\ref{lift_monodromy2} is contained in $C_{\partial \Sigma_0^2}(\Sigma_{0}^4, \{u_1,u_1^\prime,u_2,u_2^\prime\}; \iota)$.
By the same argument as in the proof of Lemma \ref{lem_MCG_branchedonCrit}, we can assume that the element $\widetilde{\varrho_0}$ is contained in the group $\pi_0(C_{\partial \Sigma_0^2}(\Sigma_{0}^4, \{u_1,u_1^\prime,u_2,u_2^\prime\}; \iota))$. 
It is easy to see that the following map is a diffeomorphism: 
	\begin{equation*}
	\begin{array}{ccc}
\mathbb{C}^2/\eta & \rightarrow & \mathbb{C}^2 \\
\rotatebox{90}{$\in$} &  & \rotatebox{90}{$\in$} \\ 
\bigr[ (z_1 , z_2) \bigl] & \mapsto     & \Bigl( z_1z_2, \dfrac{z_1+z_2}{2} \Bigr).
	\end{array}
	\end{equation*}
We identify these spaces via this diffeomorphism. 
The following diagram commutes: 
\[
\begin{CD}
(\mathbb{C}^2, \Delta_0, \Delta_0^\prime) @>/\eta >> (\mathbb{C}^2, S_0, S^\prime_0) \\
@V f VV @VV p V \\
\mathbb{C} @> \id >> \mathbb{C},  
\end{CD}
\]
where $S_0$ and $S_0^\prime$ are the subsets of $\mathbb{C}^2$ defined in the previous section (in this case, $e$ is equal to $\Re(\xi)$).  
Thus the monodromy $\widetilde{\varrho_0}$ is mapped to the mapping class described in Figure~\ref{lift_monodromy1} by the map 
\[\eta_\ast :\pi_0(C_{\partial \Sigma_0^2}(\Sigma_{0}^4, \{u_1,u_1^\prime, u_2,u_2^\prime\}; \iota))\to \Mod_{\partial \Sigma_{0}^1}(\Sigma_{0}^3; \{u_1,u_2\}) \]
induced by $/\eta$. On the other hand, we can see that the mapping class described in Figure~\ref{lift_monodromy2} is also mapped to that descried in Figure~\ref{lift_monodromy1} by $\eta_\ast$. 
Since $\eta_\ast$ is injective by Lemma \ref{lem_involutionMCG}, these two mapping classes coincide. 
%
\end{proof}

\vspace{0.15cm}
\subsection{Capturing multisections via mapping class group factorizations} \

With all the preliminary results we have obtained in the previous subsections, we are now ready to prove the main theorem of this section:

\begin{theorem}\label{mainthm}
Let $f: X \rightarrow S^2$ be a genus-$g$ Lefschetz fibration with monodromy factorization
\[ t_{c_l}\cdot \cdots \cdot t_{c_1} =1 \, . \]
Let $S\subset X$ be a genus-$g$ surface with self-intersection $m$, which is an $n$-section of $f$ with $k$ branched points away from $\Crit(f)$, and $r$ branched points at Lefschetz singularities corresponding to cycles $c_1,\ldots, c_r$. Then there exists a lift $\widetilde{c_i} \subset \Sigma_{g}^{n}$ of $c_i$ such that the following holds in $\Mod(\Sigma_{g}^{n}; \{u_1,\ldots, u_n\})$: 

\begin{equation}\label{eq_relation_multisection with intersection}
\tilde{\tau}_{\alpha_k} \cdots \tilde{\tau}_{\alpha_1} \cdot t_{\widetilde{c_{l}}} \cdots t_{\widetilde{c_{r+1}}}\cdot \widetilde{t_{c_{r}}}\cdots \widetilde{t_{c_{1}}} = t_{\delta_1}^{a_1}\cdots t_{\delta_n}^{a_n}, 
\end{equation}
where $\{u_1,\ldots, u_n\}$ is a subset of $\partial \Sigma_{g}^{n}$ which covers all the elements of $\pi_0(\partial \Sigma_{g}^{n})$, $\tilde{\tau}_{\alpha_i}$ is a lift of a half twist along the arc $\alpha_i$ as described in Figure~\ref{lift_monodromy1}, $\widetilde{t_{c_i}}$ is a lift of the Dehn twist $t_{c_i}$ as described in Figure~\ref{lift_monodromy2}, and $\{\delta_1, \ldots, \delta_n\}$ is a set of simple closed curves parallel to $\partial \Sigma_{g}^{n}$. 
Here the arcs for $\tilde{\tau}_{\alpha_1}, \ldots, \tilde{\tau}_{\alpha_k}$ and the Dehn twist curves for $\widetilde{t_{c_{1}}}, \ldots , \widetilde{t_{c_{r}}}$ should contain all $u_1, \ldots, u_n$, and the integral equalities $g(S)= \frac{1}{2}(k + r) - n + 1$ and  $m=-(\Sigma_{i=1}^n a_i) + 2k+ r$ should hold.  
 
Conversely, for any relation in $\Mod(\Sigma_{g}^{n}; \{u_1,\ldots, u_n\})$ of the form ~\eqref{eq_relation_multisection with intersection} and satisfying the conditions above, there exists a genus-$g$ Lefschetz fibration $f: X\rightarrow S^2$ with a connected $n$-section $S\subset X$ of genus $\frac{1}{2}(k+r) - n + 1$ and self-intersection $- (\Sigma_{i=1}^n a_i ) + 2k +r$,  whose monodromy factorization is given by the image of the factorization on the left hand side  of~\eqref{eq_relation_multisection with intersection} under $i_\ast:\Mod(\Sigma_{g}^{n}; \{u_1,\ldots, u_n\}) \rightarrow \Mod(\Sigma_{g})$ which is induced by the inclusion 
$i:\Sigma_{g}^{n}\hookrightarrow \Sigma_g$. 
\end{theorem}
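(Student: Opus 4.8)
The plan is to run the standard Hurwitz-system machinery for $f$, but to carry along the extra framing data recorded by $S$ so that the resulting monodromy naturally lives in the framed group $\Mod(\Sigma_{g}^{n};\{u_1,\dots,u_n\})$ rather than in $\Mod(\Sigma_g)$. First I would fix a regular value $q_0\in S^2$, an identification $f^{-1}(q_0)\cong\Sigma_g$, and a disk $D\subset S^2$ containing all critical values of $f$ together with all branch values of $f|_{S}$; then I would choose disjoint arcs from $q_0$ to these values, ordered counterclockwise, so that the singular and branch values are met in the order prescribed by the factorization. Deleting an embedded tubular neighborhood $\nu S$ from $X$ turns each regular fiber into $\Sigma_{g}^{n}$ (genus $g$ with one boundary circle for each of the $n$ sheets of $f|_{S}$), and a push-off of $S$ supplies exactly the marked points $u_1,\dots,u_n$ that promote the ordinary monodromy representation to one valued in the framed mapping class group, precisely as a $(-1)$-section does in the pencil case recalled in the preliminaries.

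Next I would compute the local contribution to this framed monodromy at each of the three types of distinguished value. Over a branch value of $f|_{S}$ lying away from $\Crit(f)$, Lemma~\ref{lem_lift_monodromy1} identifies the local lift as the half twist $\tilde{\tau}_{\alpha_i}$ of Figure~\ref{lift_monodromy1}; over a value that is simultaneously a Lefschetz critical value and a branch value, Lemmas~\ref{lem_MCG_branchedonCrit} and~\ref{lem_liftmonodromy2} identify it as the lifted Dehn twist $\widetilde{t_{c_i}}$ of Figure~\ref{lift_monodromy2}; and over an ordinary Lefschetz critical value the marked points and the boundary are merely transported along, so the local lift is the Dehn twist $t_{\widetilde{c_i}}$ about the curve $\widetilde{c_i}\subset\Sigma_{g}^{n}$ isotopic in $\Sigma_g$ to $c_i$. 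Multiplying these local contributions in the chosen order gives the left-hand side of~\eqref{eq_relation_multisection with intersection}. Since the global monodromy of $f$ around $\partial D$ is trivial, the corresponding framed monodromy must be supported in $\nu S$, hence be a product of boundary-parallel twists $t_{\delta_1}^{a_1}\cdots t_{\delta_n}^{a_n}$, which yields the relation.

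The two numerical identities I would extract separately. The genus formula $g(S)=\tfrac12(k+r)-n+1$ is the Euler-characteristic (Riemann--Hurwitz) count for the $n$-fold simple branched cover $f|_{S}\colon S\to S^2$ with $k+r$ branch points, namely $2-2g(S)=2n-(k+r)$. The self-intersection formula $m=-(\sum_i a_i)+2k+r$ I would obtain by computing $S\cdot S$ as the signed count of intersections of $S$ with a push-off determined by the framing: each boundary-twist exponent $a_i$ contributes $-a_i$, exactly as the power of $t_{\delta}$ records the square of an exceptional section; each regular branch point contributes $+2$, the pair of transverse positive intersections of $S_0$ and $S^\prime_{0,\rho}$ produced by the perturbation above; and each Lefschetz-singularity branch point contributes $+1$, the single transverse positive intersection of $\Delta_0$ and $\Delta_0^\prime$. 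For the converse I would reverse these steps: applying the forgetful homomorphism $i_\ast$ kills every $\tilde{\tau}_{\alpha_j}$ and every $t_{\delta_j}$ (a half twist swapping two capped-off boundary circles, and a boundary-parallel twist, both become trivial in $\Mod(\Sigma_g)$) and sends each lifted twist to $t_{c_i}$, recovering a genuine genus-$g$ Lefschetz factorization $t_{c_l}\cdots t_{c_1}=1$ and hence a fibration $(X,f)$; then I would reconstruct $S$ by gluing the local branched-cover models of the previous subsections according to the arcs $\alpha_i$ and the curves $\widetilde{c_i}$ through the $u_j$, closing up over $\partial D$ using the $t_{\delta_i}^{a_i}$, where the hypothesis that the $\alpha_i$ and $c_i$ hit every $u_j$ guarantees that $S$ is connected.

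The main obstacle, I expect, is the self-intersection computation together with the verification that the framed lift is genuinely well defined: one must check that the push-off framing glued from the disjoint local models $S_0$, $S^\prime_{0,\rho}$, $\Delta_0$, $\Delta_0^\prime$ is consistent across the whole Hurwitz system, and that the residual boundary twisting is correctly measured by $\sum_i a_i$ with signs matching the convention that a $(-1)$-section corresponds to $t_{\delta}^{1}$. The bookkeeping at the Lefschetz-singularity branch points, where $S$ passes through a node of the fiber, is the most delicate point, and this is exactly where Lemma~\ref{lem_liftmonodromy2}, together with the injectivity supplied by Lemma~\ref{lem_involutionMCG}, is needed to pin down the lift unambiguously.
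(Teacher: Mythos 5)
Your proposal is correct and follows essentially the same route as the paper's proof: the same Hurwitz-system setup with a push-off of $S$ (the paper's perturbation $S^\prime$) providing the framed lift, the same appeals to Lemmas~\ref{lem_lift_monodromy1}, \ref{lem_MCG_branchedonCrit} and~\ref{lem_liftmonodromy2} for the local monodromies, the same intersection count for $m$ ($+2$ per regular branch point from $S_0\cap S^\prime_{0,\rho}$, $+1$ per nodal branch point from $\Delta_0\cap\Delta_0^\prime$, and $-a_i$ from the boundary twists), and the same converse by pasting local models. The only cosmetic difference is that you derive the genus formula directly from Riemann--Hurwitz, whereas the paper packages the identical Euler-characteristic count into the graph $\Gamma$ with vertices $u_i$ and edges the swapping twists, which it uses simultaneously to track connectedness of $S$.
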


\noindent Note that after relabeling the arcs we choose for the monodromy description of $f$, we can always assume that the first $r$ cycles are the ones corresponding to those where $S$ is branched at. 

\vspace{0.1in}
The reader might find it illuminating to look at an example before we move on to proving our theorem:

\begin{example} \label{FactExample}
A monodromy factorization of a genus-$2$ Lefschetz fibration with a $2$-section we will produce in Section~\ref{SectionGenus2Counterex} is the following:
\[ \large{t_{\delta_1}^2 t_{\delta_2}^3= (t_{d_3}t_{d_2}t_{d_1})^2 t_{y_2} \tilde{\tau}_{\alpha_2} t_z t_{y_1} \tilde{\tau}_{\alpha_1} t_{t_{c_1}^{-1} t_{c_3}^{-2} (c_2)} t_{t_{c_3}^{-1}(c_2)} (t_{c_1} t_{c_2} t_{c_3})^2}  \, , \]
in $\Mod(\Sigma_{2}^{2}; \{u_1,u_2 \})$, where the Dehn twist curves $c_i, d_i, y_j, z$ are given in blue in Figure\ref{SCYEnriques}, the $\delta_i$ are the two boundary components, and in red are the arcs $\alpha_j$ for the half-twists $\tilde{\tau}_{\alpha_j}$. (One can of course conjugate each $\tilde{\tau}_{\alpha_i}$ all the way to the left to put it into the above ``standard form''.) 
\begin{figure}[htbp]
\begin{center}
\includegraphics[width=125mm]{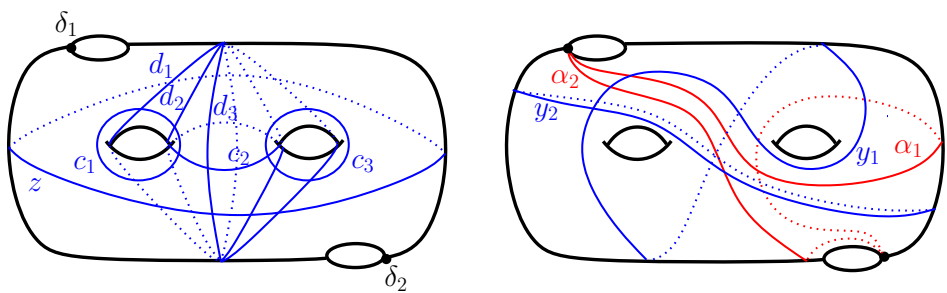}
\end{center}
\caption{Dehn twist curves $c_1, c_2, c_3, d_1, d_2, d_3, z, y_1, y_2$ and arc twist curves $\alpha_1, \alpha_2$ in $\Sigma_2^2$ with framed boundary.}
\label{SCYEnriques}
\end{figure}
Since $u_1$ and $u_2$ are connected by $\tau_i$, this gives a connected $2$-section $S$ of genus $\frac{k}{2}-n+1=0$ and self-intersection $-\sum_{i=1}^n a_i + 2k = -1$; so it is an exceptional sphere. 
\end{example}

\begin{proof}[Proof of Theorem~\ref{mainthm}]
For a given genus-$g$ Lefschetz fibration $f:X\rightarrow S^2$ with an $n$-section $S$ and its monodromy factorization $t_{c_l}\cdots t_{c_1}=1$, let $\gamma_1,\ldots, \gamma_l$ be reference paths from a regular value $q_0\in S^2$ which gives the factorization $t_{c_l}\cdots t_{c_1}=1$. 
We take reference paths $\alpha_1,\ldots, \alpha_{k}$ satisfying the following properties: 

\begin{itemize}

\item $\alpha_i$ connects $q_0$ with the image of a branched point of $S$ away from $\Crit(f)$; 

\item $\gamma_1,\ldots, \gamma_{l}, \alpha_1,\ldots, \alpha_{k}$ appear in this order when we go around $q_0$ counterclockwise. 

\end{itemize}

\noindent
We take a perturbation $S^\prime$ of $S$ so that the pair $(S, S^\prime)$ coincides with either of the pairs $(S_0, S_0^\prime)$ or $(\Delta_0, \Delta_0^\prime)$ in a small coordinate neighborhood of each branched point of $S$. 
This perturbation gives a lift of monodromies of $f$ to the group $\Mod(\Sigma_g^n; \{u_1,\ldots, u_n\})$. 
By Lemma \ref{lem_lift_monodromy1}, local monodromies obtained from paths $\alpha_i$ are lifts of half twists described in Figure~\ref{lift_monodromy1}. 
On the other hand, by Lemma \ref{lem_liftmonodromy2}, a local monodromy obtained from a path $\gamma_i$ ($i \in \{1,\ldots, r \}$) is a lift of the Dehn twist $t_{c_i}$ described in Figure~\ref{lift_monodromy2}. Thus we can obtain a factorization in Theorem \ref{mainthm}. 

Using the observation following the proof of Lemma \ref{lem_lift_monodromy1} and the fact that $\Delta_0$ intersects $\Delta_0^\prime$ at the origin transversely, it is easy to verify that this factorization satisfies the condition on the self-intersection number of $S$. 

Conversely, for a given lift of a factorization given in Theorem \ref{mainthm}, we can prescribe a genus-$g$ Lefschetz fibration $f: X\rightarrow S^2$ and an $n$-section $S$ of $f$ with desired conditions by pasting local models given in the present section according to the factorization. 

There is a correspondence between a connected multisection $S$ and that of a graph $\Gamma$ with vertices corresponding to $u_i$, and edges between $u_i$ and $u_{i'}$ corresponding to half twists 
$\tilde{\tau}_{\alpha_j}$ or Dehn twists $\widetilde{t_{c_{j}}}$ in the relation \eqref{eq_relation_multisection with intersection} interchanging them. The Euler characteristic of $S$ is then given by $2v-e$, for $v$ the number of vertices and $e$ the number of edges of $\Gamma$. Since $S$ is connected, the arcs for $\tilde{\tau}_1, \ldots, \tilde{\tau}_{k}$ and the Dehn twist curves for $\widetilde{t_{c_{1}}}, \ldots , \widetilde{t_{c_{r}}}$ should contain all of $u_1, \ldots, u_n$, and in this case, we have $g(S)= \frac{1}{2}(k +r) - n + 1$. 
\end{proof}

Per the last paragraph of the proof above, the monodromy factorization in Theorem~\ref{mainthm} can be generalized to disconnected multisections in a straightforward way ---see Subsection~\ref{SCYsubsection}. A sample calculation of a monodromy factorization of a Lefschetz fibration with its multisections is given in the Appendix, and many more examples can be found in Sections $4$--$6$. 

\begin{remark} 
After a small modification of the proof above we can similarly obtain a monodromy factorization for a \textit{not necessarily positive} multisection, where each negative branched point away from $\Crit(f)$ contributes $-2$ and each branched point at a negative critical point contributes $-1$ to the total count of the self-intersection of the multisection. 
\end{remark}

\begin{remark} \label{rem_perturbation} \
We shall note that, although multisections going through Lefschetz critical points are of particular interest in certain contexts (for instance for allowable Lefschetz fibrations on Stein surfaces), it is in fact always possible to perturb any given multisection of a Lefschetz fibration so as to obtain one which is branched completely away from the Lefschetz critical points. This can be achieved by the following perturbation around each branched point on a Lefschetz singularity: 
\[
\Delta_\varepsilon = \{(z+\varepsilon, z-\varepsilon)\in \mathbb{C}^2~|~ z\in\mathbb{C} \}, 
\]
where $\varepsilon$ is a sufficiently small positive number. 
\begin{figure}[htbp]
\begin{center}
\includegraphics[width=25mm]{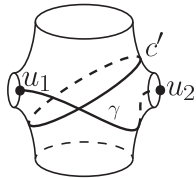}
\end{center}
\caption{Simple closed curves and paths in $\Sigma_0^4$. }
\label{interpretation_lift}
\end{figure}
%
In this perturbation, a branched point on a Lefschetz singularity is substituted for a positive branched point. 
Indeed, we can easily verify the following relation in the group $\Mod_{\partial \Sigma_0^2}(\Sigma_0^4; \{u_1,u_2\})$ (using the Alexander method \cite[Proposition 2.8]{Farb_Margalit_2011}, for example): 
\begin{equation}\label{eq_substitution}
\widetilde{t_{c}} = t_{\delta}^{-1} t_{c^\prime} \tau_{\gamma}, 
\end{equation}
where $\delta$ is a simple closed curve parallel to the boundary component containing $u_2$, $c^\prime$ is a simple closed curve described in Figure~\ref{interpretation_lift} and $\tau_{\gamma}$ is a lift of a half twist preserving the path $\gamma$ in Figure~\ref{interpretation_lift}. 

Using Theorem \ref{mainthm}, we can then make a multisection avoiding Lefschetz singularities by substituting a lift $\widetilde{t_c}$ in a lift of a factorization \eqref{eq_relation_multisection with intersection} of the right hand side of \eqref{eq_substitution}.
\end{remark}

\begin{remark}[Hurwitz equivalence for Lefschetz fibrations with multisections] 
It is well-known that for $g \geq 2$, there is a one-to-one correspondence between genus--$g$ Lefschetz fibrations \textit{up to isomorphisms} and monodromy factorizations in 
$\Mod(\Sigma_{g})$ \textit{up to Hurwitz moves and global conjugations}. It is possible to extend this correspondence to our setting, by considering positive factorizations in the framed mapping class group $\Mod(\Sigma_{g}^{n}; \{u_1,\ldots, u_n\})$ up to usual Hurwitz moves, global conjugations that are allowed to swap boundary components, and an additional move which compensates for the ambiguity in boundary framings. A detailed study will be given in \cite{BH2}.
\end{remark}

\vspace{0.2in}
\section{Lefschetz fibrations on symplectic Calabi-Yau $4$-manifolds} \label{SCY}
%

Symplectic $4$-manifolds of negative Kodaira dimension are classified up to symplectomorphisms, which are precisely the rational and ruled surfaces \cite{Li3}. The next compelling target has been the symplectic $4$-manifolds of Kodaira dimension zero, which are analogues of the Calabi-Yau surfaces that have torsion canonical class \cite{Li1}. With a slight abuse of language, we will thus call $(X, \omega)$ with $\kappa=0$ a \textit{symplectic Calabi-Yau}, referring to its minimal model having a torsion canonical class. It has been shown by Li, and independently by Bauer \cite{Li2, Li1, Bauer}, that the rational homology type of any \textit{minimal} symplectic Calabi-Yau $4$-manifold is that of either a torus bundle over a torus, the $\K$ surface, or the Enriques surface. In particular, a folklore conjecture states that a symplectic Calabi-Yau with $b_1=0$ is diffeomorphic to a (blow-up of) either the Enriques surface or the $\K$ surface. 

With this conjecture in mind, below we will determine the defining properties for a Lefschetz pencil\,/\, fibration to be on a (blow-up of) a symplectic Calabi-Yau $4$-manifold, essentially relying on Taubes' seminal work \cite{T, T2}.  We will then construct two model examples, a genus-$3$ Lefschetz pencil on a symplectic Calabi-Yau $\K$ surface, and a genus-$2$ pencil on a symplectic Calabi-Yau Enriques surface. (That is, these are symplectic Calabi-Yaus \textit{homeomorphic} to $\K$ and Enriques surfaces, respectively.)

\vspace{0.15cm}
\subsection{Characterizing Lefschetz fibrations on symplectic Calabi-Yaus} \label{SCYsubsection} \

Fibers of a symplectic Lefschetz fibration $(X,\omega, f)$ are $J$-holomorphic with respect to any almost complex structure $J$ compatible with $\omega$. It follows from Taubes' seminal work on the correspondence between Gromov and Seiberg-Witten invariants on symplectic $4$-manifolds with $b^+(X)>1$ that exceptional classes $e_j$ in $H_2(X)$ are represented by disjoint $J$-holomorphic $(-1)$-spheres $S_j$ \cite{T, T2}, and by the work of Li and Liu \cite{LiLiu}, the same holds even when $b^+(X)=1$, as long as $X$ is not a rational or ruled surface. We then conclude from the positivity of intersections for $J$-holomorphic curves that each $S_j$ is an $n_j$-section, intersecting the genus $g \geq 2$ fiber $F$ positively at exactly $n_j= S \cdot F \geq 1$ points. Moreover, in this case, we can use the Seiberg-Witten adjunction inequality to show that $\sum n_j = (\sum S_j) \cdot F \leq 2g-2$. Note that this can fail to be true only for rational and ruled surfaces; a Lefschetz pencil on a rational or ruled surface can have more than $2g-2$ base points. We will show that the equality is sharp precisely for Lefschetz fibrations on symplectic Calabi-Yaus. 

We can now characterize Lefschetz fibrations on minimal symplectic Calabi-Yau $4$-manifolds and their blow-ups, using Sato's work in \cite{Sato_2013}:

\begin{theorem}\label{SCYLF}
Let $(X,f)$ be a genus-$g$ Lefschetz fibration with $g \geq 2$, and $X$ be neither rational nor ruled. Then, there exists a symplectic form $\omega$ on $X$ compatible with $f$ such that $(X,\omega)$ is a (blow-up of) a symplectic Calabi-Yau $4$-manifold, if and only if there is a disjoint collection of $(-1)$-spheres that are $n_j$-sections of $(X,f)$ such that $\sum_j n_j = 2g-2$.
\end{theorem}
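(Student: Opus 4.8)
The plan is to prove both directions by relating the symplectic Kodaira dimension of $X$ to the self-intersection of the canonical class, which for a Lefschetz fibration can be computed from the fiber genus and the exceptional $(-1)$-spheres. The key numerical input is the adjunction-type formula for the canonical class of a symplectic Lefschetz fibration: when $g \geq 2$ and $X$ carries a compatible symplectic form $\omega$, Taubes' work (as recalled in the paragraph preceding the statement) gives that the canonical class $K_X$ pairs with the fiber $F$ as $K_X \cdot F = 2g-2$, and that each exceptional class is represented by a disjoint $J$-holomorphic $(-1)$-sphere $S_j$ meeting $F$ positively in exactly $n_j = S_j \cdot F$ points, so that $S_j$ is an $n_j$-section. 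The engine throughout is the adjunction inequality $\sum_j n_j = (\sum_j S_j)\cdot F \leq 2g-2$, already established in the text, together with the blow-up formula $K_X = \pi^* K_{X_{\min}} + \sum_j \PD(e_j)$ relating $K_X$ to the canonical class of the minimal model.

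\textbf{The forward direction.} First I would assume $(X,\omega)$ is a blow-up of a symplectic Calabi-Yau, so $K_{X_{\min}}$ is torsion and in particular $K_{X_{\min}}^2 = 0$ and $K_{X_{\min}}\cdot[\omega_{\min}]=0$. Writing $X$ as the blow-up of $X_{\min}$ at some number of points with exceptional classes $e_1,\ldots,e_N$, the blow-up formula gives $K_X^2 = K_{X_{\min}}^2 - N = -N$. On the other hand, the exceptional classes of $X$ (in the sense of the maximal collection of disjoint symplectic $(-1)$-spheres) are exactly these $e_j$, and each is realized as an $n_j$-section with $n_j = e_j \cdot F$. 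Summing the intersections with $F$ and using $K_X\cdot F = 2g-2$, I would show that $\sum_j n_j = 2g-2$ must hold with \emph{equality}; the point is that a Calabi-Yau minimal model forces the fiber class to be a positive rational multiple of $K_X$ modulo the exceptional summands, so the adjunction inequality is saturated. This is where Sato's work \cite{Sato_2013} enters, supplying the sharp count of exceptional sections for fibrations whose total space has Kodaira dimension zero.

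\textbf{The converse direction.} Conversely, suppose there is a disjoint collection of $(-1)$-sphere $n_j$-sections with $\sum_j n_j = 2g-2$. Since $g \geq 2$, Gompf's construction equips $(X,f)$ with a compatible symplectic form $\omega$ for which the fibers and all the chosen sections are symplectic; moreover these $(-1)$-spheres can be taken symplectic and disjoint, so they may be symplectically blown down to produce a minimal model $(X_{\min},\omega_{\min})$. I would then compute $\kappa_{X_{\min}}^2$ and $\kappa_{X_{\min}}\cdot[\omega_{\min}]$ and show both vanish, forcing $\kappa = 0$. The self-intersection $\kappa_{X_{\min}}^2 = K_X^2 + (2g-2)$ vanishes precisely because the hypothesis $\sum_j n_j = 2g-2$ saturates the adjunction bound, and the pairing with $[\omega]$ is nonnegative by the adjunction/positivity for $J$-holomorphic fibers while being bounded above by $0$ once the exceptional contributions are stripped off. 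The combination rules out $\kappa = -\infty$ (we have assumed $X$ is neither rational nor ruled, so the Li--Liu results apply to give the $J$-holomorphic representatives even when $b^+=1$) and rules out $\kappa = 1, 2$ (which would require $\kappa_{X_{\min}}^2 > 0$ or a strictly positive pairing with $[\omega_{\min}]$), leaving $\kappa = 0$ exactly.

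\textbf{Main obstacle.} I expect the crux to be establishing the sharp equality $\sum_j n_j = 2g-2$ characterizes the Calabi-Yau condition rather than merely being consistent with it; the inequality $\leq 2g-2$ is automatic from adjunction, but pinning down that equality is equivalent to $\kappa_{X_{\min}}^2 = 0$ \emph{and} $\kappa_{X_{\min}}\cdot[\omega_{\min}] = 0$ simultaneously requires carefully excluding the Kodaira dimension $1$ and $2$ cases, where $K_X \cdot F = 2g-2$ still holds but the canonical class of the minimal model has strictly positive square or positive symplectic area. This is precisely the delicate point that Sato's theorem \cite{Sato_2013} is invoked to handle, so the real work is in correctly translating between the exceptional-section count on the fibration side and the Kodaira dimension dichotomy on the minimal-model side.
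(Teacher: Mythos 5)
Your forward direction is essentially the paper's, but its justification is garbled. The correct argument is short: writing $K_X = \pi^*K_{X_{\min}} + \sum_j \PD(e_j)$ and using that $K_{X_{\min}}$ is torsion, $K_X$ is \emph{rationally} Poincar\'e dual to $\sum_j e_j$; hence $\sum_j n_j = \bigl(\sum_j e_j\bigr)\cdot F = K_X\cdot F = 2g-2-F^2 = 2g-2$ by adjunction applied to the symplectic fiber, where Taubes and Li--Liu supply the realization of each $e_j$ as a disjoint $J$-holomorphic $(-1)$-sphere $n_j$-section. Your claim that ``a Calabi-Yau minimal model forces the fiber class to be a positive rational multiple of $K_X$ modulo the exceptional summands'' is false as stated (the fiber has positive symplectic area, while $K_X$ modulo the exceptional classes is torsion), and Sato's theorem plays no role in this direction --- it is needed only for the converse.

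The genuine gap is in the converse. First, your formula $\kappa_{X_{\min}}^2 = K_X^2 + (2g-2)$ is wrong: blowing down $N$ disjoint $(-1)$-spheres increases $K^2$ by $N$, and $N$ (the number of spheres in your collection) equals $2g-2$ only when every $n_j=1$; in general $N < \sum_j n_j$. More seriously, the argument is circular: nothing in the hypothesis lets you evaluate $K_X^2$ or bound $\kappa_{X_{\min}}\cdot[\omega_{\min}]$ above by zero --- that is precisely the content of the theorem, and no direct numerical manipulation of the count $\sum_j n_j = 2g-2$ can rule out $\kappa = 1$ or $2$, since $K_X\cdot F = 2g-2$ holds in those cases too. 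You also never justify that blowing down your given spheres yields a \emph{minimal} manifold. The paper closes all of this at once with Sato's Theorem~5.5 (corrected so that the conclusion holds modulo torsion): one first notes, via the adjunction inequality $\sum_i E_i\cdot F \leq 2g-2$ over the maximal collection of exceptional classes and the fact that each exceptional class meets $F$ at least once, that your collection realizes the \emph{full} maximal collection; Sato's theorem then says that this saturation forces $c_1(X,\omega)$ to be rationally Poincar\'e dual to $\sum_j e_j$, so blowing down all the $e_j$ produces a minimal symplectic $4$-manifold with torsion canonical class, i.e.\ $\kappa=0$. You correctly identify Sato's theorem as the crux in your closing remark, but your written converse never actually deploys it, and without it the argument does not close.
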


\begin{proof}
If $X$ is minimal, then $(X, \omega, f)$ can be a symplectic Calabi-Yau only if the fiber genus is $1$, by the adjunction formula. We can thus assume that $X$ is not minimal and $g \geq 2$. In this case, $(X, \omega)$ should be an $m \geq 1$ times blow-up of a minimal symplectic Calabi-Yau, so $c_1(X, \omega)$ is Poincar\'{e} dual to $\sum_{j=1}^m e_j$, where $e_j$ are the exceptional classes. We have $m$ disjoint $n_j$-sections $S_j$, representing the exceptional classes $e_j$. Then the adjunction formula for the symplectic fiber dictates that
\[  \sum_j n_j = (\sum S_j) \cdot F = \kappa_X \cdot F = 2g-2 - F^2 = 2g -2 \, . \]

Conversely, it is shown by Sato \cite[Theorem~5.5.]{Sato_2013} that for a genus $g\geq2$ Lefschetz fibration on a non-minimal $4$-manifold $(X,f)$, where $X$ is not rational or ruled, if the maximal collection of exceptional classes $e_j$ intersect the fiber exactly at $2g-2$ times, then $c_1(X, \omega)$ is Poincar\'{e} dual to $\sum e_j$. Although there is an oversight in this observation, which for instance contradicts with the case of Lefschetz fibrations on blow-ups of the Enriques surface (such examples for homotopy Enriques surfaces are given in the later sections), Sato's proof in \cite{Sato_2013}, which is obtained by a thorough analysis of  intersections between pseudo-holomorphic curves, goes through for a \textit{rational} homology class, i.e. modulo torsion. With this corrected statement in mind, blowing down all $e_j$ yields a minimal symplectic model for $(X, \omega)$ with torsion canonical class.
\end{proof}

We will thus call $(X,f)$ a genus-$g$ \textit{symplectic Calabi-Yau Lefschetz fibration} if $X$ is not rational or ruled, and there is a disjoint collection of $(-1)$-spheres that are $n_j$-sections of $(X,f)$ with $\sum_j n_j = 2g-2$. Note that not every symplectic Calabi-Yau Lefschetz fibration is a blow-up of a Lefschetz pencil on a minimal symplectic Calabi-Yau, the examples of which we will provide in sections $4$--$6$.

Let $W$ be a factorization of the multitwist $t_{\delta_1}^{a_1}\cdots t_{\delta_{n}}^{a_{n}}$ in $\Mod(\Sigma_{g}^{2g-2}; \{u_1,\ldots, u_{n}\})$
\begin{equation} \label{scyfactorization}
\tilde{\tau}_{\alpha_k} \cdots \tilde{\tau}_{\alpha_1} \cdot t_{\widetilde{c_{l}}} \cdots t_{\widetilde{c_1}} = t_{\delta_1}^{a_1}\cdots t_{\delta_{n}}^{a_{n}}, 
\end{equation}
for $n= 2g-2$. Recall that by Remark~\ref{rem_perturbation} we can simplify the right-hand side as above so that there are no $\widetilde{t_{c_j}}$. Consider the associated graph $\Gamma= \Gamma_W$ whose vertices correspond to $u_i$ and edges to half twists $\tilde{\tau}_{\alpha_j}$ interchanging them. After relabeling $\delta_j$ if needed, we can assume that the connected components \,$\Gamma_1, \ldots, \Gamma_s$\, of $\Gamma$ have vertices $\{u_1, \ldots u_{j_1}\}, \{u_{j_1+1}, \ldots, u_{j_2}\}, \ldots, \{u_{j_{s-1}+1 }, \ldots, u_n\}$, respectively, for a subsequence $j_1, \ldots, j_{s-1}$ of $1, \ldots, n$. Let $k_t$ be the corresponding number of $\tilde{\tau}_{\alpha_j}$ involved in the points $u_i$ in each $\Gamma_t$. We then impose the following additional conditions: 
\begin{itemize}
\item for each $\Gamma_t$, $2v_t - e_t=2$, and \ 
\item $-\left(\Sigma_{j=j_{t-1}+1}^{j_{t}} a_j\right) + 2k_t = -1$, \, for every $t = 1, \ldots, s$ . \
\end{itemize}
Observe that these two conditions translate to each connected component of the corresponding multisection to be a $2$-sphere and of self-intersection $-1$, respectively. Isolated vertices of $\Gamma_W$ amount to exceptional \textit{sections}, which can be blown-down to a pencil.

Let $G(W)$ be the quotient of $\pi_1(\Sigma_g)$ by $N(c_1, \ldots, c_l)$, the subgroup normally generated by $c_i$, and denote by $b_1(W)$ the first Betti number of $G$. 
Let $\sigma(W)$ be the signature of the image of the positive word $t_{c_1} \cdot \ldots \cdot t_{c_l}$ in $\Mod(\Sigma_{g})$ under the boundary capping homomorphism, and  $\eu(W)=4-4g+l$ be the associated Euler characteristic.  We can then set $b^+(W)= \frac{1}{2}(\eu(W)+2b_1(W)-2+ \sigma(W))$.

We obtain a characterization of monodromy factorizations of symplectic Calabi-Yau Lefschetz fibrations:

\begin{corollary} \label{SCYcor}
Let $W$ be a factorization in the framed mapping class group for \linebreak $g \geq 2$, such that either $G(W)$ is not a surface group, or $G(W)=1$ but $b^+(W) \neq 1$. If the associated graph $\Gamma_W$ satisfies the properties listed above, and in addition, has at least one isolated vertex, then the reduced word $t_{c_l} \cdots t_{c_1}$ is a monodromy factorization of a symplectic Calabi-Yau Lefschetz pencil. Conversely, on any symplectic Calabi-Yau $4$-manifold, one can find a Lefschetz pencil with a monodromy lift like such $W$.
\end{corollary}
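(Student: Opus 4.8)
The plan is to treat the two directions separately, using the constructive half of Theorem~\ref{mainthm} to pass between framed factorizations and geometry, and Theorem~\ref{SCYLF} to recognize the symplectic Calabi-Yau condition once the ambient manifold is known to be neither rational nor ruled. Throughout I would exploit that $g \geq 2$ guarantees that the reduced word $t_{c_l}\cdots t_{c_1}$, obtained as the image of the left-hand side of $W$ under the forgetful homomorphism $i_\ast$, is the monodromy factorization of an honest genus-$g$ Lefschetz fibration $(X,f)$ over $S^2$: the half-twist factors $\tilde{\tau}_{\alpha_j}$ and the boundary multitwists $t_{\delta_i}^{a_i}$ are killed by $i_\ast$ (the former because the corresponding branched points of $f|_S$ lie away from $\Crit(f)$ and so contribute no singular fibers), leaving precisely $t_{c_l}\cdots t_{c_1}=1$ in $\Mod(\Sigma_g)$.

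For the \emph{forward} direction, I would first invoke the converse of Theorem~\ref{mainthm}, in its disconnected form, to realize $W$ as a genus-$g$ Lefschetz fibration $(X,f)$ together with a multisection $S$. The graph conditions pin down the topology of $S$: on each component $\Gamma_t$ the equality $2v_t-e_t=2$ forces $g(S_t)=0$, while $-(\sum_j a_j)+2k_t=-1$ forces self-intersection $-1$; since the total number of sheets is $n=2g-2$, the collection $\{S_t\}$ is a disjoint union of $(-1)$-spheres that are $n_t$-sections with $\sum_t n_t = 2g-2$. The remaining task is to certify that $X$ is neither rational nor ruled, so that Theorem~\ref{SCYLF} applies. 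Here I would use that every rational or ruled surface has $\pi_1$ a surface group (trivial for rational surfaces and for $S^2$-bundles over $S^2$, and $\pi_1(\Sigma_h)$ for an $S^2$-bundle over $\Sigma_h$) and has $b^+=1$; after identifying $G(W)=\pi_1(X)$ and checking $b^+(W)=b^+(X)$ through the standard formula $b^+=\tfrac12(\chi+2b_1-2+\sigma)$ with $\chi=\eu(W)$, $b_1=b_1(W)$, $\sigma=\sigma(W)$, the two alternative hypotheses on $W$ exclude both possibilities. Theorem~\ref{SCYLF} then yields a compatible $\omega$ making $(X,\omega)$ a (blow-up of a) symplectic Calabi-Yau. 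Finally, an isolated vertex of $\Gamma_W$ is an exceptional $1$-section, i.e. a $(-1)$-sphere section, which I would blow down to convert $(X,f)$ into a pencil; the presence of at least one such vertex is exactly what produces a nonempty base locus, so that $t_{c_l}\cdots t_{c_1}$ is the monodromy factorization of a symplectic Calabi-Yau Lefschetz pencil.

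For the \emph{converse}, start with a symplectic Calabi-Yau $(X,\omega)$ and, after blowing down, pass to its minimal model $X_{\mathrm{min}}$. By Donaldson's theorem $X_{\mathrm{min}}$ carries a genus-$g$ symplectic Lefschetz pencil with $g\geq 2$, and since the canonical class is torsion the adjunction formula $2g-2=F^2+\kappa\cdot F=F^2$ shows the base locus consists of exactly $2g-2$ points. Blowing these up gives a symplectic Calabi-Yau Lefschetz fibration with $2g-2$ disjoint exceptional $1$-section spheres, to which Theorem~\ref{mainthm} associates a lift $W$ whose graph $\Gamma_W$ is $2g-2$ isolated vertices, so that all the graph conditions and the isolated-vertex requirement hold automatically. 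It remains to verify the dichotomy on $G(W)$ and $b^+(W)$; as $\pi_1$ and $b^+$ are blow-up invariant, this reduces to minimal symplectic Calabi-Yaus, which by Li and Bauer are rational-homology torus bundles over tori, $\K$, or Enriques surfaces. In each case $\pi_1$ is either non-trivial and not a surface group (the aspherical torus-bundle models have four-dimensional Poincar\'e duality groups, and the Enriques model has $\pi_1=\Z/2$), or trivial with $b^+=3\neq 1$ (the $\K$ case); hence one of the two alternatives always holds.

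The main obstacle I anticipate is precisely this $G(W)$/$b^+(W)$ verification, and its consistency across both directions. In the forward direction one must confirm that $G(W)$, $b_1(W)$, $\sigma(W)$ genuinely compute $\pi_1(X)$, $b_1(X)$, $\sigma(X)$ --- routine, but requiring careful bookkeeping through the boundary-capping homomorphism --- and that the rational and ruled surfaces are exhausted by the surface-group/$b^+=1$ description. In the converse one leans on the structural classification of symplectic Calabi-Yaus to rule out both a non-trivial surface-group $\pi_1$ and a simply-connected model with $b^+=1$; this is where the only partially established (conjectural) classification must be navigated, and I would phrase the argument to depend only on the proven rational-homology classification together with the asphericity of the torus-bundle models, rather than on the full diffeomorphism conjecture.
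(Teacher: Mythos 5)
Your forward direction is essentially the argument the paper intends (the corollary is stated without proof, as a consequence of Theorem~\ref{mainthm} and Theorem~\ref{SCYLF}): the constructive half of Theorem~\ref{mainthm} yields $(X,f)$ with a disjoint union of $(-1)$-sphere multisections of total degree $2g-2$, the hypothesis on $G(W)$ and $b^+(W)$ rules out rational and ruled surfaces (all of which have surface fundamental group and $b^+=1$), Theorem~\ref{SCYLF} applies, and the isolated vertex is an exceptional section that blows down to give the pencil. One point worth making explicit: the isolated vertex does double duty, since the identification $G(W)\cong\pi_1(X)$ (and hence $b_1(W)=b_1(X)$, $b^+(W)=b^+(X)$) already requires the existence of a section; without one, $\pi_1(X)$ is a further quotient of $G(W)$ and the exclusion of rational/ruled surfaces would not follow.

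The genuine gap is in the converse, exactly at the step you flagged, and your proposed fix does not close it. The Li--Bauer theorem \cite{Li2,Li1,Bauer} classifies only the \emph{rational homology} of minimal symplectic Calabi--Yaus; it gives no information about $\pi_1$ beyond $b_1\le 4$. In particular, "asphericity of the torus-bundle models" is a property of the models, not of the SCY at hand: a minimal SCY with $b_1=2$ could a priori have $\pi_1=\Z^2$, and one with $b_1=4$ could a priori have $\pi_1=\pi_1(\Sigma_2)$ --- both surface groups --- without contradicting any rational-homology constraint, so your dichotomy verification fails precisely in the cases it must handle. What is actually needed: (i) to exclude $\pi_1(\Sigma_h)$, $h\ge 2$, observe that a finite cover of a minimal SCY is again a minimal SCY (torsion canonical class pulls back to a torsion class, and adjunction then forbids symplectic $(-1)$-spheres), so Li--Bauer bounds $b_1$ of every finite cover by $4$; since $\pi_1(\Sigma_h)$ has finite-index subgroups of arbitrarily large abelianization rank, this gives a contradiction --- this is the virtual Betti number argument of Friedl--Vidussi \cite{FriedlVidussi}; (ii) to exclude $\Z^2$ the covering trick is useless (all finite-index subgroups are again $\Z^2$), and one must invoke the Friedl--Vidussi result that virtually solvable SCY groups with $b_1>0$ are infrasolvmanifold groups, hence of cohomological dimension $4$, which $\Z^2$ is not; (iii) in the simply connected case, $b^+\ne 1$ is not automatic from "the $\K$ case": you must rule out a simply connected SCY with the rational homology of the Enriques surface, e.g.\ by noting that $\pi_1=1$ and torsion canonical class force $c_1=0$ (no torsion in $H^2$), hence the minimal model is spin, hence $16\mid\sigma$ by Rokhlin, incompatible with $\sigma=-8$. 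With (i)--(iii) supplied the converse goes through; as written, it does not.
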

\noindent As demonstrated by our examples in Section~\ref{Stipsicz}, one can also have SCY Lefschetz fibrations with \textit{no} exceptional sections. The first direction of the corollary can be extended to include such examples, too, provided a little care is given to the calculation of $G(W)$ (and thus $b_1(W)$) if no other lifts with pure sections are known.

Motivated by the \textit{conjectural} smooth classification of symplectic Calabi-Yau \linebreak $4$-manifolds, we can formulate a parallel problem for groups that can possibly be fundamental groups of SCYs \cite{FriedlVidussi}:

\begin{question}[Symplectic Calabi-Yau groups via mapping class factorizations] \label{SCYconjecture}
For any positive factorization $W$ of the boundary multitwist as in the corollary, is it always the case that $G(W)=1$, $\Z/2\Z$, or an infrasolvmanifold\footnote{which covers all other known SCYs with $b_1 \neq 0$ \cite{FriedlVidussi}} fundamental group? 
\end{question}

\noindent A negative answer to this question amounts to the existence of \textit{new} symplectic Calabi-Yaus. Whereas for a positive answer, since it suffices to work with pencils on minimal SCYs, one can restrict to positive factorizations $W$ in $\Mod_{\partial \Sigma_g^{2g-2}}(\Sigma_{g}^{2g-2})$ with no $\tau_{\alpha_i}$ on the left and all $a_i=1$ on the right side of the Equality~\ref{scyfactorization}. Thus, understanding all possible SCY groups is equivalent to understanding $G(W)$, where $W$ runs through all possible positive Dehn twist factorizations of the boundary multi-twist $t_{\delta_1} \cdots t_{\delta_{2g-2}}$.

Constructing mapping class group factorizations of the boundary multitwist in $\Mod(\Sigma_{g}^{2g-2};U)$ however is a rather challenging task in general. The next two subsections will demonstrate two successful cases: we will construct Lefschetz pencils on symplectic Calabi-Yau $\K$ and Enriques surfaces with \emph{explicit monodromy factorizations}, respectively. These will serve as sources of various interesting fibrations we will derive from them via surgical operations in the later sections of our paper. What is of key importance here is the \emph{special configurations of Lefschetz vanishing cycles} in the factorizations we get, and to produce them we will appeal to several symmetries of surfaces and lift better known mapping class relations on spheres or tori with boundaries to higher genera surfaces. 

\vspace{0.15cm}
\subsection{A genus-$3$ pencil on a symplectic Calabi-Yau $\K$ surface} \ 

We now construct an explicit monodromy for a genus-$3$ Lefschetz fibration with exactly $4$ disjoint $(-1)$-sphere sections on a $4$ times blown-up symplectic Calabi-Yau $\K$ surface, thus a pencil on a symplectic Calabi Yau $4$-manifold homeomorphic to the $\K$ surface. The equivalent monodromy factorization we derive at the end will be used in further constructions, and is the main motivation for us to go after this particular factorization.

\begin{lemma}\label{lem_relation_genus3}

The following relation holds in $\Mod_{\partial \Sigma_3^4}(\Sigma_3^4)$: 
\begin{equation}\label{eq_relation_genus3}
(t_{c_1}t_{c_7}t_{c_3}t_{c_5}t_{c_2}t_{c_6}t_{a_1}t_{a_2}t_{b_1}t_{b_2}t_{c_1}t_{c_7}t_{c_3}t_{c_5}t_{b_1}t_{b_2}t_{c_2}t_{c_6})^2 = t_{\delta_1}t_{\delta_2}t_{\delta_3}t_{\delta_4},
\end{equation}
where $a_i, b_j, c_k, \delta_l\subset \Sigma_3^4$ are simple closed curves shown in Figure~\ref{fig_scc_genus3}. 
\begin{figure}[htbp]
\begin{center}
\includegraphics[width=115mm]{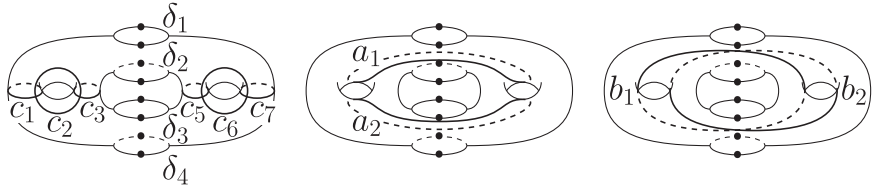}
\end{center}
\caption{Simple closed curves in $\Sigma_3^4$. The curve $\delta_i$ is parallel to a boundary component. }
\label{fig_scc_genus3}
\end{figure}

\end{lemma}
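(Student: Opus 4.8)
The plan is to avoid a direct $36$-twist computation in $\Mod_{\partial\Sigma_3^4}(\Sigma_3^4)$ and instead to exploit an order-two symmetry, exactly as the preamble to this section advertises, reducing the identity to a much simpler relation on a lower-genus quotient by means of the injectivity statement in Lemma~\ref{lem_involutionMCG}. Concretely, I would equip $\Sigma_3^4$ with an orientation-preserving involution $\eta$ that interchanges the paired curves, $\eta(a_1)=a_2$ and $\eta(b_1)=b_2$, and preserves (setwise) each $c_i$ and each $\delta_l$, so that the quotient $\Sigma_3^4/\eta$ is a sphere or a torus with boundary and finitely many marked points arising from the fixed locus of $\eta$. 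The grouping of the word $W$ on the left-hand side is tailored to this symmetry: every factor is either an invariant twist $t_{c_i}$ or occurs inside one of the commuting pairs $t_{a_1}t_{a_2}$, $t_{b_1}t_{b_2}$, so that $W$ — and hence $W^2$ — commutes with $\eta$; the boundary multitwist on the right-hand side commutes with $\eta$ as well. Working first in the version of the mapping class group in which the boundary is preserved only setwise (so that $\eta$ itself is admissible), both sides then lie in the centralizer of $\eta$ to which Lemma~\ref{lem_involutionMCG} applies.

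The core of the argument is to compute the images under the quotient map and verify the resulting relation downstairs. Using the local models of Lemma~\ref{lem_lift_monodromy1}, the pushed-down map sends $t_{a_1}t_{a_2}\mapsto t_{\bar a}$ and $t_{b_1}t_{b_2}\mapsto t_{\bar b}$, each invariant $t_{c_i}$ to a half-twist $\bar\sigma_i$ between the two marked points it separates, and the boundary multitwist to a corresponding product of boundary twists; the claimed equality becomes
\[
\bar W^2 = \prod_i t_{\bar\delta_i}
\]
in the mapping class group of the quotient surface. Because this lives on a genus $\leq 1$ surface with only a handful of factors, it should be a \emph{known} relation — a spherical/toroidal braid relation among the marked points, or the monodromy of the standard elliptic fibration decorated by the marked points — that one confirms either by direct recognition or by the Alexander method. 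Establishing this downstairs relation is, in my estimate, where the real work lies.

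Having verified the relation in the quotient, Lemma~\ref{lem_involutionMCG} returns the upstairs identity up to the kernel $\langle[\eta]\rangle$, i.e.\ $W^2$ equals $\prod_l t_{\delta_l}$ or $[\eta]\cdot\prod_l t_{\delta_l}$. I would eliminate the $[\eta]$ alternative by a bookkeeping argument on the boundary: $[\eta]$ acts nontrivially on the boundary circles (it does not fix them pointwise, realizing the deck transformation there), whereas $W^2$ and the boundary multitwist are genuine elements of the pointwise-boundary-fixing group $\Mod_{\partial\Sigma_3^4}(\Sigma_3^4)$, so the $[\eta]$-factor cannot occur. The same bookkeeping fixes the \emph{powers} of the $t_{\delta_l}$: passing between the setwise and the pointwise boundary conventions changes the word by boundary twists only, and their exponents are pinned down by the self-intersection number $-1$ of the corresponding exceptional sections, exactly as in the monodromy-factorization discussion of Section~\ref{Preliminaries}.

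I expect two obstacles. The first is organizational but delicate: one must pin down $\eta$ and the precise isotopy classes of the lifts so that the lifted word is \emph{exactly} the word displayed in~\eqref{eq_relation_genus3}, and not merely a Hurwitz-equivalent or conjugate one; this entails a careful figure-level matching of the curves in Figure~\ref{fig_scc_genus3} with their images, and in particular understanding why $c_4$ is absent (its downstairs image being redundant or already accounted for). The second, and more substantive, is the verification of the downstairs relation $\bar W^2=\prod_i t_{\bar\delta_i}$: although it sits on a much simpler surface, it still mixes the twists $t_{\bar a},t_{\bar b}$ with the braiding of the marked points, so one either grinds through the Alexander method on a spanning set of arcs or decomposes $\bar W$ further using chain and commutation relations until it collapses to the standard elliptic/braid word whose square is known to be the boundary multitwist.
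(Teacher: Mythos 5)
Your overall strategy---push the word down through an involution, verify a simpler relation on the quotient, and pull it back using the injectivity statement of Lemma~\ref{lem_involutionMCG}---is indeed the paper's strategy, but your proposal has two genuine gaps, and the first one is fatal. You never prove the quotient relation $\bar W^2=\prod_i t_{\bar\delta_i}$; you only express the hope that it is ``a known relation.'' The paper's proof works precisely because the involution is chosen so that the quotient relation \emph{is} a known one: the involution $\lambda$ used there is \emph{free}, with quotient $\Sigma_1^4$ and no marked points at all; it swaps $c_1\leftrightarrow c_7$, $c_2\leftrightarrow c_6$, $c_3\leftrightarrow c_5$, $b_1\leftrightarrow b_2$ and preserves each $a_i$, so that $\lambda_\ast(t_{c_1}t_{c_7})=t_{\hat c_1}$, $\lambda_\ast(t_{a_i})=t_{\hat a_i}^2$, $\lambda_\ast(t_{b_1}t_{b_2})=t_{\hat b_1}$, $\lambda_\ast(t_{\delta_j})=t_{\hat\delta_j}^2$; the relation downstairs is then the Korkmaz--Ozbagci relation $(t_{\hat c_1}t_{\hat c_3}t_{\hat c_2}t_{\hat a_1}t_{\hat a_2}t_{\hat c_2})^2=t_{\hat\delta_1}t_{\hat\delta_2}t_{\hat\delta_3}t_{\hat\delta_4}$ in $\Mod_{\partial\Sigma_1^4}(\Sigma_1^4)$ \cite{Korkmaz_Ozbagci_2008}, which one squares and rewrites via $t_{\hat b_1}=(t_{\hat a_1}t_{\hat a_2})^{-1}t_{\hat c_2}t_{\hat a_1}t_{\hat a_2}$ as in \eqref{eq_genus1_quotient}, so that every factor is visibly in the image of $\lambda_\ast$. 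Your involution $\eta$, by contrast, cannot exist. An orientation-preserving involution of $\Sigma_3^4$ preserving each of the four boundary circles must act freely on each of them (a fixed boundary point would force the identity differential there, hence the identity map), so Riemann--Hurwitz gives $-8=2(-2-2h)-k$ for a genus-$h$ quotient with four boundary circles, i.e.\ $k=4-4h\le 4$ interior fixed points. But for each $t_{c_i}$ to descend to a half twist between marked points, each $c_i$ must carry two fixed points, and the curves $c_1,c_3,c_5,c_7$ are pairwise disjoint---they are exactly the four boundary curves of the braiding-lantern configuration exploited later in \eqref{genus3_relation1}---so they alone would require eight. The symmetry you posit is incompatible with Figure~\ref{fig_scc_genus3}, and the ``real work'' your plan defers can never be started in this form.

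Second, your handling of the equivariant step is backwards in a way that costs you exactly the content of the lemma. You enlarge the group so that $\eta$ is admissible, which makes the kernel of the induced map equal to $\langle[\eta]\rangle$, and then you must (i) exclude the alternative $W^2=[\eta]\cdot\prod_l t_{\delta_l}$ and (ii) transfer the conclusion to $\Mod_{\partial\Sigma_3^4}(\Sigma_3^4)$. Your argument for (i) fails: once boundary circles are only preserved setwise, the restriction of $\eta$ to each boundary circle is a rotation, hence isotopic to the identity, so ``acting nontrivially on the boundary'' is not a property of the mapping class $[\eta]$ and cannot distinguish it from boundary-fixing classes. Your argument for (ii) is circular: passing from the setwise to the pointwise group loses precisely the exponents of the boundary twists $t_{\delta_l}$, and these cannot be ``pinned down by the self-intersection $-1$ of the exceptional sections,'' because no pencil is given in advance---the sections and their self-intersection numbers are what one \emph{reads off} from the completed relation via Theorem~\ref{mainthm}, not prior data. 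The paper avoids both problems at once by keeping the boundary pointwise fixed throughout: since $\lambda$ rotates the boundary circles, $\lambda\notin C_{\partial\Sigma_3^4}(\Sigma_3^4;\lambda)$, so Lemma~\ref{lem_involutionMCG} gives injectivity of $\lambda_\ast$ with no kernel ambiguity, and the boundary exponents are matched exactly, $\lambda_\ast(t_{\delta_j})=t_{\hat\delta_j}^2$ against the right-hand side $t_{\hat\delta_1}^2t_{\hat\delta_2}^2t_{\hat\delta_3}^2t_{\hat\delta_4}^2$ of the squared relation, which is what yields \eqref{eq_relation_genus3} on the nose rather than up to boundary twists.
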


\begin{proof}
Let $\lambda$ be an involution on $\Sigma_3^4$ as shown in Figure~\ref{fig_lantern_relation_quotient}. 
\begin{figure}[htbp]
\includegraphics[width=60mm]{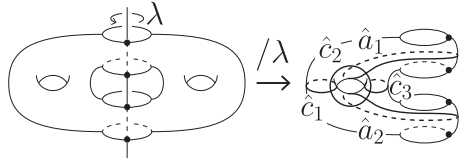}
\caption{The quotient map by an involution $\lambda$. }
\label{fig_scc_genus3_quotient}
\end{figure}
The quotient space $\Sigma_3^4$ is diffeomorphic to the surface $\Sigma_1^4$. 
We describe the images of curves under $/\lambda$ by hatted symbols. 
Using the relation given in \cite[Section 3.4]{Korkmaz_Ozbagci_2008} we obtain the following relation in $\Mod_{\partial\Sigma_1^4}(\Sigma_1^4)$: 
{\allowdisplaybreaks
\begin{align}
& \left(t_{\hat{c}_1}t_{\hat{c}_3}t_{\hat{c}_2}t_{\hat{a}_1}t_{\hat{a}_2}t_{\hat{c}_2}\right)^2 = t_{\hat{\delta}_1}t_{\hat{\delta}_2}t_{\hat{\delta}_3}t_{\hat{\delta}_4} \nonumber\\
\Leftrightarrow & \left(t_{\hat{c}_1}t_{\hat{c}_3}t_{\hat{c}_2}t_{\hat{a}_1}t_{\hat{a}_2}t_{\hat{c}_2}t_{\hat{c}_1}t_{\hat{c}_3}t_{\hat{c}_2}t_{\hat{a}_1}t_{\hat{a}_2}t_{\hat{c}_2}\right)^2 = t_{\hat{\delta}_1}^2t_{\hat{\delta}_2}^2t_{\hat{\delta}_3}^2t_{\hat{\delta}_4}^2 \nonumber\\
\Leftrightarrow & \left(t_{\hat{c}_1}t_{\hat{c}_3}t_{\hat{c}_2}t_{\hat{a}_1}^2t_{\hat{a}_2}^2t_{\hat{b}_1}t_{\hat{c}_1}t_{\hat{c}_3}t_{\hat{b}_1}t_{\hat{c}_2}\right)^2 = t_{\hat{\delta}_1}^2t_{\hat{\delta}_2}^2t_{\hat{\delta}_3}^2t_{\hat{\delta}_4}^2,  \label{eq_genus1_quotient}
\end{align}
}
\noindent where the last relation holds since $t_{\hat{b}_1}$ is equal to $(t_{\hat{a}_1}t_{\hat{a}_2})^{-1}t_{\hat{c}_2}t_{\hat{a}_1}t_{\hat{a}_2}$. 
The quotient map $/\lambda:\Sigma_3^4 \to \Sigma_1^4$ induces the following homomorphism: 
\[
\lambda_\ast : \pi_0(C_{\partial \Sigma_3^4}(\Sigma_3^4;\lambda)) \to \Mod_{\partial \Sigma_1^4}(\Sigma_1^4). 
\]
By Lemma~\ref{lem_involutionMCG} this homomorphism is injective. 
Furthermore it is not hard to see that the following equalities hold: 
{\allowdisplaybreaks
\begin{align*}
t_{\hat{c}_1} = \lambda_\ast(t_{c_1}t_{c_7}), \hspace{.4em}t_{\hat{c}_2} = \lambda_\ast(t_{c_2}t_{c_6}), \hspace{.4em} t_{\hat{c}_3} = \lambda_\ast(t_{c_3}t_{c_5}), \\
t_{\hat{a}_i}^2 = \lambda_\ast(t_{a_i}), \hspace{.4em} t_{\hat{b}_1} = \lambda_\ast(t_{b_1}t_{b_2}), \hspace{.4em}t_{\hat{\delta}_j}^2 = \lambda_\ast(t_{\delta_j}). 
\end{align*}
}
Thus we can obtain the relation \eqref{eq_relation_genus3} using a homomorphism 
\[\pi_0(C_{\partial \Sigma_3^4}(\Sigma_3^4;\lambda)) \to \Mod_{\partial \Sigma_3^4}(\Sigma_3^4) \]
induced by the inclusion $C_{\partial \Sigma_3^4}(\Sigma_3^4;\lambda) \hookrightarrow \Diff^+_{\partial \Sigma_3^4}(\Sigma_3^4)$. 
This completes the proof of Lemma \ref{lem_relation_genus3}. 
\end{proof}

The relation \eqref{eq_relation_genus3} gives rise to a genus-$3$ Lefschetz fibration $(X,f)$ over the \linebreak $2$-sphere with four disjoint $(-1)$-sphere sections.

\begin{proposition} \label{lem_signature_genus3LF}
$(X,f)$ is a genus-$3$ symplectic Calabi-Yau Lefschetz fibration, where the minimal model of $X$ is homeomorphic to a $\K$ surface.
\end{proposition}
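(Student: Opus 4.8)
The plan is to read the relevant topological invariants of $X$ directly off the factorization~\eqref{eq_relation_genus3}, use them to rule out $X$ being rational or ruled, invoke Theorem~\ref{SCYLF} to conclude it is a symplectic Calabi-Yau Lefschetz fibration, and finally identify its minimal model by combining the Li--Bauer classification with Freedman's theorem. First I would record the geometric data. Capping off the four boundary components $\delta_1,\dots,\delta_4$ turns~\eqref{eq_relation_genus3} into a relatively minimal genus-$3$ Lefschetz fibration $f\colon X\to S^2$ carrying four disjoint sections; since each $\delta_j$ occurs to the first power, these are $(-1)$-sphere sections, i.e. four disjoint $1$-sections with $\sum_j n_j = 4 = 2g-2$. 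Counting the $18$ twists in the squared word gives $l=36$ vanishing cycles, so $\chi(X)=2\chi(\Sigma_3)+l=2(-4)+36=28$. Using the presentation $\pi_1(X)\cong\pi_1(\Sigma_3)/N(c_1,\dots,c_l)$ available in the presence of a section, a direct check that the curves $a_i,b_j,c_k$ of Figure~\ref{fig_scc_genus3} normally generate $\pi_1(\Sigma_3)$ shows $\pi_1(X)=1$, and in particular $b_1(X)=0$.

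Next I would compute the signature $\sigma(X)$. This is the one genuinely nontrivial calculation, which I expect to be the main obstacle; I would carry it out by a standard signature algorithm for Lefschetz fibrations (Wall non-additivity and the Meyer signature cocycle, as in Ozbagci's method), tracking the contributions of the $36$ nodes. The outcome is $\sigma(X)=-20$; as a consistency check, once $X$ is known to be a four-fold blow-up of a Calabi-Yau one has $K_X^2=-4$, and the almost-complex identity $c_1^2=2\chi+3\sigma$ indeed forces $\sigma=-20$. With $b_1(X)=0$ this gives $b^+(X)=\tfrac12(\chi(X)+\sigma(X))-1=3>1$. Since every rational or ruled surface has $b^+=1$, the manifold $X$ is neither rational nor ruled. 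Theorem~\ref{SCYLF} now applies: the four disjoint $(-1)$-sphere sections with $\sum_j n_j=2g-2$ certify that $(X,f)$ is a genus-$3$ symplectic Calabi-Yau Lefschetz fibration. Blowing down these four exceptional sections produces a minimal symplectic Calabi-Yau $X_{\min}$ with $\chi(X_{\min})=\chi(X)-4=24$, $\sigma(X_{\min})=\sigma(X)+4=-16$, and, as blow-down preserves $\pi_1$, $\pi_1(X_{\min})=1$.

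It remains to identify $X_{\min}$ up to homeomorphism. Being simply connected, $X_{\min}$ is neither a torus bundle over a torus (infinite fundamental group) nor of Enriques type (fundamental group $\Z/2\Z$); hence by the Li--Bauer classification it has the rational homology type of the $\K$ surface, so $b_2(X_{\min})=22$ and $\sigma(X_{\min})=-16$. Its canonical class is torsion, and since $H^2(X_{\min};\Z)$ is torsion-free it in fact vanishes; thus $X_{\min}$ is spin and its intersection form is even. The unique indefinite even form of rank $22$ and signature $-16$ is $2(-E_8)\oplus 3H$, the intersection form of $\K$, so Freedman's theorem yields a homeomorphism $X_{\min}\cong \K$, as claimed.

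I would emphasize once more that the crux is the signature computation: the Euler characteristic, the $\pi_1$ computation, and the final classification step are all routine given the explicit curves and the quoted results, whereas pinning down $\sigma(X)=-20$ (equivalently $b^+(X)=3$) is precisely what separates the $\K$ surface from the rational surface $\CP\,\#\,25\,\CPb$, which shares its Euler characteristic and fundamental group but has $b^+=1$ and an odd intersection form.
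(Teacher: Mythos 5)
Your outline coincides with the paper's proof: compute $\eu(X)=28$ and $\pi_1(X)=1$ from the factorization, establish $\sigma(X)=-20$ to get $b^+(X)=3$, invoke Theorem~\ref{SCYLF}, and finish with the Li--Bauer classification plus Freedman. Your final step is, if anything, more carefully justified than the paper's (you spell out why the canonical class of the minimal model vanishes and why its intersection form must be $2(-E_8)\oplus 3H$ before invoking Freedman). But there is a genuine gap exactly where you yourself say the crux lies: you never establish $\sigma(X)=-20$. Deferring to ``a standard signature algorithm'' and asserting ``the outcome is $-20$'' is not a computation, and the consistency check you offer is circular: the identity $c_1^2=2\eu+3\sigma$ with $K_X^2=-4$ presupposes that $X$ is a four-fold blow-up of a symplectic Calabi--Yau, which is precisely what can only be concluded (via Theorem~\ref{SCYLF}) after one knows $b^+(X)>1$, i.e. after one knows the signature. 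As you note, with $\eu=28$ and $\pi_1=1$ the rational surface $\CP\#25\,\CPb$ is still a candidate, so nothing short of an actual evaluation of $\sigma(X)$ proves the proposition.

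The paper's proof shows that this evaluation is where all the real work happens, and that it is not a push-button matter. The authors decompose the base sphere into pants and apply Novikov additivity together with Meyer's signature cocycle \cite{Meyer}; the crucial structural input is that every vanishing cycle in the factorization of Lemma~\ref{lem_relation_genus3} is invariant under the hyperelliptic involution of $\Sigma_3$, so all the relevant boundary monodromies lie in the hyperelliptic mapping class group $\mathcal{H}_3$, where Endo's class function $\phi_3$ \cite{Endo_2000} is defined. Even then, $\phi_3$ is not known a priori on the bounding-pair monodromies $\rho(\partial D_j)$; its value $1/7$ is extracted by comparing against Korkmaz's genus-$3$ fibration on $S^2\times T^2\#8\,\CPb$ \cite{Korkmaz_2001}, whose signature $-8$ is known. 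Only after this does $\sigma(X)=-20$ follow. Your proposed Wall-non-additivity computation over all $36$ Dehn twists could in principle replace this argument, but until it is actually executed (by hand or by machine) the central claim, and with it the proposition, remains unproved.
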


\begin{proof}

The topological invariants of $X$ can be calculated using the monodromy factorization. The Euler characteristic of $X$ is the easiest to derive from $\eu(X) = 4g-4+m = 28$, where $g=3$ is the genus of the fibration and $m=36$ is the number of critical points. On the other hand, as the fibration $f \colon X \to S^2$ has a section, $\pi_1(X) \cong\pi_1(\Sigma_3)/\mathcal{C}$, where $\mathcal{C}$ is the normal subgroup of $\pi_1(\Sigma_3)$ generated by the vanishing cycles of $f$. The subcollection $c_1, c_2,c_5, c_6, c_7, b_1$ of vanishing cycles of $f$, taken with base points on the fiber in a straightforward fashion, generates the group $\pi_1(\Sigma_3)$. It follows that $\mathcal{C} = \pi_1(\Sigma_3)$, and in turn, $\pi_1(X)=1$. 

The signature calculation is more involved, and will constitute the rest of the proof. Note that if we knew $b^+(X)>1$ at this point (or simply that $X$ was not a rational surface) then Theorem~\ref{SCYLF} would imply that $(X,f)$ is a symplectic Calabi-Yau Lefschetz fibration which is a $4$ times blow-up of its minimal model. Since it should then have a minimal model which has the rational type of $\K$ surface, by Freedman's topological classification, the minimal model of $X$ should be homeomorphic to $\K$. However, we will be able to derive the simple, yet essential conclusion $b^+(X)>1$ after coupling our signature calculation to follow with our knowledge of $\eu(X)$ and vanishing $b_1(X)$.

Let $S_i \subset X$, $i=1,\ldots, 4$ denote the disjoint sections of self-intersection $-1$ corresponding to boundary components parallel to $\delta_i$ in the factorization in Lemma\ref{lem_relation_genus3}. Each Dehn twist on the left side of~\eqref{eq_relation_genus3} corresponds to a Lefschetz singularity of $f$. We take two disks $D_1, D_2\subset S^2$ which contain two consecutive critical values of $f$ corresponding to the elements $t_{a_1}t_{a_2}$. We also take disk neighborhoods $S_1,\ldots,S_{32}\subset S^2$ of the other critical values of $f$ and a small disk $D_0\subset S^2$ away from critical values of $f$. We decompose the surface $S^2 \setminus (\Int(D_0)\amalg_i \Int(S_i) \amalg_j \Int(D_j))$ into pants $P_1,\ldots, P_{33}$ (which are surfaces diffeomorphic to $\Sigma_0^3$) as follows: 
\begin{itemize}

\item $\partial P_1$ contains $\partial S_1$ and $\partial S_2$. 
We denote the circle $\partial P_1 \setminus (\partial S_1 \cup \partial S_2)$ by $L_1$;

\item $\partial P_i$ contains $L_{i-1}$ and $\partial S_{i+1}$ for each $i=2,\ldots, 31$. 
We denote the circle $\partial P_i \setminus (L_{i-1} \cup \partial S_{i+1})$ by $L_i$; 

\item $\partial P_j$ contains $L_{j-1}$ and $\partial D_{j-31}$ for $j=32,33$. 
We denote the circle $\partial P_j \setminus (L_{j-1} \cup \partial D_{j-31})$ by $L_j$. 

\end{itemize}
We can deduce the following equality by the Novikov additivity: 
{\allowdisplaybreaks
\begin{align*}
\sigma(X) = & \sigma\left(f^{-1}\left(S^2 \setminus (\Int(D_0)\amalg_i \Int(S_i) \amalg_j \Int(D_j))\right)\right) + \sigma(f^{-1}(D_0)) \\
& + \sum_{i=1}^{32} \sigma(f^{-1}(S_i)) + \sum_{j=1}^{2} \sigma(f^{-1}(D_j)) \\
= & \sigma\left(f^{-1}\left(S^2 \setminus (\amalg_i \Int(S_i) \amalg_j \Int(D_j))\right)\right) -2 \\
= & \sum_{i=1}^{33} \sigma(f^{-1}(P_i)) -2,  
\end{align*}
}
where the second equality holds since we have $\sigma(f^{-1}(D_0)) = \sigma(f^{-1}(S_i)) = 0$ and $\sigma(f^{-1}(D_j)) = -1$. 

Let $\iota$ be an involution of $\Sigma_3$ shown in Figure~\ref{fig_hyp_inv}. 

\begin{figure}[htbp]
\begin{center}
\includegraphics[width=40mm]{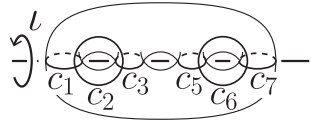}
\end{center}
\caption{The involution $\iota$. }
\label{fig_hyp_inv}
\end{figure}

We denote by $\mathcal{H}_3$ the image of the homomorphism $\pi_1(C(\Sigma_3;\iota)) \to \Mod(\Sigma_3)$ induced by the inclusion $C(\Sigma_g;\iota) \hookrightarrow \Diff^+(\Sigma_3)$. 
Let $\phi_3\colon \mathcal{H}_3\to \frac{\mathbb{Z}}{7}$ be a class function given in \cite[Proposition 3.1]{Endo_2000}. 
For a simple closed curve $L \subset S^2\setminus \Crit(f)$ we denote the monodromy along $L$ by $\rho(L)$, which is a conjugacy class of an element in $\Mod(\Sigma_3)$. 
By the configuration of vanishing cycles of $f$ we can regard $\rho(\partial S_i)$, $\rho(\partial D_j)$ and $\rho(L_k)$ as conjugacy classes in $\mathcal{H}_3$. 
We obtain the following equality by \cite[Satz 1]{Meyer} and \cite[Proposition 3.1]{Endo_2000}: 
{\allowdisplaybreaks
\begin{align*}
\sigma(X) = & \phi_3(\rho(L_1)) - \phi_3(\rho(\partial S_1)) - \phi_3(\rho(\partial S_2)) \\
& + \sum_{i=2}^{31} \left( \phi_3(\rho(L_i)) - \phi_3(\rho(\partial S_{i+1})) - \phi_3(\rho(L_{i-1})) \right) \\
& + \sum_{j=32}^{33} \left( \phi_3(\rho(L_j)) - \phi_3(\rho(\partial D_{j-31})) - \phi_3(\rho(L_{j-1})) \right) -2 \\
= & -\sum_{i=1}^{31} \phi_3(\rho(\partial S_i)) - \sum_{j=1}^{2}\phi_3(\rho(\partial D_j)) + \phi_3(\rho(L_{33})) -2.  
\end{align*}
}
Since the curve $L_{33}$ is homotopic to $\partial D_0$ the monodromy $\rho(L_{33})$ is trivial. 
In particular $\phi_3(\rho(L_{33}))$ is equal to $0$. 
The monodromy $\rho(\partial S_i)$ is a Dehn twist along non-separating simple closed curve in $\Sigma_3$. 
Thus the value $\phi_3(\rho(\partial S_i))$ is equal to $\frac{4}{7}$ (see \cite[Lemma 3.3]{Endo_2000}). 
Since the monodromy $\rho(\partial D_1)$ is equal to $\rho(\partial D_2)$, the value $\phi_3(\rho(\partial D_1))$ is equal to $\phi_3(\rho(\partial D_2))$. 

The manifold $S^2\times T^2 \sharp 8 \overline{\mathbb{CP}^2}$ admits a genus-$3$ Lefschetz fibration $f$ with the following properties (see \cite{Korkmaz_2001}):
\begin{itemize}
\item $f$ has $16$ Lefschetz singularities with non-separating vanishing cycles; 

\item a monodromy factorization of $f$ contains four pairs of Dehn twists along bounding pairs; 

\item the other eight vanishing cycles are preserved by an involution $\iota$. 

\end{itemize}
Since the function $\phi_3$ is preserved by conjugation we obtain the following equality: 
{\allowdisplaybreaks
\begin{align*}
& -8 = \sigma(S^2\times T^2 \sharp 8 \overline{\mathbb{CP}^2}) = 8\left(-\frac{4}{7}\right) + 4\phi_3(\rho(\partial D_1))-4 \\
\Rightarrow & \phi_3(\rho(\partial D_1)) = \frac{1}{4} \left( -8 +4 + \frac{32}{7} \right) = \frac{1}{7}. 
\end{align*}
}
As a result we can calculate the signature of $X$ as 
{\allowdisplaybreaks
\begin{align*}
\sigma(X) = & -31\cdot \frac{4}{7} -2\cdot \frac{1}{7} -2 = -20.
\end{align*}
}
%
%
We can now use 
\[ b^+(X)-b^-(X)=\sigma(X)=-20 \, \text{and} \, 2-2b_1(X)+b^+(X)+b^-(X)= \eu(X) = 28 \, , \]
where $b_1(X)=0$, to conclude that $b^+(X)=3$. Hence, per our discussion preceding the signature calculation, $(X,f)$ is a symplectic Calabi-Yau Lefschetz fibration, where the minimal model of $X$ is homeomorphic to a $\K$ surface.
\end{proof}

\vspace{0.15cm}
\subsection{A genus-$2$ pencil on a symplectic Calabi-Yau Enriques surface} \

We now construct a genus-$2$ Lefschetz fibration with exactly two disjoint \linebreak $(-1)$-sphere sections on a $2$ times blown-up symplectic Calabi-Yau Enriques surface, that is, a genus-$2$ Lefschetz pencil on a symplectic Calabi-Yau $4$-manifold homeomorphic to the Enriques surface.

\begin{lemma}\label{lem_lift_matsumoto}

The following relation holds in $\Mod_{\partial \Sigma_2^2}(\Sigma_2^2)$: 
\[
(t_{d_4}t_{d_3}t_{d_2})^2 t_{d_+}t_{d_-} = t_{\delta_1}t_{\delta_2}, 
\]
where $d_\ast, \delta_i\subset \Sigma_2^2$ are simple closed curves shown in Figure~\ref{fig_scc_matsumoto}. 
\begin{figure}[htbp]
\includegraphics[width=70mm]{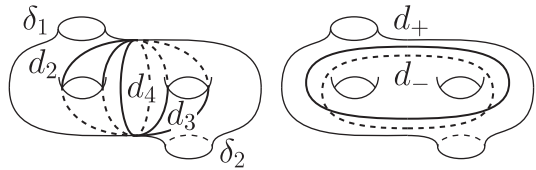}
\caption{Simple closed curves in $\Sigma_2^2$.}
\label{fig_scc_matsumoto}
\end{figure} 

\end{lemma}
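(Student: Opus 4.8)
The plan is to prove this relation by the same mechanism as Lemma~\ref{lem_relation_genus3}: exhibit an orientation-preserving involution $\eta$ on $\Sigma_2^2$, push the desired identity down to the quotient where it becomes a standard, previously understood relation, and then lift it back using the injectivity supplied by Lemma~\ref{lem_involutionMCG}. Since the curves $d_2, d_3, d_4, d_+, d_-$ of Figure~\ref{fig_scc_matsumoto} carry the hyperelliptic symmetry of Matsumoto's genus-$2$ fibration, the natural choice is the hyperelliptic-type involution $\eta$ interchanging the two boundary components $\delta_1, \delta_2$. First I would exhibit $\eta$ (in a companion figure, as was done for $\lambda$ in the genus-$3$ case) and verify that the quotient $\Sigma_2^2/\eta$ is a genus-$0$ surface carrying a finite set of branch points coming from the fixed point set $V$ of $\eta$. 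This places us in the \emph{branched} case of Lemma~\ref{lem_involutionMCG}, so that the induced map $\eta_\ast\colon \pi_0(C_{\partial \Sigma_2^2}(\Sigma_2^2;\eta)) \to \Mod_{\partial(\Sigma_2^2/\eta)}(\Sigma_2^2/\eta; V/\eta)$ is injective.

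Next I would set up the lifting dictionary by inspecting, via the quotient map, how each factor descends. Each $d_i$ is $\eta$-invariant and meets the fixed locus, so $\eta_\ast(t_{d_i})$ is a half-twist along an arc joining two branch points downstairs; the pair $d_+, d_-$ is interchanged by $\eta$, so $\eta_\ast(t_{d_+}t_{d_-})$ is a single Dehn twist; and the interchanged boundaries give $\eta_\ast(t_{\delta_1}t_{\delta_2}) = t_{\hat\delta}$, the twist about the single boundary of the quotient. Under this dictionary the asserted identity becomes a relation purely among half-twists and Dehn twists in the mapping class group of a disk with marked points, i.e.\ a braid relation---precisely the relation whose double cover is Matsumoto's genus-$2$ Lefschetz fibration---which can be verified directly (for instance by the Alexander method, as invoked in Remark~\ref{rem_perturbation}). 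As in the chain of equivalences in the proof of Lemma~\ref{lem_relation_genus3}, where $t_{\hat b_1}$ was rewritten as a conjugate, I would rewrite the downstairs word so that every factor lies in the image of $\eta_\ast$, grouping interchanged curves into pairs and expressing invariant twists as half-twists.

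With the relation established downstairs and all its factors in the image of $\eta_\ast$, injectivity of $\eta_\ast$ promotes it to an identity in $\pi_0(C_{\partial \Sigma_2^2}(\Sigma_2^2;\eta))$; pushing this forward along the inclusion-induced homomorphism $\pi_0(C_{\partial \Sigma_2^2}(\Sigma_2^2;\eta)) \to \Mod_{\partial \Sigma_2^2}(\Sigma_2^2)$ then yields the stated relation, exactly as in the final step of Lemma~\ref{lem_relation_genus3}.

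I expect the main obstacle to be twofold. First, because $\eta$ has fixed points, one cannot use the clean unbranched argument but must invoke the more delicate half of Lemma~\ref{lem_involutionMCG} (the capping diagram), and one must take care that it is \emph{half}-twists about arcs through branch points, rather than squared Dehn twists, that lift the $t_{d_i}$. Second, pinning down the exact downstairs braid relation and matching each of its generators to the specific curve in Figure~\ref{fig_scc_matsumoto} is the fiddly bookkeeping that carries the real content. A useful sanity check is to cap off the two boundary components: the identity should then collapse to a Hurwitz-equivalent form of Matsumoto's classical relation $(t_{d_4}t_{d_3}t_{d_2})^2 t_{d_+}t_{d_-} = 1$ in $\Mod(\Sigma_2)$, which both confirms the count of eight vanishing cycles and identifies $\delta_1, \delta_2$ with the two exceptional sections of the resulting genus-$2$ pencil.
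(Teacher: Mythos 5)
Your general mechanism (equivariant representatives, descent to a quotient, injectivity from Lemma~\ref{lem_involutionMCG}, pushforward along the inclusion) is indeed the engine of the paper's proof, but the specific reduction you propose --- a \emph{single} involution $\eta$ on $\Sigma_2^2$ swapping $\delta_1,\delta_2$ with genus-zero quotient --- cannot be carried out, and the failure is visible already in the abelianization of the target group. By Riemann--Hurwitz such an $\eta$ has six fixed points and quotient a disk $D$ with six branch points; since your upstairs representatives must fix $\partial\Sigma_2^2$ pointwise, the descended relation would live in $\Mod_{\partial D}(D;\{w_1,\ldots,w_6\})\cong B_6$, whose abelianization $B_6\to\Z$ sends every half-twist to $1$, every Dehn twist about a curve enclosing $k$ marked points to $k(k-1)$, and the boundary twist (which is forced to be the image of $t_{\delta_1}t_{\delta_2}$) to $30$. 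Now test your dictionary: $d_+\cup d_-$ bounds the genus-one chain neighborhood $N\cong\Sigma_1^2$, so if $\eta$ swaps $d_+,d_-$ their common image must enclose exactly four branch points (Riemann--Hurwitz applied to $N$), contributing $12$; with your claim that each $t_{d_i}$ descends to a half-twist, the left side has exponent sum $2\cdot 3+12=18\neq 30$. Moreover, the half-twist claim is false for $d_4$: it is the separating curve, and an invariant curve through a fixed point has its two local sides interchanged by $\eta$, which is impossible when the two sides of $d_4$ are non-homeomorphic; then $t_{d_4}$ descends to the \emph{square} of a Dehn twist about a curve enclosing three branch points, driving the count to $2(1+1+12)+12=40\neq 30$. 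Checking all remaining equivariant groupings (swapped versus preserved blocks, free versus through fixed points, with enclosed branch-point numbers pinned down by Riemann--Hurwitz in each case) one finds no assignment whose exponent sum is $30$. So the braid relation you would need to verify downstairs is simply false, and injectivity has nothing to lift. Your sanity check does not detect this because after capping both boundary components the quotient group is the mapping class group of a marked \emph{sphere}, whose abelianization is finite: Matsumoto's unframed relation in $\Mod(\Sigma_2)$ does admit a hyperelliptic-quotient proof, but the framed refinement --- pinning down $t_{\delta_1}t_{\delta_2}$ exactly, which is the whole content of the lemma --- obstructs any one-step genus-zero descent.

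The paper avoids this by never putting an involution on $\Sigma_2^2$ itself. It works on $\Sigma_2^4$ with a tower of two involutions (Figure~\ref{fig_scc_matsumoto_quotient}): $\kappa$ on $\Sigma_2^4$ with two fixed points and quotient $\Sigma_1^2$, then $\iota$ on $\Sigma_1^2$ with four fixed points and quotient $\Sigma_0^1$. The bottom identity is a \emph{point-pushing} relation, not a half-twist relation, and its successive lifts \eqref{eq_genus1_matsumoto} and \eqref{eq_genus2_matsumoto} carry auxiliary negative twists $t_{\Gamma_1}^{-1}t_{\Gamma_2}^{-1}$ together with squared boundary twists $t_{\delta_i}^2$; these correction terms are exactly what reconcile the bookkeeping that blocks your approach. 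Only at the last step are two of the four boundary components capped off, turning each $t_{\Gamma_i}^{-1}$ into $t_{\delta_i}^{-1}$ and producing the clean right-hand side $t_{\delta_1}t_{\delta_2}$. If you want a symmetry proof, you must follow this pattern: enlarge the number of boundary components and admit auxiliary curves with negative twists, rather than descend the framed relation in one step.
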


\begin{proof}

We regard $\Sigma_2^4$ as a subsurface of $\Sigma_2^2$ in the obvious way. 
We take simple closed curves $d_1, \Gamma_i\subset \Sigma_2^4$, a point $u_i\in \partial \Sigma_2^4$ and involutions $\kappa$ and $\iota$ as shown in Figure~\ref{fig_scc_matsumoto_quotient}.

\begin{figure}[htbp]
\includegraphics[width=110mm]{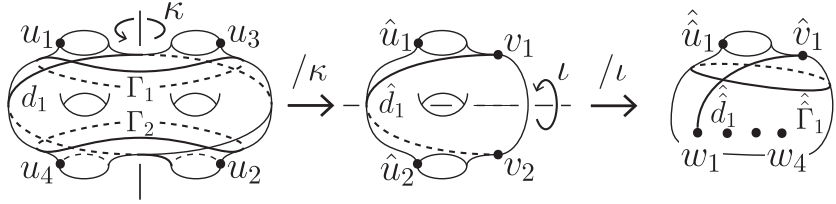}
\caption{Symmetries on surfaces. }
\label{fig_scc_matsumoto_quotient}
\end{figure}
Denote a simple closed curve in $\Sigma_2^4$ parallel to the component of $\partial\Sigma_2^4$ containing $u_i$ by $\delta_i$ and fixed points of $\kappa$ (resp.~$\iota$) by $v_i$ (resp.~$w_i$). 
We add a hat to any symbols to describe the image of quotient maps. 
Let $\alpha_i\subset \Sigma_0^1\setminus \{w_1,\ldots,w_4\}$ be a loop with the base point $\hat{v}_1$ obtained by connecting $\hat{\hat{d}}_i$ with a counterclockwise circle around $w_i$. 
The following equality holds in $\Mod_{\partial \Sigma_0^1}(\Sigma_0^1, \{w_1,\ldots,w_4\})$: 
\[
\Push(\alpha_4)\Push(\alpha_3)\Push(\alpha_2)\Push(\alpha_1) = t_{\hat{\hat{\Gamma}}_1}^{-1} t_{\hat{\hat{\delta}}_1}. 
\]
By Lemma~\ref{lem_involutionMCG}, the following homomorphism is injective:
\[
\iota_\ast \colon \pi_0(C_{\partial \Sigma_1^2}(\Sigma_1^2;\iota)) \to \Mod_{\partial \Sigma_0^1}(\Sigma_0^1;\{w_1,\ldots,w_4 \}).
\]
It is easy to verify (by using Alexander's lemma, for example) the following equalities holds: 
\[
\Push(\alpha_i)=\iota_\ast(\tau_{\hat{d}_i}), \hspace{.3em} t_{\hat{\hat{\Gamma}}_1} = \iota_\ast(t_{\hat{\Gamma}_1}t_{\hat{\Gamma}_2}), \hspace{.3em}
t_{\hat{\hat{\delta}}_1} = \iota_\ast (t_{\hat{\delta}_1}t_{\hat{\delta}_2}). 
\]
Thus we obtain the following relation in $\pi_0(C_{\partial \Sigma_1^2}(\Sigma_1^2;\iota))$: 
\begin{equation}\label{eq_genus1_matsumoto}
\tau_{\hat{d}_4}\tau_{\hat{d}_3}\tau_{\hat{d}_2}\tau_{\hat{d}_1} = t_{\hat{\Gamma}_1}^{-1}t_{\hat{\Gamma}_2}^{-1} t_{\hat{\delta}_1}t_{\hat{\delta}_2}. 
\end{equation}
Using the inclusion $C_{\partial \Sigma_1^2}(\Sigma_1^2;\iota) \hookrightarrow \Diff^+_{\partial \Sigma_1^2}(\Sigma_1^2;\{v_1,v_2\})$ we obtain the same relation as \eqref{eq_genus1_matsumoto} in $\Mod_{\partial \Sigma_1^2}(\Sigma_1^2;\{v_1,v_2\})$. 

The involution $\kappa$ induce the following homomorphism which is injective by Lemma~\ref{lem_involutionMCG}: 
\[
\kappa_\ast \colon\pi_0(C_{\partial\Sigma_2^4}(\Sigma_2^4;\kappa)) \to \Mod_{\partial\Sigma_1^2}(\Sigma_1^2;\{v_1,v_2\}). 
\]
It is easy to see that the images of the mapping classes $t_{d_i}, t_{\Gamma_i}$ and $t_{\delta_i}t_{\delta_{i+2}}$ under the homomorphism $\kappa_\ast$ are $\tau_{\hat{d}_i}, t_{\hat{\Gamma}_i}^2$ and $t_{\hat{\delta}_i}$, respectively. 
Thus we obtain the following relation in $\pi_0(C_{\partial \Sigma_2^4}(\Sigma_2^4; \kappa))$: 
{\allowdisplaybreaks
\begin{align}
& (t_{d_4}t_{d_3}t_{d_2}t_{d_1})^2 = t_{\Gamma_1}^{-1}t_{\Gamma_2}^{-1}t_{\delta_1}^2t_{\delta_2}^2t_{\delta_3}^2t_{\delta_4}^2 \nonumber \\
\Leftrightarrow & (t_{d_4}t_{d_3}t_{d_2})^2 ((t_{d_4}t_{d_3}t_{d_2})^{-1}t_{d_1}(t_{d_4}t_{d_3}t_{d_2})) t_{d_1} = t_{\Gamma_1}^{-1}t_{\Gamma_2}^{-1}t_{\delta_1}^2t_{\delta_2}^2t_{\delta_3}^2t_{\delta_4}^2. \label{eq_genus2_matsumoto} 
\end{align}
}
We can obtain the same relation as \eqref{eq_genus2_matsumoto} via the inclusion $C_{\partial \Sigma_2^4}(\Sigma_2^4;\kappa) \hookrightarrow \Diff^+_{\partial \Sigma_2^4}(\Sigma_2^4)$. 
Let $C\colon \Mod_{\partial \Sigma_2^4}(\Sigma_2^4)\to \Mod_{\partial \Sigma_2^2}(\Sigma_2^2)$ be a homomorphism obtained by capping the components of $\partial \Sigma_2^4$ containing $u_3$ and $u_4$ with punctured disks. 
It is easy to see that the image of the left (resp.~right) side of the equality~\eqref{eq_genus2_matsumoto} under $C$ is equal to the left (resp.~right) side of the equality in the statement. 
This completes the proof of Lemma~\ref{lem_lift_matsumoto}. 
\end{proof}

Let $c_2, c_3, c_4\subset \Sigma_2^2$ be simple closed curves shown in Figure~\ref{fig_scc_genus2surface_2}. 
\begin{figure}[htbp]
\includegraphics[width=30mm]{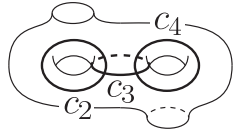}
\caption{Simple closed curves in $\Sigma_2^2$.}
\label{fig_scc_genus2surface_2}
\end{figure}
By the chain relation (see \cite[Proposition 4.12]{Farb_Margalit_2011}) and Lemma~\ref{lem_lift_matsumoto} we obtain the following relation in $\Mod_{\partial \Sigma_2^2}(\Sigma_2^2)$: 
\[
(t_{d_4}t_{d_3}t_{d_2})^2 (t_{c_2}t_{c_3}t_{c_4})^4 = t_{\delta_1}t_{\delta_2}. 
\]
This relation gives rise to a genus-$2$ Lefschetz fibration $h\colon Y \rightarrow S^2$ with two \linebreak $(-1)$-sphere sections.

\begin{proposition}\label{lem_pi1_genus2LF}
$(Y,g)$ is a genus-$2$ symplectic Calabi-Yau Lefschetz fibration, where the minimal model of $Y$ is homeomorphic to an Enriques surface.
\end{proposition}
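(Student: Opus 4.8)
The plan is to follow the template of Proposition~\ref{lem_signature_genus3LF}: extract $\eu(Y)$, $\pi_1(Y)$ and $\sigma(Y)$ from the factorization $(t_{d_4}t_{d_3}t_{d_2})^2(t_{c_2}t_{c_3}t_{c_4})^4 = t_{\delta_1}t_{\delta_2}$, and then feed the result into Theorem~\ref{SCYLF} and the classification of minimal symplectic Calabi--Yau $4$-manifolds. The factorization has $m = 18$ Lefschetz singularities, so $\eu(Y) = 4 - 4g + m = 14$ for $g = 2$. Since $h$ carries the two $(-1)$-sphere sections corresponding to the boundary components $\delta_1,\delta_2$, it has a section, whence $\pi_1(Y) \cong \pi_1(\Sigma_2)/\mathcal{C}$, with $\mathcal{C}$ the normal closure of the six distinct vanishing cycles $d_2, d_3, d_4, c_2, c_3, c_4$.

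The first substantive step is to compute this quotient. Reading the homotopy classes of the vanishing cycles off Figures~\ref{fig_scc_matsumoto} and~\ref{fig_scc_genus2surface_2} as words in a standard basis of $\pi_1(\Sigma_2)$, I would simplify the presentation of $\pi_1(\Sigma_2)/\mathcal{C}$. In contrast to the genus-$3$ case, where a subcollection of vanishing cycles normally generated all of $\pi_1(\Sigma_3)$ and forced $\pi_1(X) = 1$, here I expect the quotient to be \emph{exactly} $\mathbb{Z}/2\mathbb{Z}$, matching the fundamental group of the Enriques surface. Establishing that the quotient is $\mathbb{Z}/2\mathbb{Z}$ --- neither trivial nor larger --- is the delicate bookkeeping, and it is the essential Enriques-specific input of the argument.

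For the signature I would exploit that, in genus $2$, the hyperelliptic involution is central, so that every mapping class is hyperelliptic and Endo's local signature formula \cite{Endo_2000} (built on Meyer's cocycle \cite{Meyer}) applies directly to every vanishing cycle, with no need to first pass to a hyperelliptic subgroup as was required in genus $3$. Under this formula a nonseparating vanishing cycle contributes $-\tfrac{3}{5}$ and a separating one contributes $-\tfrac{1}{5}$. Identifying from the figures that $16$ of the $18$ vanishing cycles are nonseparating and $2$ are separating --- the latter being the separating cycles inherited from the Matsumoto relation underlying Lemma~\ref{lem_lift_matsumoto} --- I obtain $\sigma(Y) = -\tfrac{3}{5}\cdot 16 - \tfrac{1}{5}\cdot 2 = -10$. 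Together with $\eu(Y) = 14$ and $b_1(Y) = 0$ this gives $b^+(Y) = 1$.

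Finally I would assemble the conclusion, where the decisive difference from the $\K$ case is that $b^+(Y) = 1$ makes the shortcut ``use $b^+ > 1$ to exclude rational and ruled surfaces'' unavailable. Instead I would note that $\pi_1(Y) = \mathbb{Z}/2\mathbb{Z}$ is finite and nontrivial, hence not the fundamental group of a rational (simply connected) or ruled (base-surface group) surface, so $Y$ is neither. The two $(-1)$-sphere sections are $1$-sections with $\sum_j n_j = 2 = 2g - 2$, so the converse direction of Theorem~\ref{SCYLF} --- which covers $b^+ = 1$ via Li--Liu \cite{LiLiu} --- shows that $(Y, \omega)$ is a blow-up of a minimal symplectic Calabi--Yau $Y_{\min}$. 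Blowing down the two exceptional spheres preserves $\pi_1$ and yields $\eu(Y_{\min}) = 12$, $\sigma(Y_{\min}) = -8$, hence $b^+(Y_{\min}) = 1$, with $\pi_1(Y_{\min}) = \mathbb{Z}/2\mathbb{Z}$. By the classification of minimal symplectic Calabi--Yau $4$-manifolds up to rational homology \cite{Li1, Li2, Bauer}, $Y_{\min}$ has the rational homology type of a torus bundle over a torus, of the $\K$, or of the Enriques surface; the first is excluded by $b_1 = 0$ and the second by $b^+ = 1$, leaving the Enriques surface, whose homeomorphism type is then singled out by the classification of closed oriented $4$-manifolds with $\pi_1 = \mathbb{Z}/2\mathbb{Z}$. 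I expect the computation of $\pi_1(Y)$ to be the main obstacle: it carries the Enriques-specific content, and since $b^+(Y) = 1$ the entire identification hinges on pinning it down to $\mathbb{Z}/2\mathbb{Z}$.
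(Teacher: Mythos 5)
Up to the last step, your route is the paper's own: $\eu(Y)=14$; the genus-$2$ hyperelliptic signature formula (the paper cites Matsumoto's original genus-$2$ formula rather than Endo's hyperelliptic generalization, but the arithmetic is identical) applied to sixteen nonseparating cycles and the two copies of the separating curve $d_4$ coming from the Matsumoto-type part of the factorization, giving $\sigma(Y)=-10$; the presentation computation giving $\pi_1(Y)\cong \Z/2\Z$ (the paper carries this out and gets exactly $\langle \alpha \mid \alpha^2\rangle$); exclusion of rational and ruled surfaces via the fundamental group; and Theorem~\ref{SCYLF} to see that $Y$ is a blow-up of a minimal symplectic Calabi--Yau $Y'$. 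Your accounting of the blow-down (two exceptional sections, so $\eu(Y')=12$, $\sigma(Y')=-8$, $b^+(Y')=1$) is the numerically consistent one, and Li--Bauer then pins down the \emph{rational} homology type of $Y'$ as that of the Enriques surface.

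The genuine gap is your final sentence. The ``classification of closed oriented $4$-manifolds with $\pi_1=\Z/2\Z$'' is Hambleton--Kreck \cite[Theorem C]{HambletonKreck}, and its input is not the rational homology type: it classifies by the integral intersection form on $H_2$ modulo torsion, the $w_2$-type \cite{HambletonKreck_1988}, and the Kirby--Siebenmann invariant. Rational homology plus $\pi_1$ do \emph{not} determine the homeomorphism type: the manifold $\CP \# 9\,\CPb \# Q$, where $Q=(S^2\times S^2)/\Z_2$ is the rational homology $4$-sphere obtained from the free involution acting antipodally on both factors, has $\pi_1=\Z/2\Z$, $b^+=1$, $b^-=9$, $\sigma=-8$, yet has odd intersection form and non-spin universal cover, so it is not homeomorphic to the Enriques surface. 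Closing this gap is precisely where the paper works after the $\pi_1$ computation: (i) $w_2(Y')\neq 0$ because $\sigma(Y')=-8$ is not divisible by $16$ (Rokhlin); (ii) the universal double cover $\widetilde{Y'}$ is spin, because the SCY property makes the canonical class of $Y'$ torsion, its pullback lies in $H^2(\widetilde{Y'};\Z)$ which is torsion-free by simple connectivity, hence vanishes, so $w_2(\widetilde{Y'})=0$. This places $Y'$ in the same $w_2$-type as the Enriques surface (and the torsion canonical class together with Wu's formula also forces the intersection form to be even, hence the unique even indefinite form of rank $10$ and signature $-8$). Only with these inputs does the Hambleton--Kreck theorem apply. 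So the genuinely Enriques-specific content is this $w_2$-type argument, rather than the $\pi_1$ computation you flag as the main obstacle; without it your citation of ``the classification'' does not yield the conclusion.
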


\begin{proof}

Since $h$ has $18$ Lefschetz singularities, $\eu(Y)=14$. Thanks to genus-$2$ mapping class group being hyperelliptic, we can also calculate $\Sign(Y)$ easily by using Matsumoto's signature formula \cite[Theorem 3.3(2), Proposition 3.6]{Matsumoto3} as 
\[
\Sign(Y) = 16\left(-\frac{3}{5}\right) + 2\left(-\frac{1}{5}\right) = -10. 
\]

The calculation that is more involved this time is that of $\pi_1(Y)$, since the vanishing cycles of $(Y,h)$ do not kill all the generators of the fundamental group of the fiber, $\pi_1(\Sigma_2)$. 
%
To calculate $\pi_1(Y)$, let us take oriented based loops $\alpha_i, \beta_i$ in $\Sigma_2$ as shown in Figure~\ref{fig_generator_genus2_pi1}. 
\begin{figure}[htbp]
\begin{center}
\includegraphics[width=40mm]{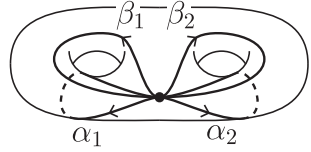}
\end{center}
\caption{A generator of $\pi_1(\Sigma_2)$. }
\label{fig_generator_genus2_pi1}
\end{figure}
We also use the symbols $\alpha_i, \beta_i$ to represent the homotopy classes of them. 
The fundamental group $\pi_1(\Sigma_2)$ has the presentation $\left<\alpha_1,\beta_1,\alpha_2,\beta_2~|~[\alpha_1,\beta_1][\beta_2,\alpha_2]\right>$. 
The curves $c_2, c_3$ and $c_4$ are homotopic to the curves $\beta_1, \alpha_1\overline{\alpha_2}$ and $\beta_2$, respectively, where $\overline{\alpha_2}$ is the based loop $\alpha_2$ with the opposite orientation. 
It is easy to see that the curves $d_2, d_3$ and $d_4$ are homotopic to $\alpha_1\overline{\beta_1}\beta_2\alpha_2, \alpha_1\beta_2\alpha_2\overline{\beta_2}$ and $[\alpha_1,\beta_1]$, respectively. 
Since the fibration $h$ has a section the fundamental group of $Y$ is calculated as follows:
{\allowdisplaybreaks
\begin{align*}
\pi_1(Y) & \cong \pi_1(\Sigma_2)/\left<c_2,c_3,c_4,d_2,d_3,d_4\right> \\
& \cong \left<\alpha_1,\beta_1,\alpha_2,\beta_2 \left|[\alpha_1,\beta_1][\beta_2,\alpha_2],\beta_1, \alpha_1\overline{\alpha_2}, \beta_2, \alpha_1\overline{\beta_1}\beta_2\alpha_2, \alpha_1\beta_2\alpha_2\overline{\beta_2},[\alpha_1,\beta_1] \right.\right> \\
& \cong \left<\alpha_1,\alpha_2 \left|\alpha_1\overline{\alpha_2}, \alpha_1\alpha_2 \right.\right> \\
& \cong \left<\alpha \left|\alpha^2 \right.\right> \cong \mathbb{Z}/2\mathbb{Z}. 
\end{align*}
}
Now, since $Y$ is not rational or ruled, it is a symplectic Calabi-Yau, and has only one exceptional sphere. Blowing it down, we arrive at the minimal symplectic $Y'$. Since its signature  is not divisible by $16$, $w_2(Y')$ does not vanish. 
It is easily seen that the universal cover $\widetilde{Y'}$ of $Y'$ has a torsion canonical class, and is trivial since $H^2(\widetilde{Y'})$ has no torsion. Since the modulo $2$ reduction of the canonical class coincides with the second Stiefel-Whitney class, $w_2(\widetilde{Y'})$ vanishes. 
Thus, $Y'$ has the same $w_2$--type \cite{HambletonKreck_1988} as that of the Enriques surface. By \cite[Theorem C]{HambletonKreck}, we conclude that $Y'$ is homeomorphic to the Enriques surface.

\end{proof}

As we are unable to detect whether the total spaces of our pencils are \textit{diffeomorphic} to the $\K$ and \textit{the} Enriques surfaces, we finish with highlighting this question: 

\begin{question}
Are the symplectic Calabi-Yau manifolds $X$ and $Y$ we have constructed above diffeomorphic to blow-ups of the $\K$ and the Enriques surfaces, respectively? 
\end{question}


\vspace{0.2in}
\section{Fiber sum indecomposability and Stipsicz's conjecture} \label{Stipsicz} 

A common way of constructing new Lefschetz fibrations from given ones is the \emph{fiber sum} operation, defined as follows: Let $(X_i, f_i)$, $i=1,2$, be genus-$g$ Lefschetz fibrations with regular fiber $F$. The \emph{fiber sum} \, $(X_1, f_1)\#_{F, \Phi}(X_2,f_2)$ is a genus-$g$ Lefschetz fibration obtained by removing a fibered tubular neighborhood of a regular fiber from each $(X_i, f_i)$ and then identifying the resulting boundaries via a fiber-preserving, orientation-reversing diffeomorphism $\Phi$. In terms of monodromy factorizations, this translates to a monodromy factorization with a proper subfactorization of the identity in the mapping class group of $F$. A Lefschetz fibration $(X,f)$ is called \emph{fiber sum indecomposable} (or \emph{indecomposable} in short) if it cannot be expressed as a fiber sum of any two nontrivial Lefschetz fibrations. These can be regarded as prime building blocks of Lefschetz fibrations.

Stipsicz in \cite{Stipsicz}, and Smith in \cite{Smith} independently proved that Lefschetz fibrations admitting $(-1)$-sphere sections are fiber sum indecomposable. Moreover, Stipsicz conjectured in the same work that the converse is also true, that is, every fiber sum indecomposable Lefschetz fibration contains a $(-1)$-sphere section, an affirmative answer to which would suggest blow-ups of Lefschetz pencils as the elementary building blocks of Lefschetz fibrations through fiber sums. However, Sato showed that a genus-$2$ Lefschetz fibration constructed by Auroux on a once blown-up minimal symplectic $4$-manifold provided a counter-example to this conjecture, by showing that the exceptional class could only be represented by a $2$-section of this fibration \cite{Sato_2008}. 

Here we will show that Auroux's genus-$2$ fibration is not a mere exception, by constructing further counter-examples to Stipsicz's conjecture via explicit monodromy factorizations of Lefschetz fibrations with their multisections. To be able to detect how all exceptional classes lie with respect to a Lefschetz fibration $(X, f)$ (and that none is a section), we will work with Lefschetz fibrations on blow-ups of symplectic Calabi-Yau $4$-manifolds, which are the perfect fit to our purposes because of two reasons: any symplectic Calabi-Yau $(X,f)$ is fiber sum indecomposable (\cite{BaykurDecomp, Usher}), and, as we have reviewed earlier, each $2$-sphere $S_j$ representing an exceptional class is an $s_j$-section of $(X, f)$, where the \textit{inequality} $\sum s_j = (\sum S_j) \cdot F \leq 2g-2$ in this case is sharp. The strategy of our proof then boils down to constructing an explicit monodromy factorization for a symplectic Calabi-Yau $(X,f)$ detecting all exceptional multisections, and then applying monodromy substitutions which turn all exceptional classes to $s_j$-sections with $s_j \geq 2$ for all $j$. This monodromy substitution comes from a generalization of the lantern relation to the framed mapping class group, which we present next.

\vspace{0.15cm}
\subsection{A lantern relation for multisections}\label{braidedlantern} \

We generalize the lantern relation to that in the mapping class group with commutative boundary components: 

\begin{lemma}[Braiding lantern relation] \label{lem_lantern_relation}

Let the curves $a,b,c,d,x, \delta_1, \delta_2$, pairs of arcs $y,z$ and points $u_1, u_2$ in $\Sigma_0^6$ be as shown in Figure~\ref{fig_lantern_relation}, where $a,b,c,d,\delta_1,\delta_2$ are parallel to boundary components. Denote the boundary components parallel to $\delta_i$ by $S_i$. The following relation holds in $\Mod_{\partial \Sigma_0^6\setminus (S_1\sqcup S_2)}(\Sigma_0^6; \{u_1,u_2\})$: 
\[
\widetilde{t_z} t_{x} \widetilde{t_{y}} = t_a t_b t_c t_d t_{\delta_2}. 
\]
\begin{figure}[htbp]
\begin{center}
\includegraphics[width=35mm]{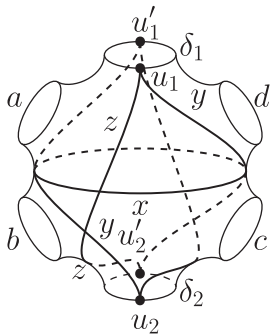}
\end{center}
\caption{Curves in $\Sigma_0^6$. }
\label{fig_lantern_relation}
\end{figure}
\end{lemma}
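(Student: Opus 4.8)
The plan is to establish this as an identity in the framed mapping class group by the same mechanism used for the substitution \eqref{eq_substitution} in Remark~\ref{rem_perturbation}: a direct verification via the Alexander method \cite{Farb_Margalit_2011}, after first reducing the braided lifts to explicit normal forms. Both sides fix the four boundary components parallel to $a,b,c,d$ (i.e.\ $\partial \Sigma_0^6\setminus(S_1\sqcup S_2)$) pointwise and preserve $\{u_1,u_2\}$. Since $\widetilde{t_y}$ and $\widetilde{t_z}$ each interchange $u_1$ and $u_2$ (Lemma~\ref{lem_liftmonodromy2}, Figure~\ref{lift_monodromy2}) while $t_x$ and every factor on the right fixes each $u_i$, the composite $\widetilde{t_z}\,t_x\,\widetilde{t_y}$ (applied right-to-left) restores each marked point, matching the right-hand side. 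Thus both sides are elements of $\Mod_{\partial \Sigma_0^6\setminus (S_1\sqcup S_2)}(\Sigma_0^6; \{u_1,u_2\})$ that induce the trivial permutation of $\{u_1,u_2\}$, so it suffices to show they act identically.

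First I would fix a filling system of properly embedded arcs in $\Sigma_0^6$, with endpoints on the fixed boundary and on $u_1,u_2$, which together with the requirement of fixing $\partial\Sigma_0^6\setminus(S_1\sqcup S_2)$ pointwise determines a framed mapping class uniquely up to isotopy; this is exactly the content of the Alexander method invoked in Remark~\ref{rem_perturbation}. I would then record the image of each arc under the right-hand side $t_a t_b t_c t_d t_{\delta_2}$, where only the arcs entering the relevant boundary collars are affected, each acquiring a predictable boundary twisting.

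Next, reading $\widetilde{t_y}$ and $\widetilde{t_z}$ through their pictures in Figure~\ref{lift_monodromy2} and applying the local normal form \eqref{eq_substitution}, I would decompose each braided lift into an honest Dehn twist, a half-twist interchanging $u_1$ and $u_2$ along the corresponding arc of $y$ (respectively $z$), and a correcting boundary twist about the component carrying $u_2$, namely $S_2$. Substituting these into $\widetilde{t_z}\,t_x\,\widetilde{t_y}$ turns the left-hand side into a word in ordinary Dehn twists, half-twists and boundary twists, whose half-twist parts compose to the identity permutation of $\{u_1,u_2\}$ and whose interior Dehn-twist parts are the three handle curves of an embedded four-holed sphere spanned by $a,b,c,d$. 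Applying the classical lantern relation on that subsurface and, after collecting the boundary corrections, should yield the stated identity, which I would confirm arc-by-arc with the Alexander method.

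The main obstacle will be the boundary-twist bookkeeping: verifying that the two interchanges of $u_1,u_2$ cancel precisely against the central $t_x$, and that the correcting twists produced when normalizing the two braided lifts (each a power of $t_{\delta_2}$ about $S_2$, with none about $S_1$) combine with the lantern output into \emph{exactly one} factor $t_{\delta_2}$, with the correct sign and multiplicity. As in the proofs of Lemmas~\ref{lem_relation_genus3} and~\ref{lem_lift_matsumoto}, an alternative route is to realize the whole identity as the $\eta_\ast$-image, through the injective quotient homomorphism of Lemma~\ref{lem_involutionMCG}, of the ordinary lantern relation on a quotient four-holed sphere, with $\widetilde{t_y},\widetilde{t_z}$ appearing as the equivariant lifts of the two lantern handles that wind around the branch data; this trades the arc chasing for the task of pinning down the equivariant structure and the boundary multiplicities.
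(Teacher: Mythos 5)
Both of your routes stop short of the actual content of the lemma, and the step you defer as ``the main obstacle'' is precisely what the lemma asserts. In your primary route, the pivotal move --- decomposing $\widetilde{t_y},\widetilde{t_z}$ via \eqref{eq_substitution} and then ``applying the classical lantern relation on that subsurface'' --- is not available: there is no embedded four-holed sphere in $\Sigma_0^6$ whose interior twist curves are the Dehn-twist parts of your decomposition and whose boundary is $a,b,c,d$, with the leftover half-twist and boundary-twist factors taking care of themselves. The relation only becomes a classical lantern after passing to a quotient; inside $\Mod_{\partial \Sigma_0^6\setminus (S_1\sqcup S_2)}(\Sigma_0^6;\{u_1,u_2\})$ it is a genuinely new relation, which is the point of the lemma. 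Even in the best case your decomposition reduces the statement to another identity of the same framed type (a product of two lifted half-twists equal to a product of boundary and interior twists), not to the classical lantern. Consequently the bookkeeping you postpone --- that the framing corrections assemble into \emph{exactly one} $t_{\delta_2}$ and no twist about $S_1$ --- would have to carry the entire proof, arc by arc, with no structural input. Also, the remark that the two interchanges of $u_1,u_2$ ``cancel precisely against the central $t_x$'' is not meaningful: $t_x$ is supported away from the marked points and commutes with boundary twists; what must be computed is the product of the two lifted half-twists, which is a nontrivial framed class.

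Your alternative route is the paper's actual mechanism, but as sketched it cannot be run with a single quotient. The $\pi$-rotation $\eta$ of $\Sigma_0^6$ has quotient $\Sigma_0^4$, and there $\eta_\ast$ sends $\widetilde{t_y},\widetilde{t_w}$ to \emph{half-twists} $\tau_{\hat y},\tau_{\hat w}$ interchanging two framed boundary components, and sends $t_x$ to $t_{\hat x}^2$; these are still not Dehn twists, so the ordinary lantern relation does not yet apply on that four-holed sphere. The paper invokes Lemma~\ref{lem_involutionMCG} \emph{twice}: a second, branched involution $\iota$ on $\Sigma_0^4$ has quotient an annulus with two marked points, where the honest lantern \eqref{eq_Lantern_relation_quotient1}, $t_{W}t_{X}t_{Y}=t_{\hat{\hat{a}}}t_{\hat{\hat{\delta}}_1}$, holds; injectivity of $\iota_\ast$ lifts it to the half-twist relation \eqref{eq_lantern_relation_quotient2}, and injectivity of $\eta_\ast$ lifts that to $\widetilde{t_w}\,t_x\,\widetilde{t_y}=t_at_bt_ct_d\,t_{\delta_1}^{1/2}t_{\delta_2}^{1/2}$. (Note the direction: one checks that the two sides upstairs have equal images and uses injectivity; the identity is not the ``$\eta_\ast$-image'' of the lantern.) This symmetric relation necessarily lives in the group with four marked points $\{u_1,u_1',u_2,u_2'\}$, since square roots of boundary twists appear; the stated asymmetric relation with a single $t_{\delta_2}$ is then extracted by the conjugation identities $\widetilde{t_y}\,t_{\delta_1}^{-1/2}=t_{\delta_2}^{-1/2}\,\widetilde{t_y}$ and $\widetilde{t_z}=t_{\delta_2}^{1/2}\,\widetilde{t_w}\,t_{\delta_2}^{-1/2}$. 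This square-root manipulation is exactly the answer to the multiplicity question you left open, and it is missing from both of your routes.
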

\begin{proof}
Let $\eta$ be an involution of $\Sigma_0^6$ defined as the $\pi$-degree rotation as shown in Figure~\ref{fig_lantern_relation_quotient}. 
\begin{figure}[htbp]
\begin{center}
\includegraphics[width=125mm]{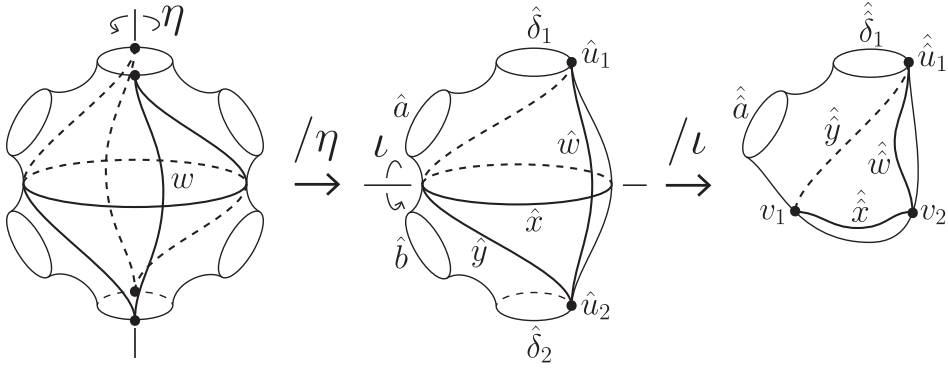}
\end{center}
\caption{The quotient map $/\eta:\Sigma_0^6 \to \Sigma_0^6/\eta \cong \Sigma_0^4$. }
\label{fig_lantern_relation_quotient}
\end{figure}
The quotient space $\Sigma_0^6/\eta$ is homeomorphic to the surface $\Sigma_0^4$. 
We take a pair of arcs $w$ in $\Sigma_0^6$ as shown in Figure~\ref{fig_lantern_relation_quotient}. 
Let $\hat{w}\subset \Sigma_0^4$ be the image of $w$ under the quotient map $/\eta$. 
We define other symbols in the same way (see Figure~\ref{fig_lantern_relation_quotient}). 

We further take an involution $\iota$ on $\Sigma_0^4$ as shown in Figure~\ref{fig_lantern_relation_quotient}. 
The quotient space $\Sigma_0^4/\iota$ is homeomorphic to the surface $\Sigma_0^2$. 
Denote the image of the fixed points of $\iota$ under the quotient map $/\iota: \Sigma_0^4\to \Sigma_0^2$ by $v_1, v_2$. 
We use double-hatted symbols to describe images of curves and arcs in $\Sigma_0^4$ under the map $/\iota$ (see Figure~\ref{fig_lantern_relation_quotient}).  
We denote by $\hat{\hat{S}}$ the boundary component of $\Sigma_0^2$ parallel to the curve $\hat{\hat{\delta}}_1$. 
Let $Y$ and $W$ be simple closed curves in $\Sigma_0^2$ which bound regular neighborhoods of the unions $\hat{\hat{y}}\cup \hat{\hat{S}}$ and $\hat{\hat{w}}\cup \hat{\hat{S}}$, respectively, and $X$ a simple closed curve which bounds the arc $\hat{\hat{x}}$. 
By the lantern relation \cite[Proposition 5.1]{Farb_Margalit_2011}, we obtain the following relation in $\Mod_{\partial \Sigma_0^2}(\Sigma_0^2; v_1, v_2)$: 
\begin{align}
& t_{W} t_{X} t_{Y} = t_{\hat{\hat{a}}}t_{\hat{\hat{\delta}}_1}. \label{eq_Lantern_relation_quotient1}
\end{align}
%
Consider the following homomorphism induced by the quotient map $/\iota:\Sigma_0^4 \to \Sigma_0^2$: 
\[
\iota_\ast : \pi_0(C_{\partial \Sigma_0^4\setminus (\hat{S}_1\sqcup \hat{S}_2)}(\Sigma_0^4;\iota)) \to \Mod_{\partial \Sigma_0^2}(\Sigma_0^2; v_1, v_2). 
\]
By Lemma~\ref{lem_involutionMCG} this homomorphism is injective. 
Since the paths $\hat{y}, \hat{w}$ and a loop $\hat{x}$ are invariant under $\iota$, we can regard $\tau_{\hat{y}}, \tau_{\hat{w}}$ and $t_{\hat{x}}$ as elements in $\pi_0(C_{\partial \Sigma_0^4\setminus (\hat{S}_1\sqcup \hat{S}_2)}(\Sigma_0^4;\iota))$, where $\tau_{\hat{y}}$ (resp. $\tau_{\hat{w}}$) is the half twist along $\hat{y}$ (resp. $\hat{w}$) interchanging the boundary components $\hat{S}_1$ and $\hat{S}_2$. 
It is not hard to see (by using the Alexander method \cite[Section 2.3]{Farb_Margalit_2011}, for example) that the following equalities hold: 
{\allowdisplaybreaks
\begin{align*}
\iota_\ast(\tau_{\hat{y}}) = t_{Y}, \hspace{.4em}
\iota_\ast(\tau_{\hat{w}}) = t_{W}, \hspace{.4em} 
\iota_\ast(t_{\hat{x}}^2) = t_{X}, \\
\iota_\ast(t_{\hat{a}}t_{\hat{b}}) = t_{\hat{\hat{a}}}, \hspace{.4em}
\iota_\ast(t_{\hat{\delta}_1}t_{\hat{\delta}_2}) = t_{\hat{\hat{\delta}}_1}. 
\end{align*}
} %
Thus we obtain the following relation in $\pi_0(C_{\partial \Sigma_0^4\setminus (\hat{S}_1\sqcup \hat{S}_2)}(\Sigma_0^4;\iota))$ from the relation \eqref{eq_Lantern_relation_quotient1}: 
\begin{equation}\label{eq_lantern_relation_quotient2}
\tau_{\hat{w}} t_{\hat{x}}^{2} \tau_{\hat{y}} = t_{\hat{a}}t_{\hat{b}} t_{\hat{\delta}_1} t_{\hat{\delta}_2}.
\end{equation}
Since the inclusion $C_{\partial \Sigma_0^4\setminus (\hat{S}_1\sqcup \hat{S}_2)}(\Sigma_0^4; \iota) \hookrightarrow \Diff^+_{\partial \Sigma_0^4\setminus (\hat{S}_1\sqcup \hat{S}_2)}(\Sigma_0^4; \{u_1,u_2\})$ induce a homomorphism $\pi_0(C_{\partial \Sigma_0^4\setminus (\hat{S}_1\sqcup \hat{S}_2)}(\Sigma_0^4; \iota))\to \Mod_{\partial \Sigma_0^4\setminus (\hat{S}_1\sqcup \hat{S}_2)}(\Sigma_0^4; \{u_1,u_2\})$, we can regard the relation \eqref{eq_lantern_relation_quotient2} as that in $\Mod_{\partial \Sigma_0^4\setminus (\hat{S}_1\sqcup \hat{S}_2)}(\Sigma_0^4; \{u_1,u_2\})$. 

We denote by $t_{\delta_i}^{1/2} \in \Mod_{\partial \Sigma_0^6\setminus (S_1\sqcup S_2)}(\Sigma_0^6; \{u_1,u_1^\prime, u_2, u_2^\prime\})$ the square root of the Dehn twist along $\delta_i$ which fixes outside of a neighborhood of $S_i$ and interchanges the points $u_i$ and $u_i^\prime$. 
In a way quite similar to that in the previous paragraph, we obtain the following relation in $\Mod_{\partial \Sigma_0^6\setminus (S_1\sqcup S_2)}(\Sigma_0^6; \{u_1,u_1^\prime, u_2, u_2^\prime\})$: 
{\allowdisplaybreaks
\begin{align*}
& \widetilde{t_{w}} t_{x} \widetilde{t_{y}} = t_a t_b t_c t_d t_{\delta_1}^{1/2} t_{\delta_2}^{1/2} \\
\Leftrightarrow & t_{\delta_2}^{1/2}\widetilde{t_{w}}  t_{x} \widetilde{t_{y}} t_{\delta_1}^{-1/2} = t_a t_b t_c t_d t_{\delta_2} \\
\Leftrightarrow & (t_{\delta_2}^{1/2}\widetilde{t_{w}}t_{\delta_2}^{-1/2})  t_{x} \widetilde{t_{y}} = t_a t_b t_c t_d t_{\delta_2} \\
\Leftrightarrow & \widetilde{t_{z}} t_{x} \widetilde{t_{y}} = t_a t_b t_c t_d t_{\delta_2}.
\end{align*}
}
This completes the proof of Lemma \ref{lem_lantern_relation}. 
\end{proof}

\begin{remark} 
As the braiding lantern relation allows us to perform a local substitution in a monodromy factorization, it can be used to pass from one Lefschetz fibration to another while braiding two given sheets of (multi)sections. If the boundary components $\delta_1, \delta_2$ correspond to two sections $S_1, S_2$ of self-intersections $s_1$ and $s_2$, the substitution will hand a new Lefschetz fibration with a $2$-section $S_{12}$ which is an embedded $2$-sphere of self-intersection $s_1+s_2+1$. In particular, if $S_1, S_2$ are exceptional classes, so is $S_{12}$, which will play a crucial role in our applications to follow.
\end{remark}

\begin{remark}
Let us sketch a toy example of our braiding lantern substitution: Consider the trivial rational fibration on $S^2 \x S^2$, and blow-up one of the fibers $4$ times so it now consists of a $(-4)$-sphere $V$ and $4$ exceptional spheres. Let $S_1, S_2$ be two disjoint self-intersection $0$ sections of this fibration on $S^2 \x S^2 \# 4 \CPb \cong \CP \#5 \CPb$, each intersecting $V$ once. The braiding lantern substitution along the vanishing cycles and the two boundary components for the sections $S_1, S_2$ amounts to rationally blowing-down $V$, and the result will be a new rational Lefschetz fibration on $\CP \# 4 \CPb$ with $3$ vanishing cycles and a $2$-sphere bi-section $S_{12}$ of self-intersection $+1$. The latter is equivalent to the blow-up of the degree-$2$ pencil on $\CP$, where $S_{12}$ is identified with $\mathbb{CP}^{1} \subset \CP$.
\end{remark}

\vspace{0.15cm}
\subsection{Genus--$3$ counter-examples to Stipsicz's conjecture}\label{genus3counter} \

In the previous section, we have obtained the following monodromy factorization for a genus-$3$ Lefschetz fibration with $4$ $(-1)$-sphere sections on a symplectic Calabi-Yau $\K$ surface: 
\begin{equation*}
(t_{c_1}t_{c_7}t_{c_3}t_{c_5}t_{c_2}t_{c_6}t_{a_1}t_{a_2}t_{b_1}t_{b_2}t_{c_1}t_{c_7}t_{c_3}t_{c_5}t_{b_1}t_{b_2}t_{c_2}t_{c_6})^2 = t_{\delta_1}t_{\delta_2}t_{\delta_3}t_{\delta_4},
\end{equation*}

We will now derive a factorization Hurwitz equivalent to this one. In fact, the motivation behind all of our construction in that section is indeed to arrive at this next configuration containing various lantern curves.

Denote by $y_1, y_2, z_1, z_2$ pairs of arcs shown in Figure~\ref{genus3_lantern}. 
\begin{figure}[htbp]
\begin{center}
\includegraphics[width=80mm]{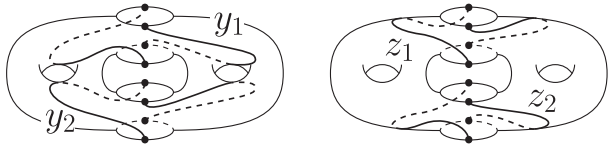}
\end{center}
\caption{Pairs of arcs in $\Sigma_3^4$. }
\label{genus3_lantern}
\end{figure}
By Lemma \ref{lem_lantern_relation}, we obtain the following relations in $\Mod(\Sigma_3^4;\{u_1,u_2,u_3,u_4\})$: 
\begin{align}
\widetilde{t_{z_1}} t_{a_1} \widetilde{t_{y_1}} t_{\delta_1}^{-1} &= t_{c_1}t_{c_3} t_{c_5} t_{c_7}, \label{genus3_relation1}\\
\widetilde{t_{z_2}} t_{a_2}\widetilde{t_{y_2}} t_{\delta_4}^{-1} &= t_{c_1}t_{c_3} t_{c_5} t_{c_7}. \label{genus3_relation2}
\end{align}

The monodromy factorization of $f$ given in Lemma \ref{lem_relation_genus3} can be then changed by elementary transformations as follows: 
{\allowdisplaybreaks
\begin{align}
t_{\delta_4}t_{\delta_3}t_{\delta_2}t_{\delta_1} &= \underbrace{t_{c_1}t_{c_7}t_{c_3}t_{c_5}}_{(a)}t_{c_2}t_{c_6}t_{a_1}t_{a_2}t_{b_1}t_{b_2}\underbrace{t_{c_1}t_{c_7}t_{c_3}t_{c_5}}_{(b)}t_{b_1}t_{b_2}t_{c_2}t_{c_6} \nonumber\\
& \cdot t_{c_1}t_{c_7}t_{c_5} t_{t_{c_3}(c_2)}t_{c_6}\underbrace{t_{c_3}t_{a_1}t_{a_2}t_{c_3}}_{(c)}t_{b_1}t_{t_{c_3}^{-1}(b_2)}t_{c_1}t_{c_7}t_{c_5}t_{b_1}t_{b_2}t_{c_2}t_{c_6} \label{genus3factor} 
\end{align}
}
We will denote this fibration as $(X_{(1,1,1,1)}, f_{(1,1,1,1)})$ in regards to the intersection numbers $n_j$ of the  exceptional classes with the fiber, which are all honest sections in this case.

Let $S_1, S_2, S_3, S_4$ be the exceptional sections of $(X,f)$. By Lemma~\ref{lem_lantern_relation} we can derive the following new symplectic Calabi-Yau Lefschetz fibrations: The monodromy substitution by the relation \eqref{genus3_relation1} at the part (a) gives rise to a new genus-$3$ Lefschetz fibration $(X_{(2,1,1)},f_{(2,1,1)})$ with a $2$-section $S_{12}$ derived from $S_1$ and $S_2$ and the two sections $S_3$, $S_4$ inherited from $(X,f)$. By Theorem \ref{mainthm}, $S_{12}$ is a $2$-sphere with self-intersection equal to $-1$. In fact, since a substitution by the lantern relation corresponds to a rational blow-down along $C_2$ \cite{Endo_Gurtas_2010}, $b^+(X_{(2,1,1)})=b^+(X)$ and $b^-(X_{(2,1,1)})= b^-(X)-1$. Note that we could also apply the lantern substitution along (c) to arrive at a similar Lefschetz fibration. 

We can further apply the substitution by the relation \eqref{genus3_relation2} at the part (c) to $(X_{(2,1,1)},f_{(2,1,1)})$. This substitution now gives rise to a genus-$3$ Lefschetz fibration $(X_{(2,2)},f_{(2,2)})$ with two $(-1)$-spheres $S_{12}$ and $S_{34}$ that are $2$-sections. Once again, $b^+(X_{(2,2)})=b^+(X)$ and $b^-(X_{(2,2)})= b^-(X)-2$. We have therefore obtained a fiber sum indecomposable Lefschetz fibration on $X_{(2,2)}$, which is not a rational or ruled surface, where there are no other exceptional classes other than $S_{12}$ and $S_{34}$, and thus, there are no $(-1)$ sphere sections due to formula $\sum n_j = (\sum S_j) \cdot F = 4$. 
 
We can further apply a substitution at (b) to arrive at a symplectic Calabi-Yau Lefschetz fibration $(X_{(4)},f_{(4)})$
with a single exceptional class represented by a sphere $4$-section $S_{1234}$. By the same arguments as above, this is a fiber sum indecomposable fibration without any $(-1)$ sphere sections. Note that we could change the order of substitutions and braid $S_1$, $S_2$ and $S_3$ using simultaneous substitutions along (a) and (b) first into a $3$-section $S_{123}$, and then apply a substitution along (c) to arrive at $(X_{(4)},f_{(4)})$.

Hence we have obtained a pair of counter-examples to Stipsicz's conjecture:

\begin{theorem}\label{thm_example_genus3}
Lefschetz fibrations $(X_{(2,2)},f_{(2,2)})$ and $(X_{(4)},f_{(4)})$ are fiber sum indecomposable, but do not admit any $(-1)$ sphere sections. 
\end{theorem}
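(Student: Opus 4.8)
The plan is to assemble two facts already available in this section with one short positivity-of-intersections argument. The first fact, recorded at the opening of Section~\ref{Stipsicz}, is that every symplectic Calabi--Yau Lefschetz fibration is fiber sum indecomposable \cite{BaykurDecomp, Usher}. The second is the sharp relation $\sum_j n_j = (\sum_j S_j)\cdot F = 2g-2$ for the exceptional multisections of a genus-$g$ symplectic Calabi--Yau Lefschetz fibration, which is the content of Theorem~\ref{SCYLF} and the discussion of Taubes' work \cite{T, T2} preceding it. Both target fibrations have genus $g=3$, so the relevant budget is $2g-2 = 4$.

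First I would confirm that $(X_{(2,2)}, f_{(2,2)})$ and $(X_{(4)}, f_{(4)})$ are genus-$3$ symplectic Calabi--Yau Lefschetz fibrations. Each is obtained from the symplectic Calabi--Yau fibration of Proposition~\ref{lem_signature_genus3LF} by braiding lantern substitutions (Lemma~\ref{lem_lantern_relation}), which by \cite{Endo_Gurtas_2010} are rational blow-downs; as noted in the construction these preserve $b^+ = 3 > 1$ and keep the total space away from the rational and ruled locus. Since $(X_{(2,2)}, f_{(2,2)})$ carries the disjoint $(-1)$-spheres $S_{12}, S_{34}$ as $2$-sections with $2+2 = 4 = 2g-2$, and $(X_{(4)}, f_{(4)})$ carries the $(-1)$-sphere $4$-section $S_{1234}$ with $4 = 2g-2$, Theorem~\ref{SCYLF} identifies both as symplectic Calabi--Yau Lefschetz fibrations. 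Fiber sum indecomposability is then immediate from the cited result.

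The substantive point is the absence of $(-1)$-sphere sections, and here I would argue by adjunction together with positivity of intersections. Suppose $(X_{(2,2)}, f_{(2,2)})$ admitted a $(-1)$-sphere section $S$; by \cite{GS} we may take $S$ to be symplectic. The adjunction formula applied to the canonical class gives $\kappa_X \cdot [S] = 2g(S) - 2 - [S]^2 = -2 - (-1) = -1$. On the other hand, since $X_{(2,2)}$ is a $2$-fold blow-up of a minimal symplectic Calabi--Yau, its canonical class satisfies $\kappa_X = \mathrm{PD}(e_1 + e_2)$ modulo torsion, where $e_1 = [S_{12}]$ and $e_2 = [S_{34}]$ are the exceptional classes and torsion does not affect intersection numbers. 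As $S$ is a section it meets the fiber once, whereas $e_i \cdot F = 2$, so $[S] \neq e_i$. Invoking Taubes' theorem (valid because $b^+ = 3 > 1$), each $e_i$ has nonzero Gromov invariant and is therefore represented by a $J$-holomorphic curve $C_i$ for every $\omega$-compatible $J$; choosing $J$ so that $S$ is also $J$-holomorphic, and noting that $S \neq C_i$ since their classes differ, positivity of intersections yields $[S]\cdot e_i = S \cdot C_i \geq 0$ for $i=1,2$. Summing, $\kappa_X \cdot [S] = [S]\cdot e_1 + [S]\cdot e_2 \geq 0$, contradicting $\kappa_X \cdot [S] = -1$. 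The identical argument, now with $\kappa_X = \mathrm{PD}(e)$ for $e = [S_{1234}]$ and $e \cdot F = 4 \neq 1$, rules out a $(-1)$-sphere section of $(X_{(4)}, f_{(4)})$.

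The main obstacle I anticipate is the technical justification that a single compatible $J$ can be chosen for which the hypothetical section $S$ and the exceptional holomorphic representatives $C_i$ coexist, so that positivity applies cleanly. This is precisely what Taubes' Gromov--Seiberg--Witten correspondence delivers: exceptional classes are holomorphically representable for \emph{every} $\omega$-compatible $J$, so one is free to adapt $J$ to the embedded symplectic sphere $S$ while retaining representatives of the $e_i$. Once this is in place, the remainder is the elementary sign computation above.
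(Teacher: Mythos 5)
Your first paragraph reproduces the paper's own argument: both fibrations are genus-$3$ symplectic Calabi--Yau Lefschetz fibrations by Theorem~\ref{SCYLF} (the braiding lantern substitutions are rational blow-downs preserving $b^+=3$, so the total spaces stay away from the rational/ruled locus), and fiber sum indecomposability is then the cited fact from \cite{BaykurDecomp, Usher}. Where you diverge is the absence of $(-1)$-sphere sections. The paper's argument is a counting one: the exceptional classes of $X_{(2,2)}$ (resp.\ $X_{(4)}$) are \emph{precisely} $[S_{12}],[S_{34}]$ (resp.\ $[S_{1234}]$), each meeting the fiber at least twice and saturating $\sum_j n_j = (\sum_j S_j)\cdot F = 2g-2 = 4$; a $(-1)$-sphere section would be a symplectic $(-1)$-sphere (after adjusting the compatible form via \cite{GS}), hence would represent an exceptional class meeting the fiber once, which is impossible. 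No adjunction computation and no choice of almost complex structure is needed.

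Your adjunction-plus-positivity route reaches the same numerical contradiction, but the step you yourself flag as the main obstacle is a genuine gap, and your proposed resolution of it is not correct as stated. Taubes' theorem does \emph{not} provide embedded $J$-holomorphic representatives of the exceptional classes $e_i$ for \emph{every} compatible $J$: nonvanishing of the Gromov invariant gives embedded representatives only for \emph{generic} $J$, while for the special, non-generic $J$ you adapt to the hypothetical section $S$, Gromov compactness yields only a possibly reducible, possibly multiply-covered limit in the class $e_i$. Your observation that $S \neq C_i$ ``since their classes differ'' does not rule out $S$ occurring as a \emph{component} of such a limit; if it did, that component would contribute $[S]^2=-1$ to $e_i\cdot[S]$, and the inequality $e_i\cdot[S]\geq 0$, on which your contradiction rests, would fail. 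The repair runs the other way: since $S$ can be made symplectic by \cite{GS} (for a form deformation equivalent to $\omega$, hence with the same canonical class), $[S]$ is itself an exceptional class with nonzero Seiberg--Witten, hence Gromov, invariant \cite{T, T2}; one then chooses a single \emph{generic} $J$ for which all of $[S], e_1, e_2$ simultaneously admit embedded $J$-holomorphic sphere representatives, and distinctness of the classes gives distinct irreducible curves and $e_i\cdot[S]\geq 0$ by positivity of intersections. Alternatively, once $[S]$ is known to be an exceptional class, one can simply invoke the classification of exceptional classes in the non-rational, non-ruled case \cite{Li_1999}, as the paper implicitly does: every exceptional class is one of the listed multisections, none of which meets the fiber once.
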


\begin{remark} 
As shown above, fiber sum indecomposable Lefschetz fibrations without $(-1)$-sphere sections do appear when the fiber genus $g>2$ as well. Furthermore, invoking Endo's signature formula for hyperelliptic Lefschetz fibrations, one can easily observe that these fibrations are not hyperelliptic, as opposed to any genus-$2$ example one can produce.
\end{remark}

\vspace{0.15cm}
\subsection{A new genus-$2$ counter-example} \label{SectionGenus2Counterex} \

The following is the monodromy factorization for a genus-$2$ symplectic Calabi-Yau Lefschetz fibration obtained in the previous section:

{\allowdisplaybreaks
\begin{align*}
t_{\delta_1}t_{\delta_2} & = (t_{d_4}t_{d_3}t_{d_2})^2 (t_{c_2}t_{c_3}t_{c_4})^4 \\
& = (t_{d_4}t_{d_3}t_{d_2})^2 \underbrace{t_{c_2}^2t_{c_4}^2}_{(d)}t_{t_{c_2}^{-1}t_{c_4}^{-2}(c_3)}t_{t_{c_4}^{-1}(c_3)}(t_{c_2}t_{c_3}t_{c_4})^2.
\end{align*}
}
%
We can apply the braiding lantern substitution at the part (d) above as we applied in the previous subsection. 
This substitution changes the two exceptional sections into a sphere bisection with self-intersection $-1$. 
We denote the resulting fibration by $h_{(2)}:Y_{(2)}\to S^2$. Using \cite[Lemma 5.1]{Gompf} we can prove that $Y_{(2)}$ is diffeomorphic to the blow-down of $Y_{(1,1)}$ (see also Proposition~\ref{Gompflemma}). In particular $Y_{(2)}$ is not rational or ruled. Thus, by the same argument as in the previous subsection we obtain the following theorem: 

\begin{theorem}\label{thm_example_genus2}
Lefschetz fibration $(Y_{(2)},h_{(2)})$ is fiber sum indecomposable, but does not admit any $(-1)$ sphere sections. 
\end{theorem}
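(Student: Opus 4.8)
The plan is to run the genus-$2$ argument as a faithful mirror of the genus-$3$ construction in Subsection~\ref{genus3counter}, with the four-section configuration there replaced by the single bisection produced here. I would start from the genus-$2$ symplectic Calabi-Yau Lefschetz fibration $(Y,h)=(Y_{(1,1)},h_{(1,1)})$ of Proposition~\ref{lem_pi1_genus2LF}, whose total space is a two-fold blow-up of a minimal symplectic Calabi-Yau homeomorphic to the Enriques surface, and which carries two disjoint $(-1)$-sphere sections $S_1,S_2$ parallel to $\delta_1,\delta_2$. The first step is to record the effect of the braiding lantern substitution of Lemma~\ref{lem_lantern_relation} applied at the subword $(d)=t_{c_2}^2 t_{c_4}^2$: by the remark following that lemma it braids $S_1$ and $S_2$, each of self-intersection $-1$, into a single sphere bisection $S_{12}$ of self-intersection $s_1+s_2+1=-1$, yielding $(Y_{(2)},h_{(2)})$. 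Since the substitution is purely local, the fiber genus and the rest of the fibration are untouched.

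The crucial, and essentially the only genuinely new, step is to verify that $Y_{(2)}$ is neither rational nor ruled; this is where I expect the main obstacle to be. In contrast to the genus-$3$ case, here one has $b^+=1$ throughout, so the target symplectic Calabi-Yaus sit in the same $b^+$-regime as the rational and ruled surfaces and cannot be separated from them by the numerical invariants $\eu$ and $\sigma$ alone. I would resolve this exactly as flagged in the text, by invoking \cite[Lemma~5.1]{Gompf} (see also Proposition~\ref{Gompflemma}) to identify $Y_{(2)}$ diffeomorphically with the blow-down of $Y_{(1,1)}$, i.e. a one-fold blow-up of the minimal Enriques-type symplectic Calabi-Yau. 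Because the symplectic Kodaira dimension is a smooth invariant and equals $0$ for this minimal model and all of its blow-ups, $Y_{(2)}$ has $\kappa=0$ and is therefore neither rational nor ruled.

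With non-rationality established, the conclusion follows automatically and in parallel with the genus-$3$ argument. By Theorem~\ref{SCYLF}, applied with $\sum n_j = 2 = 2g-2$ realized by the disjoint collection $\{S_{12}\}$, the pair $(Y_{(2)},h_{(2)})$ is a genus-$2$ symplectic Calabi-Yau Lefschetz fibration, hence fiber sum indecomposable by \cite{BaykurDecomp, Usher}. Finally, since the exceptional bisection $S_{12}$ already attains the maximal intersection count $\sum n_j = 2g-2$, it exhausts the exceptional classes; as $S_{12}$ meets the fiber twice it is not a section, and any hypothetical $(-1)$-sphere section would be an additional exceptional class meeting the fiber exactly once, which is impossible. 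Therefore $(Y_{(2)},h_{(2)})$ admits no $(-1)$-sphere section, completing the proof.
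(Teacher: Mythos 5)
Your proposal is correct and takes essentially the same route as the paper: the braiding lantern substitution of Lemma~\ref{lem_lantern_relation} at the subword $(d)$, the identification of $Y_{(2)}$ with the blow-down of $Y_{(1,1)}$ via Gompf's lemma (Proposition~\ref{Gompflemma}) to conclude that $Y_{(2)}$ is not rational or ruled, and then the symplectic Calabi-Yau characterization (Theorem~\ref{SCYLF}) together with the exhaustion $\sum n_j = 2g-2 = 2$ by the exceptional bisection $S_{12}$ to obtain both fiber sum indecomposability and the nonexistence of $(-1)$-sphere sections. The only difference is expository: you spell out the Kodaira-dimension reasoning and the final counting argument that the paper compresses into ``by the same argument as in the previous subsection.''
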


\noindent Tracing the braided lantern curves in the above monodromy substitution, one can verify that the explicit positive factorization of this fibration, along with its exceptional bisection, is the one we have given earlier in Example~\ref{FactExample}.

\begin{remark}[More examples]
Following the same recipe, one can also obtain counter-examples to Stipsicz's conjecture from the already known $2$-boundary chain relation on the genus--$2$ surface:
\[ (t_{c_1} t_{c_2} t_{c_3} t_{c_4} t_{c_5})^6=1  \, , \]
which prescribes a Lefschetz fibration $f$ on $X=\K \# 2 \CPb$. Through Hurwitz moves, one can easily find a lantern configuration in an equivalent factorization. Since the $4$ curves of the lantern configuration yield a symplectic $2$-sphere $V$ of self-intersection $-4$ \cite{GayMark}, and since $(X,f)$ is an SCY, we know that there are $2$ exceptional sections $S_1, S_2$ of $f$ each hitting $V$ once. In turn, this a priori tells that we have a lift of this factorization where one can apply the braiding lantern relation of Lemma~\ref{lem_lantern_relation} to produce a new SCY Lefschetz fibration, which is fiber sum indecomposable but doesn't have any exceptional sections. In fact, Auroux's genus-$2$ Lefschetz fibration, the first known counter-example to Stipsicz's conjecture \cite{Sato_2008}, can be seen to arise in this way as well; see \cite{BH2}.
\end{remark}

Intrinsic to our strategy to argue that we obtain true counter-examples is that they are all symplectic Calabi-Yau Lefschetz fibrations. Although one would expect the answer to be affirmative, a natural question that arises is:

\begin{question}
Are there any fiber sum indecomposable Lefschetz fibrations with no $(-1)$-sphere sections on symplectic $4$-manifolds of non-zero Kodaira dimension?
\end{question}

As we discussed earlier, by Usher's theorem \cite{Usher}, any Lefschetz fibration on a non-minimal symplectic $4$-manifold is necessarily fiber sum indecomposable. (A short alternative proof of this particular fact was given in \cite{BaykurDecomp} making use of multisections.) To the best of our knowledge, there are no known examples of fiber sum indecomposable --nontrivial-- Lefschetz fibrations on \textit{minimal} symplectic $4$-manifolds, where there are many examples of fiber sum decomposable ones such as the Lefschetz fibrations on knot surgered elliptic surfaces \cite{FSKnotSurgeryLF}. We end with noting this curious question:

\begin{question}
Are there any fiber sum indecomposable Lefschetz fibrations on minimal symplectic $4$-manifolds?
\end{question}

\vspace{0.2in}
\section{Rational blow-downs and non-isomorphic Lefschetz fibrations} \label{Nonisomorphic}

Two Lefschetz pencils\,/\,fibrations $(X, f_i)$ are called  \emph{isomorphic} if there are \linebreak orientation-preserving self-diffeomorphisms of the $4$-manifold and the base $S^2$ which make the two commute. In particular, the fiber genera, as well as the number of base points in the case of pencils, should match. This translates to the two associated monodromy factorizations to be equivalent up to global conjugations and Hurwitz moves. Park and Yun used the latter approach to show that there are pairs of odd genus $g \geq 5$ inequivalent Lefschetz \textit{fibrations} on certain knot surgered elliptic surfaces \cite{ParkYun} of Fintushel and Stern. These are all fiber sum \textit{decomposable}, in particular do not blow-down to pencils, and are easily seen to be equivalent via partial conjugations ---conjugations applied to a subword of monodromy factorizations. (Another isolated example on $T^2 \x \Sigma_2 \# 9 \CPb$, as we learn from the same authors, is given by Smith in his thesis.) More recently, the first author constructed arbitrary number of non-isomorphic Lefschetz pencils\,/\,fibrations on blow-ups of any symplectic $4$-manifold which is not a rational or ruled surface \cite{BaykurLuttingerLF}.

Here we will introduce a new approach and construct the first examples of non-isomorphic Lefschetz \textit{pencils} with fiber genus as low as $3$. To do so, we will employ rational blow-down operations corresponding to certain monodromy substitutions, which we briefly review below. The key ingredient in our arguments will be an observation originally due to Gompf which we review in the next subsection.

\vspace{0.15cm}
\subsection{A mirror rational-blowdown operation} \ 

The \textit{rational blow-down} operation introduced by Fintushel-Stern \cite{FSrationalblowdown} is defined as follows: Let $p \geq  2$ and $C_p$ be the smooth $4$-manifold obtained by plumbing disk bundles over the $2$-sphere according to the linear diagram in Figure~\ref{fig_graph_blowdown}, where each vertex $u_{i}$ of the linear diagram represents a disk bundle over $2$-sphere with the indicated Euler number. 
\begin{figure}[htbp]
\centering
\includegraphics[width=40mm]{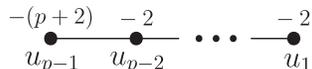}
\caption{A plumbing diagram representing $C_p$.}
\label{fig_graph_blowdown}
\end{figure}

The boundary of $C_p$ is the lens space $L(p^2, 1 - p)$, which also bounds a rational ball $B_p$ with $\pi_1(B_p) = {\mathbb{Z}}/p\Z$ and $\pi_1(\partial B_p) \rightarrow  \pi_1(B_p)$ surjective. Whenever $C_p$ is embedded in a $4$-manifold $X$, we can thus produce a new $4$-manifold 
\[X_p= (X \setminus C_p) \cup B_p . \]
Algebraic topological invariants of $X_p$ are easily derived from those of $X$: 
\[b^{+}(X_p) = {b}^{+}(X) \, , \, \eu(X_p) = \eu(X) - (p-1) \, , \sigma(X_p) = \sigma(X) + (p-1) \, , \]
\[{c_1}^{2}(X_p) = {c_1}^2(X) + (p-1) \, . \]
If $X$ and $X \setminus C_p$ are simply-connected, then so is $X_p$.

A \textit{monodromy substitution} is the trading of a subword in a given monodromy factorization by positive Dehn twists with another subword of the same type. We will employ this operation more generally to monodromy factorizations capturing multisections as well, i.e. not only positive Dehn twists but also positive arc twists for multisections will be allowed in the subwords. As shown by Endo and Gurtas, a particular substitution in a monodromy factorization of a Lefschetz fibration which trades $4$ Dehn twists with $3$ Dehn twists by the lantern relation on a $2$-sphere with $4$ holes corresponds to produce a new Lefschetz fibration corresponds to the simplest possible rational blow-down operation for the underlying $4$-manifolds: blowing-down a $C_2$ configuration which is a $(-4)$-sphere formed by the $4$ vanishing cycles on the fiber. Notably, the two fibrations can be supported by symplectic structures in a natural way \cite{GayMark}.

We will be interested in a special configuration, which can be blow-down in $3$ different ways: 
two disjoint $(-4)$-spheres $V_1, V_2$ and a $(-1)$-sphere $S$ intersecting each of the $(-4)$-sphere positively at one point. One can then blow-down $S$ or rationally blow-down $V_i$. As it will become apparent in the proof of Theorem \ref{nonisomLFs}, we will be interested in monodromy substitutions that correspond to this ``mirror'' blow-downs of $V_1$ and $V_2$ in the presence of a $(-1)$-sphere section intersecting both. The essential ingredient here is a result of Gompf \cite{Gompf} (also see \cite{Dorfmeister}) quoted below, for which we will sketch a handlebody proof. Thus the $4$-manifolds that result from rationally blowing-down $V_1$ or $V_2$ and $S$ are diffeomorphic, yielding diffeomorphic $4$-manifolds for all $3$ blow-downs.

\begin{proposition}[{\cite[Lemma 5.1]{Gompf}}] \label{Gompflemma}
Let $V_i$ be disjoint embedded $(-4)$-spheres, each intersecting a $(-1)$-sphere $S$ in $X$ once, and let $X_i$ denote the $4$-manifold obtained by a rational blowdown of $V_i$ in $X$, $i=1,2$, and $X_0$ be the one obtained by the blowdown of $S$. Then the $4$-manifolds $X_i$, $i=0,1,2$, are all diffeomorphic.
\end{proposition}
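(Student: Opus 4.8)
The plan is to localize all three operations to a single plumbing neighborhood and then compare the resulting local pieces by Kirby calculus.

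First I would pass to a regular neighborhood $N$ of the configuration $V_1 \cup S \cup V_2$. Since $V_1 \cdot S = V_2 \cdot S = 1$ and $V_1 \cdot V_2 = 0$, this $N$ is the linear plumbing whose three vertices carry weights $-4$, $-1$, $-4$. The crucial structural observation is that each of the three operations is supported in $N$ and is the identity on $X \setminus N$: blowing down $S$ replaces $N$ by a neighborhood $N_0$ of the proper transforms of $V_1, V_2$; rationally blowing down $V_i$ replaces $N$ by $N_i = (N \setminus \nu(V_i)) \cup B_2$. All of $N_0, N_1, N_2$ share the same boundary $\partial N$ (a rational homology sphere with $|H_1|=8$, as one reads off the plumbing form). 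Hence it suffices to produce diffeomorphisms $N_0 \to N_i$ restricting to the identity up to isotopy on $\partial N$; gluing these to $\mathrm{id}_{X \setminus N}$ then yields $X_0 \cong X_1 \cong X_2$.

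Next I would draw $N$ as a chain of three unknots $K_1, K_S, K_2$ with framings $-4, -1, -4$, consecutive ones Hopf linked and $K_1, K_2$ unlinked. Blowing down the central $(-1)$-framed $K_S$ inserts a positive full twist between the strands of $K_1$ and $K_2$, raising both framings from $-4$ to $-3$ and raising their linking number from $0$ to $1$; thus $N_0$ is precisely the plumbing of two $(-3)$-spheres meeting transversally once. For $N_1$ (and $N_2$ by the evident end-exchanging symmetry of the diagram) I would substitute, in place of the neighborhood of the $(-4)$-framed $K_1$, the handle description of the rational ball $B_2$ of \cite{FSrationalblowdown} --- namely the twisted $D^2$-bundle over $\RP$, given by one $1$-handle together with a single $2$-handle running twice over it. The key point is that the $(-1)$-sphere $S$, meeting $V_1$ once, supplies a $2$-handle ($K_S$) that can be slid over and then cancelled against the $1$-handle of $B_2$. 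This cancellation is exactly what kills the $\Z/2 = \pi_1(B_2)$ contribution, so that the rational blowdown of $V_1$ behaves like an ordinary blow-down; after it, routine handle slides reorganize $K_S$ and $K_2$ into two $(-3)$-framed unknots of linking number $1$, i.e. into the same diagram as $N_0$.

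I expect the main obstacle to be the handle calculus producing $N_1$: carrying out the $1$-handle/$2$-handle cancellation and the subsequent slides while tracking framings and linking numbers, and---crucially---arranging the resulting diffeomorphism $N_1 \cong N_0$ to be the identity on $\partial N$ so that it glues with the identity on the complement. Here the conceptual heart is the $\pi_1$ bookkeeping: one must verify that the essential loop generating $\pi_1(B_2)$ is capped off by the disk $S \setminus \nu(V_i)$, which is what forces $\pi_1$ of the local piece down to the trivial (or at least the blow-down) value and makes the presence of $S$ indispensable. I would also emphasize that the end-exchanging symmetry of $N$ gives $N_1 \cong N_2$ but \emph{not} $X_1 \cong X_2$ directly, since $X \setminus N$ need not admit a compatible symmetry; this is exactly why the argument must deliver diffeomorphisms rel $\partial N$ to the fixed model $N_0$, rather than a bare diffeomorphism of the local pieces.
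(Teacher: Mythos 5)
Your proposal follows essentially the same route as the paper's proof: localize the operations to a plumbing neighborhood, show by Kirby calculus that the blown-down and rationally blown-down local pieces are diffeomorphic relative to the boundary --- with the cancellation of the rational ball's $1$-handle against the $2$-handle coming from $S$ as the decisive move --- and then glue back by the identity on the complement. The only inessential differences are that the paper works with the two-vertex neighborhood $\nu(V_i\cup S)$ for each $i$ separately rather than your three-vertex $N$, and that it obtains the attaching picture of $B_2=\CP\setminus\nu C$ by turning that handlebody upside down and blowing down, before carrying out the same slides and final handle cancellation you describe.
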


\begin{proof}
Let $X_0$ denote the manifold obtained by blowing-down $S$. 
It is sufficient to prove that each of $X_1$ and $X_2$ is diffeomorphic to $X_0$. We will verify the blow-downs along $V_i$ and $S$ give rise to diffeomorphic $4$-manifolds using handlebody diagrams. 

Rational blowdown is equivalent to removing a tubular neighborhood of a \linebreak $(-4)$-sphere and pasting $\CP \setminus \nu C$, where $C$ is a non-singular quadratic curve and $\nu C$ is its tubular neighborhood. 
Following a procedure for drawing a diagram of the surface complement in \cite[Section 6.2]{GS}, we can obtain a diagram of $\CP\setminus \nu C$ which is shown in Figure~\ref{fig_curvecomplement1} (the configuration of $C$ in the diagram of $\CP$ was discussed in \cite{Akbulut_Kirby_1980}).
\begin{figure}[htbp]
\centering
\subfigure[]{\includegraphics[height=20mm]{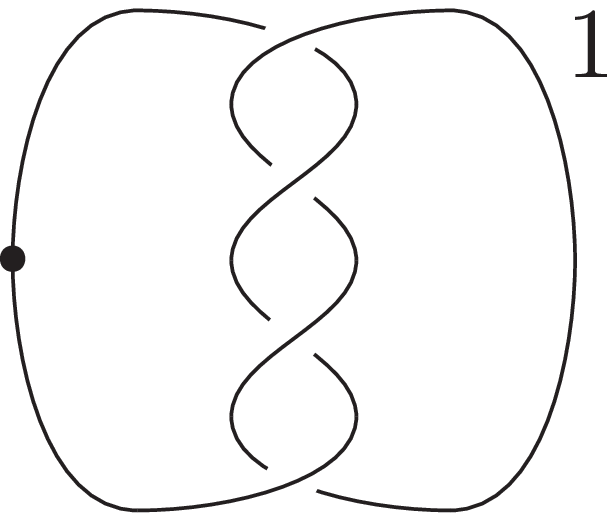}
\label{fig_curvecomplement1}}
\subfigure[]{\includegraphics[height=20mm]{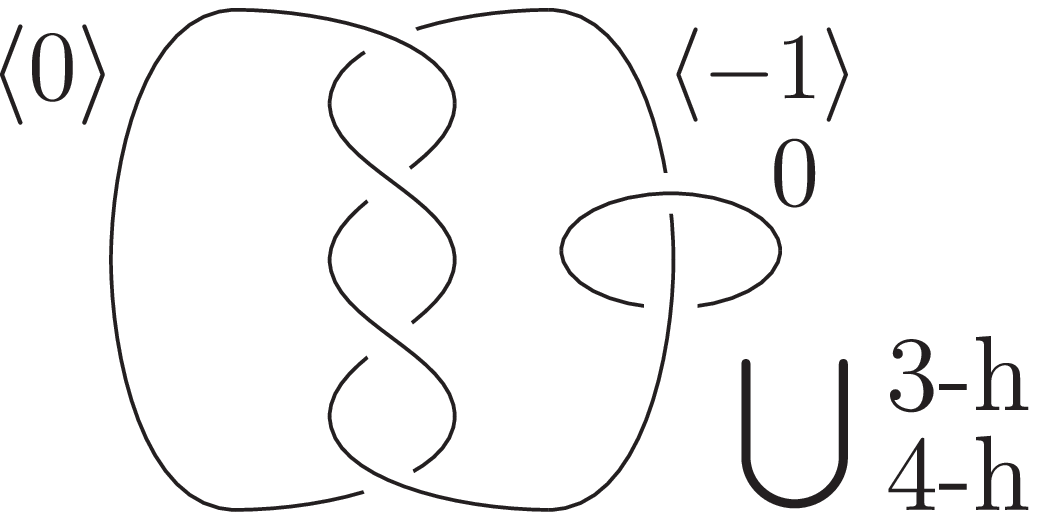}
\label{fig_curvecomplement2}}
\subfigure[]{\includegraphics[height=20mm]{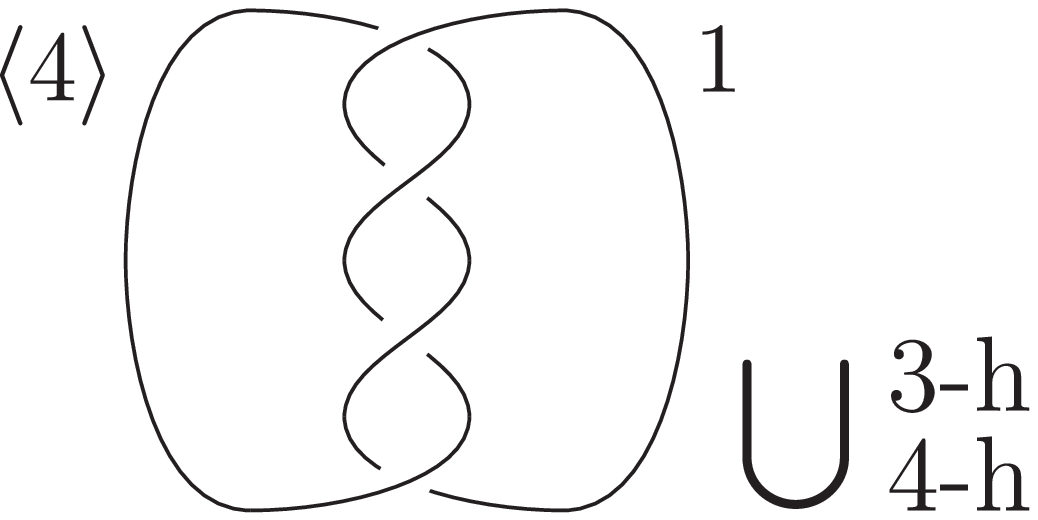}
\label{fig_curvecomplement3}}
\caption{Diagrams of $\CP\setminus \nu C$.}
\label{fig_curvecomplement}
\end{figure}
By turning the handlebody corresponding to the diagram up side down, we obtain the diagram in Figure~\ref{fig_curvecomplement2}. 
The diagram in Figure~\ref{fig_curvecomplement3} can be obtained by blow-down. 
Thus attaching $\CP\setminus \nu C$ to the boundary of a tubular neighborhood of a $(-4)$-sphere is equivalent to attaching handles as shown in Figure~\ref{fig_curvecomplement3}. 
We eventually obtain the diagram of the manifold obtained by rationally blowing-down a regular neighborhood of $V_i\cup S$ along $V_i$ as shown in Figure~\ref{fig_rationalblowdown_plumbing}. (Note that the $0$-framed handle in this diagram coincides with the $(-1)$-sphere $S$). 
\begin{figure}[htbp]
\centering
\subfigure[]{\includegraphics[height=24mm]{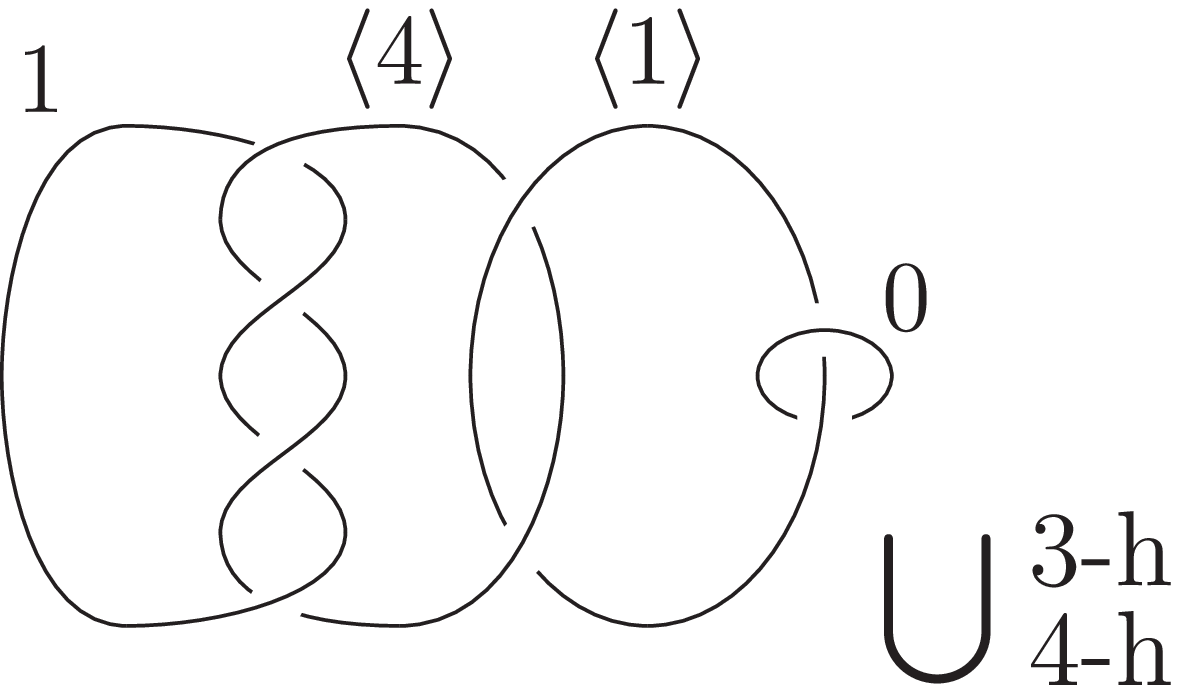}
\label{fig_rationalblowdown_plumbing}}
\hspace{2em}
\subfigure[]{\includegraphics[height=24mm]{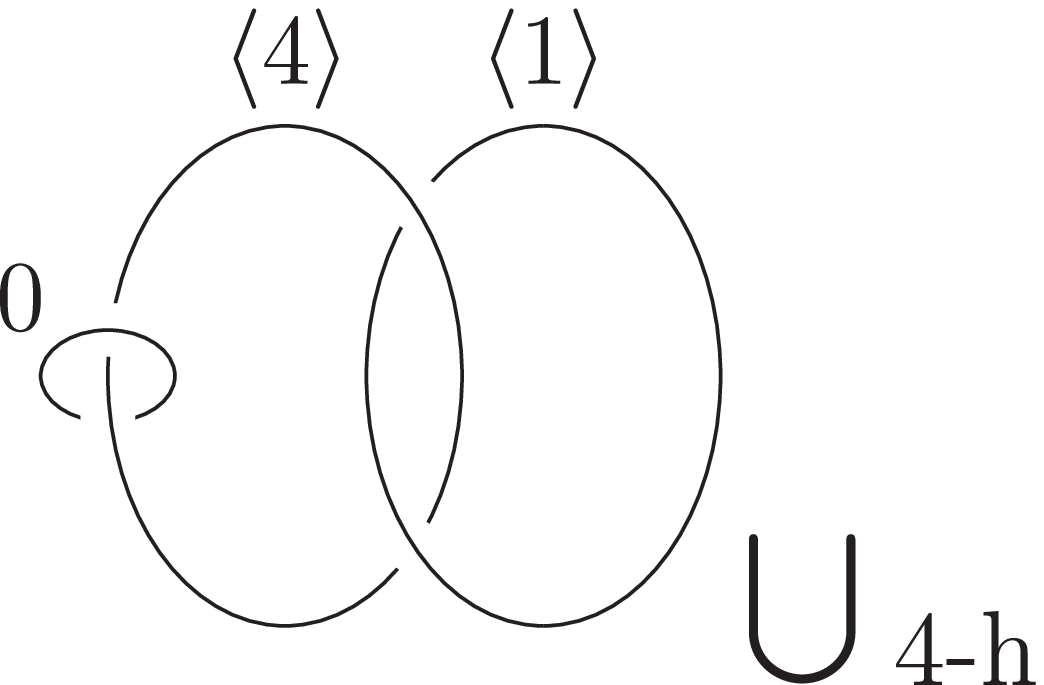}
\label{fig_blowdown_plumbing}}
\caption{The manifolds obtained by blowing down a neighborhood of $V_i\cup S$. }
\end{figure}
On the other hand, Figure~\ref{fig_blowdown_plumbing} shows a diagram of the manifold resulting from blowing-down a regular neighborhood of $V_i\cup S$ along $S$. 
These two manifolds are diffeomorphic relative to the boundaries as shown in Figure~\ref{fig_calculus_diffeomorphism}. 
The diagram in Figure~\ref{fig_calculus_diffeomorphism1} can be obtained by sliding the $0$-framed $2$-handle in Figure~\ref{fig_rationalblowdown_plumbing} to the knot with the label $\left<1\right>$.  
Sliding the resulting $(-1)$-framed handle to that with framing $1$ yields the diagram in Figure~\ref{fig_calculus_diffeomorphism2}. 
The resulting $0$-framed handle becomes a meridian of the knot with the label $\left<4\right>$ after sliding the $1$-framed handle to the knot with the label $\left<4\right>$, especially we can obtain the diagram in Figure~\ref{fig_calculus_diffeomorphism3}. 
Lastly, sliding the $1$-framed handle to the knot with the label $\left<1\right>$ and removing the canceling pair yields the diagram in Figure~\ref{fig_blowdown_plumbing}. 
\begin{figure}[htbp]
\centering
\subfigure[]{\includegraphics[height=24mm]{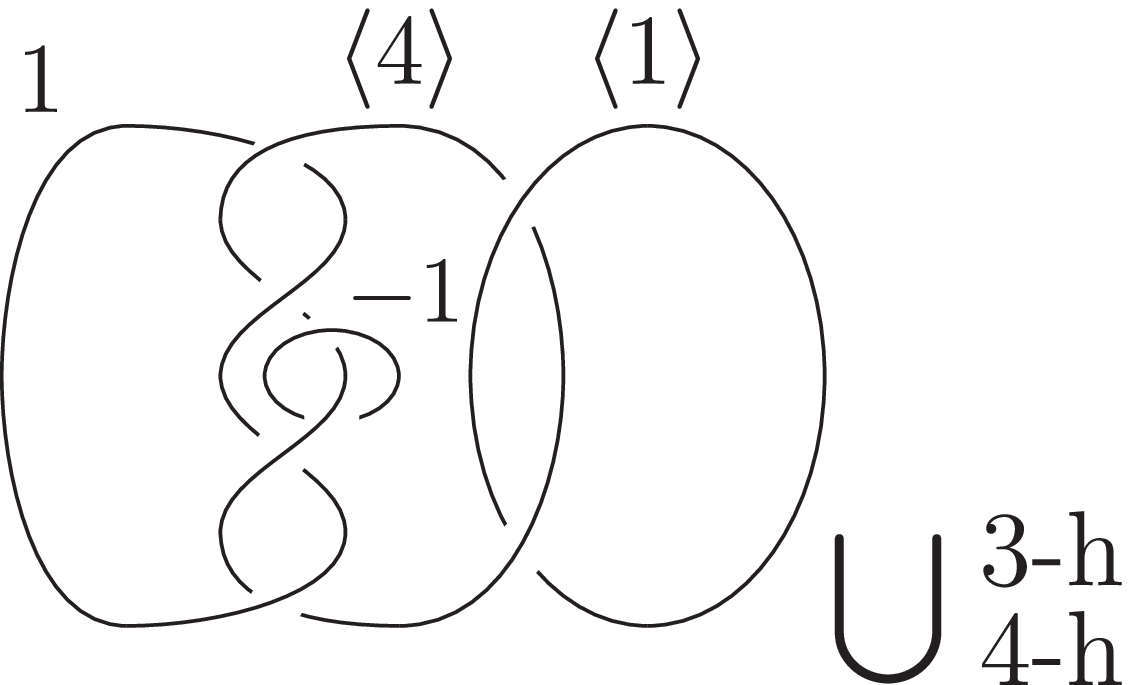}
\label{fig_calculus_diffeomorphism1}}
\subfigure[]{\includegraphics[height=24mm]{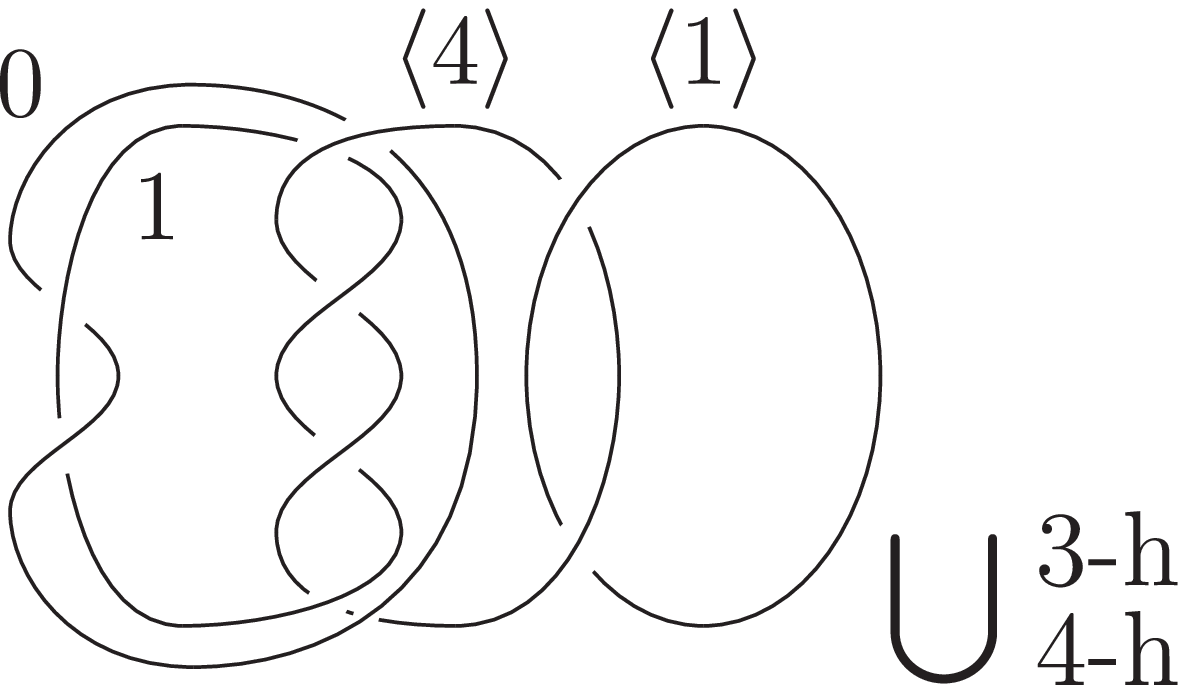}
\label{fig_calculus_diffeomorphism2}}
\subfigure[]{\includegraphics[height=24mm]{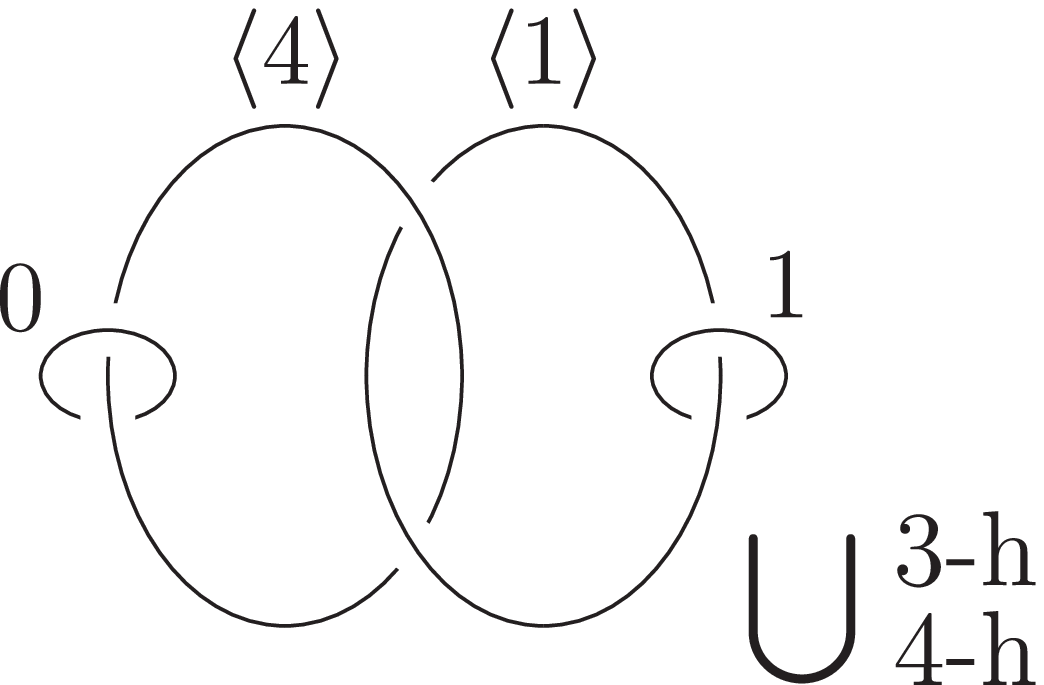}
\label{fig_calculus_diffeomorphism3}}
\caption{A sequence of handleslides.}
\label{fig_calculus_diffeomorphism}
\end{figure}
\end{proof}

%
\vspace{0.15cm}
\subsection{A new construction of non-isomorphic Lefschetz fibrations} \label{NonisomConstruction} \

Here we prove the main theorem of this section:

\begin{theorem} \label{nonisomLFs}
There are pairs of genus-$g$ relatively minimal Lefschetz pencils $(X, f_i)$, $i=1,2$, which are non-isomorphic, where $g$ can be taken as small as $3$, and arbitrarily large. 
\end{theorem}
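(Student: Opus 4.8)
The plan is to manufacture the two pencils by a pair of \emph{mirror} monodromy substitutions applied to a single symplectic Calabi--Yau Lefschetz fibration, and then to separate them by a Hurwitz-invariant extracted from their vanishing cycles. I would start from the genus-$3$ symplectic Calabi--Yau Lefschetz fibration of Proposition~\ref{lem_signature_genus3LF}, written in the form~\eqref{genus3factor}, and locate inside it a configuration consisting of two disjoint symplectic $(-4)$-spheres $V_1,V_2$ together with a $(-1)$-sphere $S$ meeting each $V_i$ transversally once, where the $V_i$ are the lantern spheres carried by two distinct lantern sub-words, as in the substitutions~\eqref{genus3_relation1} and~\eqref{genus3_relation2}. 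Because~\eqref{genus3factor} is an honestly positive factorization in which every exceptional class and section is visible, this configuration is explicit. Applying the braiding lantern relation of Lemma~\ref{lem_lantern_relation} at $V_1$ then yields a Lefschetz fibration $f_1$ obtained by a rational blow-down of $V_1$, and applying it at $V_2$ yields $f_2$ obtained by a rational blow-down of $V_2$.

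The second step is to see that $f_1$ and $f_2$ live on one and the same $4$-manifold. This is precisely what Proposition~\ref{Gompflemma} delivers: rationally blowing down $V_1$, rationally blowing down $V_2$, and ordinarily blowing down $S$ all produce mutually diffeomorphic $4$-manifolds. Hence the total spaces coincide, $X_1\cong X_2=:X$, and the invariants $b^+$, $\sigma$, $\eu$ and $\pi_1$ all agree, being computable from the substitution formulas recorded in Section~\ref{Nonisomorphic}; by construction the fibers are ambiently homeomorphic. Since both fibrations descend from a symplectic Calabi--Yau and a lantern substitution introduces no null-homotopic vanishing cycle, $f_1$ and $f_2$ are relatively minimal, and the exceptional sections surviving each substitution allow one to blow down to honest pencils with the same fiber genus and the same number of base points.

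The crux, and the step I expect to be hardest, is to prove that $(X,f_1)$ and $(X,f_2)$ are \emph{non-isomorphic}. An isomorphism of pencils forces the two monodromy factorizations to be equivalent under global conjugation and Hurwitz moves, so it suffices to exhibit an invariant of such factorizations that distinguishes them. My plan is to compare the multiset of types of the vanishing cycles: although the rational blow-downs of $V_1$ and $V_2$ have the \emph{same} effect on the diffeomorphism type of $X$, the two substitutions excise genuinely different sub-configurations of Dehn twists, so the reduced words $t_{c_l}\cdots t_{c_1}$ obtained for $f_1$ and $f_2$ need not carry the same vanishing-cycle data. Concretely I would first compare the number of reducible (separating) vanishing cycles---an obvious invariant under Hurwitz moves and conjugation---and, should these coincide, refine to the multiset of conjugacy classes of the vanishing cycles in $\Mod(\Sigma_3)$, or to the induced representation on $H_1(\Sigma_3)$ and its image, in the spirit of Park--Yun \cite{ParkYun}. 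The genuine difficulty is that the ``mirror'' construction looks deceptively symmetric, so one must verify that the chosen invariant is truly asymmetric under the exchange $V_1\leftrightarrow V_2$, and hence cannot be matched by \emph{any} sequence of Hurwitz moves; checking this asymmetry carefully on the explicit curves of Figure~\ref{fig_scc_genus3} is the heart of the argument.

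Finally, for the statement that $g$ may be arbitrarily large, I would carry the same configuration of two $(-4)$-spheres and a section into symplectic Calabi--Yau Lefschetz fibrations of higher genus---either by using higher-genus lifts built with the symmetry technique of Section~\ref{SCY}, or by a fiber-sum stabilization that keeps the two lantern sub-words and the section $S$ intact---and then repeat the mirror substitutions verbatim. Proposition~\ref{Gompflemma} again identifies the two total spaces, and the same vanishing-cycle invariant separates the resulting pencils in each genus.
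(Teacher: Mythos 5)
Your genus-$3$ construction coincides with the paper's: the same two lantern sub-words (a) and (c) of~\eqref{genus3factor}, whose associated $(-4)$-spheres $V_1,V_2$ are each met once by the exceptional section $S_2$; the same use of the braiding lantern substitution of Lemma~\ref{lem_lantern_relation} to produce the two fibrations; the same appeal to Proposition~\ref{Gompflemma} to identify their total spaces; and the same distinguishing invariant, namely the number of separating vanishing cycles, which is manifestly preserved under Hurwitz moves and global conjugation. The verification you defer is exactly what the paper records: the substitution along (c) produces a separating vanishing cycle while the one along (a) does not, so precisely one of the two fibrations has a reducible fiber, and your first-choice invariant already suffices --- no refinement to conjugacy classes of curves or homological representations is needed. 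Up to this point your proposal is essentially the paper's proof.

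The genuine gap is in your passage to arbitrarily large genus. The fiber-sum stabilization you suggest cannot produce \emph{pencils}: by the theorem of Smith \cite{Smith} and Stipsicz \cite{Stipsicz} quoted in Section~\ref{Stipsicz}, a Lefschetz fibration admitting a $(-1)$-sphere section is fiber sum indecomposable, so a nontrivial fiber sum never admits a $(-1)$-sphere section and hence is never the blow-up of a pencil; moreover, summing $(X,f_1)$ and $(X,f_2)$ with a fixed piece $(Y,h)$ yields total spaces $(X\setminus \nu F_1)\cup (Y\setminus \nu F)$ and $(X\setminus \nu F_2)\cup (Y\setminus \nu F)$ which are not obviously diffeomorphic, since $F_1$ and $F_2$ are fibers of \emph{different} fibrations on $X$. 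Your alternative (re-running the symmetry constructions of Section~\ref{SCY} in every genus) is not carried out, and there is no argument that the required configuration of two $(-4)$-spheres and a section exists for all $g$. The paper instead applies the degree doubling construction \cite{Smith2, AK, BaykurLuttingerLF} \emph{simultaneously} to the two genus-$3$ pencils: doubling takes a pencil of genus $g$ with $m$ base points to a pencil on the \emph{same} $4$-manifold with invariants $(2g+m-1, 4m)$; the separating vanishing cycle of the pencil coming from (c) persists under doubling (read off from the explicit monodromy of Auroux--Katzarkov \cite{AK}, using that this cycle splits the fiber into genus-$1$ and genus-$2$ pieces), while doubling creates no new separating vanishing cycles. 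Hence the iterated doubles remain non-isomorphic pencils of arbitrarily high genus, which is the step your proposal is missing.
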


\begin{proof}
Let $(X_{(2,1,1)}, f_{(2,1,1)})$ be the genus $3$ symplectic Calabi Yau Lefschetz fibration constructed in Subsection~\ref{genus3counter} by a monodromy substitution along (a) in Equation~\ref{genus3factor}. 
Alternatively, we can construct another genus $3$ symplectic Calabi Yau Lefschetz fibration $(X_{(2',1,1)}, f_{(2',1,1)})$ by a monodromy substitution along (c). Note that the exceptional section $S_4$ descends to both fibrations. The configurations of the lantern curves for these are as in Figure~\ref{fig_lantern_configurations}.
\begin{figure}[htbp]
\centering
\subfigure[]{
\includegraphics[height=33mm]{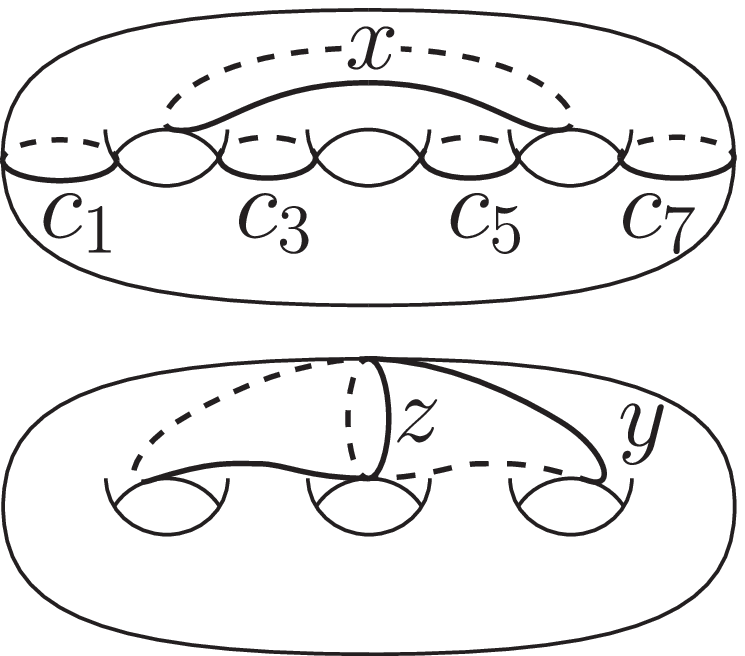}\label{fig_lantern_configuration1}
}
\hspace{10mm}
\addtocounter{subfigure}{1}
\subfigure[]{
\includegraphics[height=33mm]{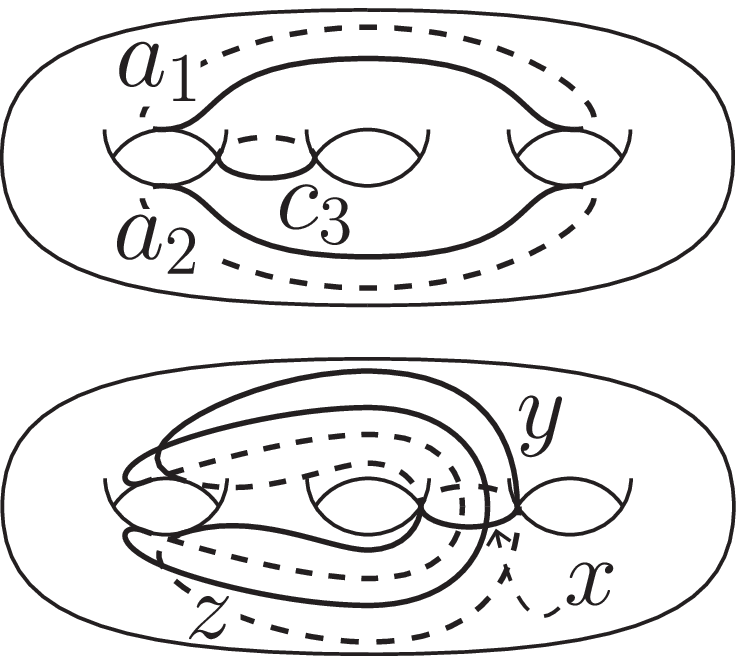}\label{fig_lantern_configuration2}
}
\caption{The lantern curves appearing in the substitution at the parts (a) and (c).}
\label{fig_lantern_configurations}
\end{figure}
Since the section $S_2$ intersects both $(-4)$-spheres $V_1$ and $V_2$ corresponding to the monodromy substitutions along (a) and (c), respectively, this provides us the configuration of surfaces in $X_{(1,1,1,1)}$ studied in the previous paragraph. 
As we observed, the $4$-manifolds $X_{(2,1,1)}$ and $X_{(2',1,1)}$ are diffeomorphic. Since only one of them contains a separating vanishing cycle, they are non-isomorphic genus-$3$ Lefschetz pencils on a once blown-up symplectic Calabi-Yau $\K$ surface. 

An infinite family of pairs of non-isomorphic Lefschetz pencils of arbitrarily high fiber genera can be produced by applying the degree doubling construction (as discussed in detail in the next section) simultaneously to both $(X, f_i)$, at each step changing the genus and number of base points by $\tilde{g}=2g+m-1$ and $\tilde{m}=4m$. The separating vanishing cycle in $(X,f_2)$, obtained from $(X_{(2',1,1)}, f_{(2',1,1)})$ by blowing-down the $(-1)$-sphere section $S_4$ splits the fiber to genus-$1$ and genus $2$ components, only one of which is hit by $S_4$. As seen by the explicit monodromy factorization obtained by Auroux and Katzarkov \cite{AK}, this separating vanishing cycle will then contribute to a separating vanishing cycle of the resulting pencil. On the other hand, doubling produces no new separating vanishing cycles. Therefore, iterated doubles of $(X, f_i)$, for $i=1,2$ at each step will remain to be non-isomorphic, as one will contain a reducible fiber and the other one will not. 
\end{proof}

\begin{remark}[An interlude on smallest possible fiber genera of non-isomorphic Lefschetz fibrations/pencils]\label{R:genera nonisomLF} 
Here we would like to make a few remarks on how small the genus of non-isomorphic Lefschetz pencils\,/\,fibrations can be, discussed by varying the additional features of such examples that we know of so far.

First of all, if we are after obtaining non-isomorphic Lefschetz fibrations, for $g \leq 1$ this phenomenon does not appear. If we ask for non-isomorphic examples with differing number of reducible fibers as we have in Theorem~\ref{nonisomLFs}, our $g=3$ fibrations can be seen to be the smallest possible genera representatives of this kind. For, any genus-$2$ Lefschetz fibration is hyperelliptic, the signature formula of Endo shows that the signature contributions of reducible and irreducible singular fibers are different, and thus their total spaces cannot have the same total space. 

As for having non-isomorphic Lefschetz fibrations in general (so both can have the same number of reducible fibers, or even none), the smallest possible genus among Park and Yun's examples of non-isomorphic Lefschetz fibrations is $g=5$ for a pair of knot surgered $E(2n)$ with $n=1$ where the knots are $2$-bridge knots of genus $2$. This is now improved to $g=3$ by our examples, leaving out $g=2$ as the smallest possible genus. 

We speculate that $g=2$ examples with transitive monodromy and without reducible fibers do not exist. 
Indeed, it is known that any such a Lefschetz fibration is isomorphic to a fiber-sum whose components are either of the two basic Lefschetz fibrations. 
Furthermore, the number of the two basic fibrations in the fiber-sum decomposition is uniquely determined by the number of Lefschetz singularities in the original fibration (see \cite[Corollary 0.2]{SiebertTian} for details), in particular the isomorphic class of such a fibration is uniquely determined by the number of Lefschetz singularities. This rigidity then implies that the only counter-examples can come from fibrations with reducible fibers or intransitive monodromies. This concludes our interlude.

\end{remark}


The examples of non-isomorphic Lefschetz fibrations with topologically isotopic fibers we have constructed here are all on symplectic $4$-manifolds of $\kappa =0$. We hope to address the same question for the remaining Kodaira dimensions in a future work. 

On the other hand, given higher genus fibrations with many exceptional sections and several (braiding) lantern factorizations embedded in them (which are hard to produce!), one can turn our strategy of the proof above into a recipe to produce examples of \textit{arbitrarily many} pairwise non-isomorphic Lefschetz fibrations with pairwise ambiently homeomorphic fibers. 

Nevertheless, the following question remains open: 

\begin{question} \cite{ParkYun}
Are there infinitely many non-isomorphic Lefschetz fibrations of the same genera on any symplectic $4$-manifold? 
\end{question}


\vspace{0.2in}
\section{Exotic Lefschetz pencils and exotic embeddings of surfaces} \label{Exotic}

A pair of $4$-manifolds $X_i$, $i=0,1$, that are pairwise homeomorphic but not diffeomorphic, is commonly called an \textit{exotic pair} of $4$-manifolds. 
Pairs that are both symplectic are particularly interesting in regards to the symplectic botany problem, which asks about the diversity of symplectic structures supported in the same homeomorphism class. Similarly, a pair of Lefschetz pencils\,/\,fibrations $(X_i,f_i)$ is called exotic if $X_i$ constitute an exotic pair of symplectic $4$-manifolds and $f_i$ have the  same fiber genus and the same number of base points. Up to date, the only known examples are some particular families of exotic Lefschetz \textit{fibrations}: for $X_i=E(n)_{K_i}$ knot surgered elliptic surfaces, it was shown by Fintushel and Stern that for $K_i$  fibered knots with the same genus $g$ but different Alexander polynomials, one obtains genus-$(2g+n-1)$ exotic Lefschetz fibrations $(X_i,f_i)$ --- which however do not yield pencils. (These are discussed in detail in our Appendix.) However, although every symplectic $4$-manifold admits a Lefschetz \textit{pencil} by Donaldson, there are no known exotic pairs of Lefschetz pencils up to date. Our goal in this final section is to present the first examples of this kind.

\begin{theorem} \label{ExoticLP}
There are genus-$3$ exotic Lefschetz pencils $(X_i,f_i)$, $i=0,1$, with symplectic Kodaira dimension $\kappa(X_i)=i$, where $X_i$ are homeomorphic to $\K \# \CPb$. Moreover, there are similar examples with arbitrarily high genus and the same topology for the singular fibers on higher blow-ups of homotopy $\K \# \CPb$s.
\end{theorem}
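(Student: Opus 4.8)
The plan is to realize the two pencils as a genus-$3$ symplectic Calabi--Yau pencil and a Kodaira-dimension-one companion sharing the same fiber genus and the same number of base points, and then to separate them by the smooth invariance of $\kappa$. First I would fix the $\kappa=0$ member. Starting from the genus-$3$ symplectic Calabi--Yau Lefschetz fibration of Proposition~\ref{lem_signature_genus3LF} on a four-fold blow-up of a symplectic Calabi--Yau $\K$ surface, I would pass to $(X_{(2,1,1)},f_{(2,1,1)})$ on $\K\#3\CPb$ (which carries a $2$-section $S_{12}$ and two honest $(-1)$-sphere sections $S_3,S_4$) and then blow down $S_3,S_4$, turning them into base points. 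This produces a genus-$3$ pencil $(X_0,f_0)$ with two base points on a four-manifold homeomorphic to $\K\#\CPb$, whose remaining exceptional class $S_{12}$ is a $2$-section. By Theorem~\ref{SCYLF} its total space is a blow-up of a symplectic Calabi--Yau, so $\kappa(X_0)=0$; recording $\eu=25$, $\sigma=-17$, $b^+=3$, $\pi_1=1$ and odd intersection form fixes the homeomorphism type by Freedman.

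Next I would build the $\kappa=1$ member $(X_1,f_1)$ so as to keep fiber genus $3$ and the same two base points. The mechanism is a Fintushel--Stern knot surgery with a genus-$1$ fibered knot (the trefoil, with fiber monodromy $t_{a}t_{b}$), performed along an embedded square-zero torus that I would exhibit inside the monodromy of $(X_0,f_0)$ as a chain/matching configuration of non-separating vanishing cycles disjoint from the base points and from the exceptional multisection. Realizing this surgery as a monodromy substitution in the framed mapping class group via Theorem~\ref{mainthm} keeps the fiber genus, the base-point structure, and $\eu,\sigma$ intact; arranging the substitution to trade a block of non-separating twists for an equinumerous block of non-separating twists preserves the singular-fiber topology exactly. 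Since knot surgery along a $\pi_1$-trivial torus preserves simple connectivity and the parity of the form, $X_1$ is again homeomorphic to $\K\#\CPb$, while its minimal model becomes the knot-surgered elliptic surface $E(2)_{K}$ with $\kappa_{\min}=2[F]\neq0$, hence $\kappa(X_1)=1$.

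With both pencils in hand, homeomorphism of $X_0$ and $X_1$ follows from Freedman (both simply connected with identical, odd intersection form), and non-diffeomorphism follows from $\kappa$ being a smooth invariant (Preliminaries): $\kappa(X_0)=0\neq1=\kappa(X_1)$. Equivalently one may separate them by $\SW$ basic classes, the knot-surgered side carrying the extra classes dictated by the Alexander polynomial $\Delta_K$, so that $X_1$ has strictly more basic classes than the blow-up $X_0$. For the higher-genus family I would apply the degree-doubling construction exactly as in the proof of Theorem~\ref{nonisomLFs}, simultaneously to $(X_0,f_0)$ and $(X_1,f_1)$, so that at each step the genus and number of base points change in lockstep by $\tilde g=2g+m-1$ and $\tilde m=4m$. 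Doubling adds the same new vanishing cycles to both sides and is supported away from the localized knot-surgery region, so the full singular-fiber topology agrees across each pair, while the knot-surgery formula keeps both $\kappa$ and the homeomorphism type on higher blow-ups of homotopy $\K\#\CPb$s.

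The hard part will be the second step: arranging the knot surgery entirely within the Lefschetz-pencil framework. Concretely, I must exhibit the square-zero torus as an explicit sub-configuration of vanishing cycles in the symplectic Calabi--Yau monodromy that is disjoint from the two base points, verify that the resulting monodromy substitution is again a positive factorization of the \emph{same} boundary multitwist with the same base points, and confirm that it genuinely alters the canonical class rather than merely the boundary framings. Matching the base-point count between the two sides, checking that the substitution changes $\kappa$ while leaving $\pi_1$ trivial, and ensuring the trade is non-separating-for-non-separating (so the singular-fiber topology is preserved in the doubled family) are the delicate points; everything downstream---homeomorphism by Freedman and non-diffeomorphism by the smooth invariance of $\kappa$---is then formal.
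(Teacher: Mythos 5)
Your $\kappa=0$ half is essentially sound: blowing down $S_3,S_4$ in $(X_{(2,1,1)},f_{(2,1,1)})$ gives a genus-$3$ pencil with two base points on a manifold homeomorphic to $\K\#\CPb$ whose total space is still a blow-up of the SCY minimal model (the paper instead blows down the single section $S_4$ of $(X_{(3,1)},f_{(3,1)})$, getting a one-base-point pencil, but either choice works for this half, and your characteristic numbers check out). The genuine gap is your $\kappa=1$ member. There is no mechanism --- in Theorem~\ref{mainthm} or anywhere else --- for realizing Fintushel--Stern knot surgery as a monodromy substitution that preserves the fiber genus and the base locus. A square-zero torus swept out by a chain or matching configuration of vanishing cycles is \emph{vertical}: it meets fibers in circles, not points, so surgery along it is incompatible with retaining the given genus-$3$ fibration; the known Lefschetz fibrations on knot surgered elliptic surfaces $E(n)_K$ have fiber genus $2g(K)+n-1$, arise from an entirely different construction (gluing in $S^1\times M_K$, cf.\ the Appendix), and carry no exceptional sections --- which is precisely why they fail to give pencils, the very obstruction this theorem is meant to overcome. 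Your further claim that the minimal model of the surgered manifold ``becomes $E(2)_K$'' presupposes that the paper's SCY homotopy $\K$ is the standard $E(2)$, a question the paper explicitly leaves open. The paper's actual route to $\kappa=1$ stays inside its own toolkit: perform a \emph{second} braiding-lantern substitution (Lemma~\ref{lem_lantern_relation}) at part (b) of the factorization \eqref{genus3factor}, braiding the \emph{same} pair $S_1,S_2$ again, so that the exceptional sphere bisection $S_{12}$ becomes a self-intersection $0$ torus bisection. The resulting $(X_{([2],1,1)},f_{([2],1,1)})$ then has exceptional spheres meeting the fiber only $2<2g-2$ times, so by Theorem~\ref{SCYLF} it is not an SCY fibration; computing $c_1^2=2\eu+3\sigma=0$ on its minimal model yields $\kappa=1$, and blowing down $S_3$ produces the desired genus-$3$, one-base-point pencil on a manifold homeomorphic to $\K\#\CPb$.

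Your higher-genus step also does not match what is actually needed. Even the paper's genus-$3$ pair fails to have the same singular-fiber topology ($f_0$ has one reducible fiber, $f_1$ has none), and simultaneous doubling propagates exactly this discrepancy; that is why the ``moreover'' clause is proved differently: one applies the \emph{distinct} partial doubling sequences $D=[1]$ and $D'=[2,3]$ of Lemma~\ref{matching} to two \emph{different} pencils --- Smith's genus-$8$ pencil on the $\K$ surface with $14$ base points, and the two-base-point genus-$3$ pencil of Kodaira dimension $1$ --- landing on genus-$16$ fibrations on the topological manifold $\K\#17\,\CPb$ with matching singular fibers, and then blows down equal numbers of sections. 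Your assertion that the knot surgery ``preserves the singular-fiber topology exactly'' is the only thing that would make simultaneous doubling suffice, and it is part of the unproved step; so the higher-genus claim cannot be salvaged without first repairing the $\kappa=1$ construction.
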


\noindent Note that since any symplectic $4$-manifold $X$ with $\kappa = -\infty$ is \textit{diffeomorphic} to a rational or ruled surface, and any minimal $X$ with $\kappa=2$ has $c_1^2>0$, this is the best possible result one can obtain for varying the Kodaira dimensions within the same homeomorphism class.

Lastly, in Subsection~\ref{Knottings}, we will show that similar techniques can be employed to produce exotic embeddings of surfaces.

\vspace{0.15cm}
\subsection{Constructing pairs of exotic Lefschetz pencils}  \ 

Recall the monodromy factorization of the symplectic Calabi Yau Lefschetz fibration $(X_{(1,1,1,1)}, f_{(1,1,1,1)})$ we have produced in Equation~\ref{genus3factor}. A monodromy substitution at (a) (resp. (b)) amounts to rationally blowing-down a $(-4)$-sphere split off from the regular fiber by the $4$ vanishing cycles in (a) (resp. (b)), for which there are two possibilities: one can blow-down the $(-4)$-sphere intersecting the sections $S_1, S_2$ or the one intersecting $S_3, S_4$. (This choice was implicitly made when producing our earlier examples by each time indicating which sections we were braiding.) Thus, we can blow-down two disjoint $(-4)$-spheres in $(X_{(1,1,1,1)}, f_{(1,1,1,1)})$ \textit{both} intersecting $S_1, S_2$ by monodromy substitutions along (a) and (b) simultaneously to produce a new Lefschetz fibration $(X',f')=(X_{([2],1,1)}, f_{([2],1,1)})$. The Euler characteristic, the signature and the fundamental group of $X_{([2],1,1)}$ is easily calculated from those of $X_{(1,1,1,1)}$ under rational and regular blowdowns, allowing us to conclude that it is \textit{homeomorphic} to $\K \# 2\CPb$. Using Theorem~\ref{mainthm} we see that $S_1, S_2$ together turn into a self-intersection $0$ torus bisection of $f_{([2],1,1)}$, whereas the $(-1)$-sphere sections $S_3$ and $S_4$ descends to sections of the new fibration as well. Since the minimal model of an SCY with $b^+=3$ should have the same rational homology as the $\K$ surface, by Theorem~\ref{SCYLF} this cannot be a symplectic Calabi-Yau Lefschetz fibration. Blowing-down $S_3$, we get a Lefschetz pencil $(X_1, f_1)$ on a symplectic $4$-manifold $X'$. Calculating $c_1^2=2\eu + 3 \sigma=0$ on the minimal model of $X_1$ we note that $\kappa(X_1)=1$.

On the other hand, let $(X_0, f_0)$ be the pencil obtained by blowing-down the SCY Lefschetz fibration $(X_{(3,1)}, f_{(3,1)})$ we constructed earlier along the only $(-1)$-sphere section, so $\kappa(X_0)=0$. Thus $(X_i, f_i)$, for $i=0,1$ is a pair of genus-$3$ Lefschetz pencils promised in Theorem~\ref{ExoticLP}. 

The only caviat in our construction of these exotic Lefschetz pencils is that $(X_1, f_1)$ has no reducible fibers, whereas $(X_0, f_0)$ has one reducible fiber. Below, we will show that,  if we compromise on the smallness of the pencil genus, we can also produce exotic Lefschetz pencils both having only irreducible fibers. 

In the arguments to follow, we will need a variant of the degree doubling procedure \cite{Smith2,AK}, introduced in \cite{BaykurLuttingerLF}. Degree doubling construction produces a new genus $\tilde{g}$ symplectic Lefschetz pencil $(X,\omega, \tilde{f})$ with $\tilde{m}$ base points from a given genus $g$ symplectic Lefschetz pencil $(X,\omega, f)$ with $m$ base points, where $\tilde{g}=2g+m-1$ and $\tilde{n}=4m$. It is described for Donaldson's pencils in Smith's work \cite{Smith2}, for pencils obtained via branched coverings of $\CP$ by Auroux and Katzarkov in \cite{AK}, and for arbitrary topological pencils by the first author in \cite{BaykurLuttingerLF} based on \cite{Smith2,AK} ---which is the one that suits to pencils constructed via monodromy factorizations. We define a \emph{partial double along $m \geq k \geq 1$ points} as the Lefschetz pencil one gets by \textit{first} symplectically blowing-up $(X, \omega, f)$ at $m-k$ points and then taking the double of the resulting pencil on $(\tilde{X}, \tilde{\omega}, \tilde{f})$, where $\tilde{X} = X \# (m-k) \CPb$. Moreover, if $(\tilde{X}, \tilde{\omega}, \tilde{f})$ is obtained from $(X, \omega, f)$ by a sequence of partial doublings, where in the very last step we in addition blow-up all the base points, then both the smooth $4$-manifold $\tilde{X}$ and the genus $\tilde{g}$ of $\tilde{f}$ are \emph{uniquely} determined by the initial pencil $(X, \omega, f)$ and the ordered tuple of integers $k_1, \ldots, k_d$, for each partial doubling along $k_j$ points. We can then blow-down the $(-1)$-sphere sections to produce a pencil. 

Following \cite{BaykurLuttingerLF}, we denote the latter sequence by $D=[k_1, \ldots, k_d]$, which is only subject to the condition $4k_{j} \geq k_{j+1} \geq 1$ for all $j$. The next lemma is a simple variation of \cite[Lemma~3.1]{BaykurLuttingerLF} proved in an identical way:

\begin{lemma} \label{matching}
Let $f$ and $f'$ be genus $g_0$ and $g'_0$ Lefschetz pencils on homeomorphic $4$-manifolds $X$ and $X'$ with $m_0$ and $m'_0$ base points, respectively. Two partial doubling sequences 
\[ D=[k_1, \ldots, k_d] \, \, \text{and} \, \, D'=[k'_1, \ldots, k'_{d'}] \, \]
applied to $f$ and $f'$, respectively, result in Lefschetz fibrations on $X \# M \CPb $ and $X' \# M \CPb$ with the same fiber genus $g$ if and only if
\vspace{-0.5cm} 
\begin{center}
\begin{align*}
M &= m_0+ 3\sum_{i=1}^d k_i = m'_0+ 3\sum_{i=1}^{d'} k'_i  \, \, \, \, \text{and} \\ 
g &= 2^d g_0 + \sum_{i=1}^d 2^{d-i} (k_i-1)= 2^{d'} g'_0 + \sum_{i=1}^{d'} 2^{d'-i} (k'_i-1) \, . 
\end{align*}
\end{center}
\end{lemma}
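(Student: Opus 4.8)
The plan is to track how the three relevant quantities---the fiber genus, the number of base points, and the number of exceptional ($\CPb$) summands of the ambient manifold---transform under a single partial double, and then to iterate along the sequence $D=[k_1,\dots,k_d]$ and solve the resulting recursions in closed form.

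First I would record the effect of one partial double along $k$ points on a genus $g$ pencil with $m$ base points. By definition this is the composite of two operations: symplectically blowing up $m-k$ of the base points, which adds $m-k$ copies of $\CPb$ and yields a genus $g$ pencil with $k$ base points; followed by the degree doubling, which by the formulas quoted above fixes the underlying symplectic $4$-manifold and sends $(g,k)\mapsto(2g+k-1,\,4k)$. Hence one partial double sends a genus $g$ pencil with $m$ base points to a genus $2g+k-1$ pencil with $4k$ base points while enlarging the blow-up count by exactly $m-k$. The admissibility condition $4k_j\ge k_{j+1}$ is precisely what ensures that at the next step there are enough base points (namely $4k_j$ of them) to leave $k_{j+1}$ of them unblown, so every intermediate blow-up is well defined.

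Next I would set up the recursions. Writing $g_i$ and $m_i$ for the genus and number of base points after the $i$-th step, with initial data $g_0,m_0$, one has $m_i=4k_i$ and $g_i=2g_{i-1}+k_i-1$. A straightforward induction solves the latter as $g_d=2^d g_0+\sum_{i=1}^d 2^{d-i}(k_i-1)$, which is precisely the claimed expression for $g$. For the total number $M$ of exceptional summands I would add the contribution $m_{i-1}-k_i$ of each partial double to the extra $4k_d$ blow-ups carried out in the very last step, where all remaining base points are blown up to pass from a pencil to a fibration; substituting $m_{i-1}=4k_{i-1}$ for $i\ge2$ and telescoping collapses the sum to $M=m_0+3\sum_{i=1}^d k_i$, as claimed.

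Finally, applying this bookkeeping to both $D$ on $f$ and $D'$ on $f'$ produces fibrations of genera $g,g'$ on $X\# M\CPb$ and $X'\# M'\CPb$ respectively. Since $X$ and $X'$ are homeomorphic, the two outputs sit on manifolds with the same blow-up count and share the same fiber genus exactly when $M=M'$ and $g=g'$, which unwinds verbatim to the two displayed equalities. I do not expect a genuine obstacle, as the entire argument (identical in spirit to \cite[Lemma~3.1]{BaykurLuttingerLF}) is elementary bookkeeping; the one place that demands care is the blow-up count in the final step, where the $4k_d$ base points created by the last doubling must not be conflated with those blown up to convert the resulting pencil into a fibration, and where one must check that the admissibility inequalities keep each intermediate blow-up legitimate.
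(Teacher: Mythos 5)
Your proof is correct and takes essentially the same approach as the paper: the paper offers no separate argument, stating only that the lemma ``is a simple variation of \cite[Lemma~3.1]{BaykurLuttingerLF} proved in an identical way,'' and that argument is precisely the recursion-and-telescoping bookkeeping you carry out (one partial double sends $(g,m)\mapsto(2g+k-1,4k)$ while adding $m-k$ copies of $\CPb$, plus the final $4k_d$ blow-ups converting the last pencil into a fibration). Your solution of the recursions, the telescoping giving $M=m_0+3\sum_i k_i$, and the role of the admissibility condition $4k_j\geq k_{j+1}\geq 1$ all match what the cited proof does.
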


Now let $(X, f)$ be a genus-$8$ Lefschetz pencil on the $\K$ surface with $14$ base points \cite{Smith2}, so $\kappa(X)=0$. We can then apply Lemma~\ref{matching} to $(X,f)$ and $(X',f')$ using the (very short!) partial doubling sequences 
\[ D=[1] \, \, \text{and} \, \, D'=[2,3] \, , \]
to produce a pair of genus $g$ Lefschetz fibrations on the topological $4$-manifold $\K \# M \CPb$ with $g=16$ and $M=17$. Blowing-down the same number of \linebreak $(-1)$-sphere sections (and at most $4$ of them, as the doubling sequence $D$ results in $4$ base points) in both we obtain the desired exotic pair of Lefschetz pencils $(X_0, f_0)$ and $(X_1, f_1)$ (overriding our earlier picks of $(X_i, f_i)$) where $X_0$ now denotes (a blow-up) of $\K \# 13 \CPb$ and $X_1$ is homeomorphic to it. 

Applying further simultaneous doublings to any one of the exotic Lefschetz pencils $(X_i,f_i)$ we produced above give us exotic pairs of pencils of arbitrarily high genera. This completes the proof of Theorem~\ref{ExoticLP}. 

\begin{remark}
After one more substitution along part (b) in the monodromy of $(X_{([2],1,1)}, f_{([2],1,1)})$, we can obtain another \textit{fibration} $(X_1,f_1)=(X_{([3],1)}, f_{([3],1)})$. Letting $(X_0, f_0)=(X_{(4)}, f_{(4)})$ be the SCY Lefschetz \textit{fibration} produced in Section~\ref{Stipsicz}, we then obtain a pair of exotic genus-$3$ Lefschetz fibrations $(X_i, f_i)$ with $\kappa(X_i)=i$, both of which having one reducible fiber --- and thus, with exact same topology.
\end{remark}

\vspace{0.15cm}
\subsection{Exotic embedings of symplectic surfaces} \label{Knottings} \

We call two surfaces $F_i \subset X$, $i=1,2$, \textit{exotically embedded} in $X$ if there exists an ambient homeomorphism of $X$ taking $F_1$ to $F_2$ but there exists no such diffeomorphism. Such symplectic surfaces are harder to produce: for instance, the work of Siebert and Tian shows that up to isotopy there is a unique symplectic surface in the homology class of an algebraic curve of degree $\leq 17$ in $\CP$ \cite{SiebertTian}. In contrast, Finashin \cite{F}, and H.-J.\,Kim \cite{K} \,(also see \cite{KR}) constructed knotted surfaces in $\CP$ that are not isotopic to algebraic curves, which can be seen to be not symplectic. The latter rely on a construction method of Fintushel and Stern \cite{FSsurfaces}, called (twisted) rim-surgery, and up to date this has been the only way of producing exotic embeddings of surfaces -- and curiously, only producing symplectic tori when asked to lie in the same homology class. The purpose of this section is to present a new way of constructing exotically embedded orientable surfaces: 

\begin{theorem} \label{exoticknotting}
There is a pair of genus-$3$ surfaces $F_i$ exotically embedded in a blow-up of a symplectic Calabi-Yau $\K$ surface such that $F_i$ is symplectic with respect to deformation equivalent symplectic forms $\omega_i$ on $X$, for $i=1,2$. 
\end{theorem}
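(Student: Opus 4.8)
The plan is to exhibit a single blow-up $X$ of a symplectic Calabi-Yau $\K$ surface containing two genus-$3$ symplectic surfaces $F_0, F_1$ which are ambiently homeomorphic but not ambiently diffeomorphic. The two surfaces will be manufactured from the two exotic constructions underlying Theorem~\ref{ExoticLP}, brought into a common ambient $4$-manifold by the mirror rational blowdown of Proposition~\ref{Gompflemma}. I would start from the genus-$3$ symplectic Calabi-Yau $\K$ fibration $(X_{(1,1,1,1)}, f_{(1,1,1,1)})$ of Equation~\ref{genus3factor} with its four exceptional sections $S_1,\dots,S_4$, and use the braiding lantern relation of Lemma~\ref{lem_lantern_relation} to braid these sections into genus-$3$ symplectic multisections in two prescribed ways --- one following the $(3,1)$-pattern ($\kappa=0$ branch) and one following the $([2],1,1)$-pattern ($\kappa=1$ branch) of the proof of Theorem~\ref{ExoticLP}. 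Because the two lantern configurations involved share a common $(-1)$-sphere section, Proposition~\ref{Gompflemma} guarantees that the two substitution sequences land on the \emph{same} smooth $4$-manifold $X$, and since each braiding lantern substitution is a symplectic rational blowdown, the two Gompf--Thurston forms $\omega_0,\omega_1$ are identified by Gompf's diffeomorphism and are therefore deformation equivalent; each $F_i$ is $\omega_i$-symplectic, of self-intersection $0$, with genus $3$ pinned down by the symplectic adjunction formula and Theorem~\ref{mainthm}.

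Next I would dispose of the topological half. Both $F_i$ have genus $3$, self-intersection $0$, represent the same homology class, and --- because the fibrations retain a section --- have simply connected complements. With $X$, the genus, the self-intersection and the homology class all fixed, a Freedman-type uniqueness for a surface together with its simply connected complement produces an ambient homeomorphism of $X$ carrying $F_0$ to $F_1$.

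For the smooth distinction I would use the embeddings to recover the exotic pair of Theorem~\ref{ExoticLP}. An ambient diffeomorphism $(X,F_0)\to(X,F_1)$ restricts to a diffeomorphism of the complements $X\setminus\nu F_0 \cong X\setminus\nu F_1$; performing on each complement the canonical blow-down determined by the braiding data (the $(-1)$-sphere section descending from the construction) reconstructs the closed symplectic $4$-manifolds $X_0$ and $X_1$ of Theorem~\ref{ExoticLP}, with $\kappa(X_0)=0$ and $\kappa(X_1)=1$. Since a complement diffeomorphism would intertwine these canonical blow-downs, it would force $X_0\cong X_1$, contradicting the fact that $\kappa$ is a smooth invariant \cite{Li3}. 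Hence no ambient diffeomorphism takes $F_0$ to $F_1$, and the embeddings are exotic; the higher-genus examples follow by applying the degree doubling of Lemma~\ref{matching} to both constructions simultaneously, exactly as in the proof of Theorem~\ref{ExoticLP}.

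The crux --- and the step I expect to be hardest --- is the smooth distinction, because the ambient manifold $X$ and the deformation class of its symplectic form are fixed, so the difference between $F_0$ and $F_1$ is invisible to every invariant of $X$ alone and lives entirely in the embedding. Making the argument rigorous requires showing that the reconstruction of $X_0$ and $X_1$ is genuinely determined by the ambient-diffeomorphism type of the pair (i.e. that the relevant $(-4)$- and $(-1)$-spheres are canonically associated to $F_i$ up to ambient diffeomorphism), and then invoking Taubes' theorem together with the blow-up formula to separate $\kappa=0$ from $\kappa=1$. The careful bookkeeping of exceptional classes and of the normal data of the two surfaces --- ensuring they are truly homologous with identical framings --- is the delicate part, to be handled as in the computations of Propositions~\ref{lem_signature_genus3LF} and~\ref{lem_pi1_genus2LF}.
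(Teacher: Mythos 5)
Your construction collapses at its first step: the two branches you choose --- the $(3,1)$-pattern and the $([2],1,1)$-pattern --- do \emph{not} land on the same smooth $4$-manifold, and Proposition~\ref{Gompflemma} cannot be invoked to identify them. These two branches are precisely the exotic pair of Theorem~\ref{ExoticLP}: the paper computes $\kappa=0$ for the $(3,1)$-branch and $\kappa=1$ for the $([2],1,1)$-branch, and since the symplectic Kodaira dimension is a diffeomorphism invariant \cite{Li3}, the two total spaces are homeomorphic but provably \emph{not} diffeomorphic. (You even label them the ``$\kappa=0$ branch'' and the ``$\kappa=1$ branch'' and then assert they coincide smoothly, which is self-contradictory.) Gompf's lemma requires two \emph{disjoint} $(-4)$-spheres each meeting a common $(-1)$-sphere exactly once; the configurations realizing your two branches do not satisfy this --- the choice made at part (b) is between $(-4)$-spheres arising from the two complementary sides of the same four lantern curves, which meet at the Lefschetz critical points, and the one used in the $([2],1,1)$-branch meets $S_{12}$ twice rather than meeting a $(-1)$-sphere once --- and indeed the conclusion fails, as Theorem~\ref{ExoticLP} itself shows. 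Consequently there is no single ambient $X$ containing your two surfaces, the notion of exotic embedding never gets started, and your smooth-distinction argument (deriving $X_0\not\cong X_1$ from a hypothetical diffeomorphism of complements) is circular: it merely restates that your surfaces live in non-diffeomorphic manifolds. A secondary error: the multisections obtained by braiding the exceptional sections are spheres and tori (by the genus formula of Theorem~\ref{mainthm} they have genus $0$ or $1$ in these constructions), not genus-$3$ surfaces.

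The paper's proof stays entirely inside Kodaira dimension zero, which is exactly what makes a common ambient manifold possible. Starting from $(X_{(2,1,1)},f_{(2,1,1)})$, the second braiding lantern substitution is applied either at part (c), giving $(X_{(2,2)},f_{(2,2)})$ whose exceptional classes $S_{12},S_{34}$ each meet the fiber twice, or at part (b), giving $(X_{(3,1)},f_{(3,1)})$ whose exceptional classes $S_{123},S_4$ meet the fiber three times and once. The corresponding $(-4)$-spheres lie in different singular fibers, hence are disjoint, and each meets the exceptional section $S_3$ exactly once, so Proposition~\ref{Gompflemma} genuinely applies and $X_{(2,2)}\cong X_{(3,1)}$. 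The exotic surfaces are then the \emph{fibers} $F_1,F_2$ of the two fibrations. The homeomorphism of pairs uses simple connectivity of the complements, indivisibility and non-characteristicness of $[F_i]$, Wall's theorem plus Freedman, and Sunukjian's topological isotopy theorem \cite{Sunukjian}; the smooth distinction uses Li's theorem \cite{Li_1999} that any diffeomorphism of a symplectic $4$-manifold that is not rational or ruled permutes exceptional classes, which is impossible here because the exceptional classes meet $F_1$ with pattern $(2,2)$ and $F_2$ with pattern $(3,1)$. It is this intersection-pattern argument --- not a Kodaira-dimension comparison, which is unavailable once both surfaces sit in one manifold --- that detects the exotic embedding.
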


\begin{proof}
In Section~\ref{genus3counter}, we constructed a Lefschetz fibration $(X_{(2,1,1)},f_{(2,1,1)})$ by a braiding lantern substitution at part (a) of \eqref{genus3factor}. We can now apply another braiding lantern substitution at part (c), which yields to the genus-$3$ Lefschetz fibration $(X_{(2,2)},f_{(2,2)})$ with two $(-1)$-sphere bi-sections $S_{12}$ and $S_{34}$ we obtained earlier, or at part (b), which yields to a new genus-$3$ Lefschetz fibration $(X_{(3,1)},f_{(3,1)})$ with $(-1)$-sphere $3$-section $S_{123}$ and a $(-1)$-section $S_4$. 

Since $X_{(2,2)}$ and $X_{(3,1)}$ are obtained from $X_{(2,1,1)}$ by rational blowdowns along $(-4)$-spheres $V_1$ and $V_2$ (prescribed by the Lantern curves in parts (c) and (b)) both intersecting the exceptional sphere $S_3$ at one point, they are diffeomorphic by Proposition~\ref{Gompflemma}. Let $F_1, F_2$ be regular fibers of $f_{(2,2)}$ and $f_{(3,1)}$, respectively. 

There exists a pairwise homeomorphism between $(X_{(2,2)}, F_1)$ and $(X_{(3,1)}, F_2)$: To see this, we first observe that the vanishing cycles in the complement of $X_{(2,2)} \setminus F_1$, and respectively of $X_{(3,1)} \setminus F_2$, allows us to easily compute $\pi_1(X_{(2,2)} \setminus F_1) = 1 = \pi_1(X_{(3,1)} \setminus F_2)$.  So both homology classes $[F_i]$ are indivisible. Moreover, $F_1 \cdot S_{12} = 2$ but $S_{12}^2 = -1$, whereas for $F_2$, $f_{(3,1)}$ has a reducible fiber component $R$, so $F_1 \cdot R =0$ but $R^2 = -1$. So both $[F_i]$ are not characteristic. Since $b_2 - \sigma \geq 4$ and $\pi_1=1$ for $X_{(2,2)} \cong X_{(3,1)}$, by Wall's theorem on automorphisms of the intersection form and Freedman's topological h-cobordism theorem (see for example \cite{Scorpan} p.152-153), we get a homeomorphism between $X_{(2,2)}$ and $X_{(3,1)}$ matching the homology classes of $F_1$ and $F_2$. Finally, viewing the two surfaces in the same manifold under this homeomorphism, we can invoke \cite{Sunukjian} to find a topological isotopy between them, which yields the desired homeomorphism between the pairs $(X_{(2,2)}, F_1)$ and $(X_{(3,1)}, F_2)$.

On the other hand, since $X_{(2,2)}, X_{(3,1)}$ are symplectic Calabi-Yaus, and thus not rational or ruled, by Li's work in \cite{Li_1999}, any diffeomorphism between them maps exceptional classes to exceptional classes in the same homology classes. However, in $X_{(2,2)}$ the two exceptional classes $S_{12}, S_{34}$ intersect $F_1$ both twice, whereas in $X_{(3,1)}$ we have two exceptional classes $S_{123}, S_4$ intersecting $F_2$ thrice and once. Hence, there is no pairwise diffeomorphism between $(X_{(2,2)}, F_1)$ and $(X_{(3,1)}, F_2)$.

Now if we let $X=X_{(3,1)}$ and identify $F_1$ with its image under the diffeomorphism between $X_{(2,2)}$ and $X_{(3,1)}$, we conclude that $F_1, F_2$ is a pair of exotically embedded surfaces in $X$. Lastly, to prove our additional claim on the existence of deformation equivalent symplectic forms $\omega_i$ on $X$ with respect to which $F_i$ are symplectic, we first perturb the Lefschetz fibration $f_{(2,1,1)}$ so that each quadruple of vanishing cycles appearing in part (c) and (b) of the monodromy factorization lie on the same singular fiber, forming a reducible  $(-4)$-sphere fiber component $V_i$. We can then equip $(X_{(2,1,1)},f_{(2,1,1)})$ with a compatible symplectic form with respect to which both $V_i$s and the section $S_3$ are symplectic. Now, by the work of McDuff and Symington, who showed that Gompf's diffeomorphism we employed here can be interpreted as a symplectic $4$-sum operation, the symplectic $4$-manifolds we produce by rational blowdowns of $V_i$ are symplectic deformation equivalent \cite{MS}, with $F_i$ symplectic surfaces in them. We thus obtain the desired symplectic forms on $X$ by pulling-back the latter form on $X_{(2,2)}$. 
\end{proof}

\vspace{0.5in}
\appendix

\section{Seiberg-Witten basic classes of homotopy $\K$ surfaces via mapping class group factorizations} \label{SWapp} 

\subsection{Seiberg-Witten basic classes of symplectic $4$-manifolds} \label{SW} \

Let $X$ be a symplectic $4$-manifold with $b^+(X) > 1$. We further assume that it has an integral symplectic form $\omega$, which can always be achieved by replacing a given form with a multiple of a rational symplectic form approximating it. By Taubes, for a generic almost complex structure $J$ on $(X, \omega)$, any Seiberg-Witten basic class $\beta \in H_2(X; \Z)$ can be represented by a sum of $J$-holomorphic curves $C_i$ in $X$ \cite{T, T2}. Moreover, each component of the representative of $\beta = \Sigma_i [C_i]$ is an embedded smooth curve unless it is a torus of self-intersection zero (in which case the image of the curve is still smoothly embedded, but the parametrization is a multiple cover) or a sphere of negative self-intersection. Since $J$ is $\omega$ tamed, each $C_i$ is a symplectic surface in $(X, \omega)$. 

Since the number of basic classes of a $4$-manifold is finite, so is the collection of the symplectic surfaces $C_i$, sums of which represent the basic classes in $(X, \omega)$. As noted by Donaldson and Smith \cite[Proposition 2.9]{Donaldson_Smith_2003} replacing $\omega$ with a sufficiently high multiple $k \omega$, we can then assume that there exists a symplectic Lefschetz pencil on $X$ for which all $C_i$ are multisections (``standard surfaces'' in the language of \cite{Donaldson_Smith_2003}). By the blow-up formula for Seiberg-Witten classes, we conclude that after passing to a blow-up of $X$ we get a symplectic Lefschetz fibration $f: \tilde{X} \to S^2$ where all basic classes are represented by a collection of symplectic surfaces $C_i$ and the exceptional spheres $E_j$. Hence, each Seiberg-Witten basic class of $\tilde{X}$ is represented by a multisection (possibly with several components). 

To sum up, combining the works of Taubes and Donaldson, after passing to a blow-up $\tilde{X}$, one can represent all Seiberg-Witten classes of a symplectic $4$-manifold $X$ as multisections with respect to a Lefschetz fibration. We shall note that this is merely an existence result, as the construction of such a Lefschetz fibration is not explicit. 

\vspace{0.15cm}
\subsection{Sample calculation: basic classes of knot surgered elliptic surfaces} \label{knotsurgery} \

We will now present explicit monodromy factorizations in the framed mapping class group capturing all basic classes of knot surgered elliptic surfaces as multisections of certain Lefschetz fibrations on them.  

Here is a quick review of the knot surgery construction: Let $X$ be a smooth $4$-manifold and $T\subset X$ an embedded torus with self-intersection $0$. For a fibered knot $K\subset S^3$, let $M_K$ denote the $3$-manifold obtained by $0$-surgery along $K$ from $S^3$, then $M_K$ admits a natural fibration over $S^1$, where fibers are capped of Seifert surfaces. In turn, $S^1 \x M_K$ is a genus-$g$ symplectic surface bundle over $T^2$, with $g$ the Seifert genus of $K$. For $\mu_K$ the meridian of $K$ in $S^3$, we obtain a torus $S^1 \x \mu_K$ as a symplectic section of this bundle. We then define a \emph{knot surgered $4$-manifold} $X_K$ as the generalized fiber sum $X_K = X\setminus \nu T \cup_{S^1 \x \mu_K} S^1\times M_K$, which can be performed symplectically. (When $K$ is not fibered, the same construction --for $M_K$ admitting an $S^1$-valued Morse function this time-- results in a new $4$-manifold which is not necessarily symplectic.) Fintushel and Stern \cite{Fintushel_Stern_1998} introduced this operation and proved that a Laurent polynomial associated with the Seiberg-Witten invariant of $X_K$ is the product of that of $X$ and the symmetrized Alexander polynomial of the knot $K$ for homologically essential $T$ in $X$. For $X=E(n)$,  all basic classes arise as multiples of the image of the elliptic fiber $T$ of $X$ in $X_K$. Moreover, assuming $K$ is a fibered knot with Seifert genus-$g$, the knot surgery $4$-manifold $E(n)_K$ admits a genus-$(2g+n-1)$ Lefschetz fibration $(E(n)_K, f_{n,K})$ \cite{Fintushel_stern_2004}. It is easy to see that $T$ becomes a bisection (i.e. a $2$-section) of this fibration. Capturing all basic classes of $X_K$ in this case therefore comes to identifying disjoint copies of $T$ via a monodromy factorization of an appropriate lift of $f_{n,K}$ to the framed mapping class group.

Let $A_1,\ldots, A_{2n-2}, B_1,\ldots, B_{2g+1}, C_1,C_2$ be simple closed curves in $\Sigma_{2g+n-1}$ as described in Figure~\ref{scc_knotsurgerybisection}. 

\begin{figure}[htbp]
\begin{center}
\includegraphics[width=120mm]{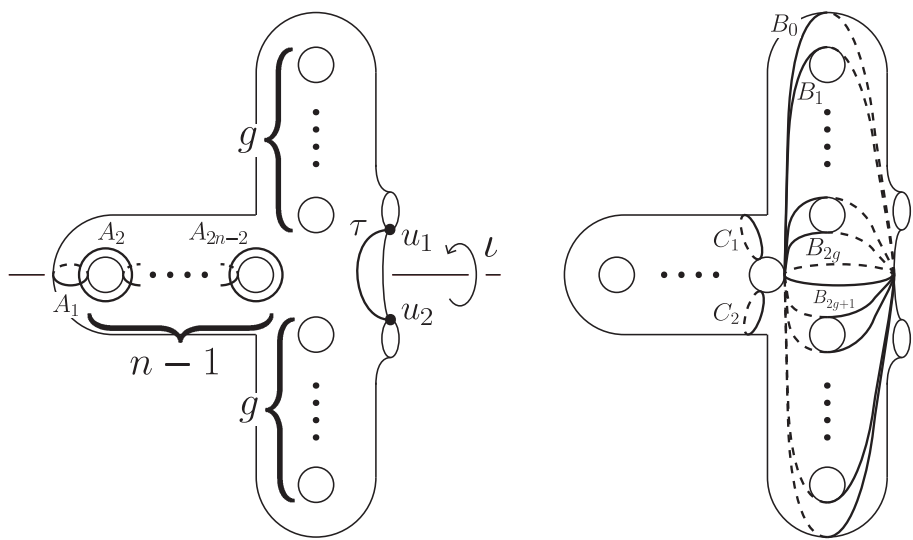}
\end{center}
\caption{Simple closed curves and a path in $\Sigma_{2g+n-1}^2$. }
\label{scc_knotsurgerybisection}
\end{figure}

We remove two disks $D_1, D_2$ from $\Sigma_{2g+n-1}$ as in Figure~\ref{scc_knotsurgerybisection} and take points $u_1, u_2$ on each boundary component of $\Sigma_{2g+n-1}^2 = \Sigma_{2g+n-1} \setminus (D_1\amalg D_2)$. 
Let $K$ be a fibered knot with genus-$g$ and $\varphi_K\in \Mod(\Sigma_g)$ a monodromy of $K$. 
We decompose $\Sigma_{2g+n-1}$ into three pieces: the upper $\Sigma_g$, the lower $\Sigma_g$ and the central $\Sigma_{n-1}$ in Figure~\ref{scc_knotsurgerybisection}, so that both of the disks $D_1$ and $D_2$ are contained in $\Sigma_{n-1}$. 
Let $\Phi_K$ be an element $\Mod(\Sigma_{2g+n-1})$ defined as follows: 
\[
\Phi_K = \varphi_K \# \id \# \id : \Sigma_g \#\Sigma_{n-1} \# \Sigma_g \rightarrow \Sigma_g \#\Sigma_{n-1} \# \Sigma_g. 
\]
The genus-$(2g+n-1)$ Lefschetz fibration $f_{n,K}: E(n)_K \rightarrow S^2$ mentioned above has the following monodromy factorization (see \cite{Fintushel_stern_2004}):
\[
\eta_{n,g} \eta_{n,g} \Phi_K(\eta_{n,g}) \Phi_K(\eta_{n,g}) = 1, 
\]
where $\eta_{n,g}$ is equal to $t_{A_{2n-2}}\cdots t_{A_1}t_{A_1} \cdots t_{A_{2n-2}} t_{B_0}\cdots t_{B_{2g+1}}$ and $\Phi_K(\eta_{n,g})$ is a factorization obtained from $\eta_{n,g}$ by substituting $A_i$ and $B_j$ in $\eta_{n,g}$ for $\Phi_K(A_i)$ and $\Phi_K(B_j)$, respectively. 

\begin{proposition}\label{prop_relation_surface}

The following equality holds in $\Mod(\Sigma_{2g+n-1}^2; \{u_1,u_2\})$: 
\[
t_{A_{2n-2}}\cdots t_{A_1}t_{A_1} \cdots t_{A_{2n-2}} t_{B_0}\cdots t_{B_{2g+1}} = t_{\delta_1} t_{\delta_2} \tilde{\tau}^{-1} \iota, 
\]
where $\delta_i$ is a simple closed curve in $\Sigma_{2g+n-1}^2$ parallel to the boundary component containing $u_i$, $\tilde{\tau}$ is a lift of a half twist along a path given in Figure~\ref{scc_knotsurgerybisection} as described in Figure~\ref{lift_monodromy1} and $\iota$ is an involution described on the left side of Figure~\ref{scc_knotsurgerybisection}. 

\end{proposition}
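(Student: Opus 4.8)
The plan is to prove the framed identity in two stages: first strip off the framing and verify the underlying relation in $\Mod(\Sigma_{2g+n-1})$, and then upgrade it to the framed statement by recording the branching and self-intersection data carried by the bisection $T$. To begin, I would apply the forgetful/capping homomorphism $i_\ast\colon \Mod(\Sigma_{2g+n-1}^2;\{u_1,u_2\})\to \Mod(\Sigma_{2g+n-1})$ that fills in the two boundary components and drops the marked points. Under $i_\ast$ each boundary-parallel twist $t_{\delta_i}$ becomes trivial, and the half twist $\tilde{\tau}$ of Figure~\ref{lift_monodromy1}, being supported in a disk swapping the two capped boundary disks, becomes isotopic to the identity, while $\iota$ descends to the interior involution. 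Thus the proposed equality projects to the unframed relation $\eta_{n,g}=\iota$ in $\Mod(\Sigma_{2g+n-1})$. This is exactly the statement that the configuration $A_1,\dots,A_{2n-2},B_0,\dots,B_{2g+1}$ is a symmetric chain adapted to $\iota$ and that $\eta_{n,g}$ realizes the ``half'' of the corresponding hyperelliptic/chain relation; I would confirm it either by the standard chain-relation algebra (checking the Dehn-twist count and the action on $H_1$), or, more cheaply, by invoking the known Fintushel--Stern factorization $\eta_{n,g}\,\eta_{n,g}\,\Phi_K(\eta_{n,g})\,\Phi_K(\eta_{n,g})=1$ \cite{Fintushel_stern_2004} together with $\iota^2=1$, which forces $\eta_{n,g}=\iota$ once the square-root is identified.

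The next stage is to lift this equality to the framed group. The curves $A_i,B_j$ are all $\iota$-equivariant, so both $\eta_{n,g}$ and $\iota$ live in the image of the centralizer $\pi_0\big(C(\Sigma_{2g+n-1};\iota)\big)$, and the involution $\iota$ is precisely the deck transformation of the double cover whose two branch points are the points where the self-intersection-$0$ torus bisection $T=S^1\times\mu_K$ meets the fiber. The boundary circles carrying $u_1,u_2$ are the push-offs around these two intersection points, so a choice of $T$ determines a canonical lift of $\eta_{n,g}$ to $\Mod(\Sigma_{2g+n-1}^2;\{u_1,u_2\})$. Since $\iota$ interchanges the two branch points, any such lift must interchange $u_1$ and $u_2$; comparing with the naive lift of $\iota$ (which also swaps them), the two lifts can differ only by a half twist $\tilde{\tau}^{\pm 1}$ as in Figure~\ref{lift_monodromy1} together with boundary twists $t_{\delta_1}^{a_1}t_{\delta_2}^{a_2}$. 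Here I would use the injectivity of the quotient map from Lemma~\ref{lem_involutionMCG} to see that the ambiguity in the lift is controlled entirely by these boundary-supported and branch-supported factors, reducing the problem to determining their exponents.

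It remains to compute the framing correction exactly. The sign of the half twist is fixed by Lemma~\ref{lem_lift_monodromy1}: tracking the distinguished dotted arc between $u_1$ and $u_2$ via the Alexander method \cite[Proposition~2.8]{Farb_Margalit_2011} shows the correction is $\tilde{\tau}^{-1}$ rather than $\tilde{\tau}$. For the boundary powers, I would invoke Theorem~\ref{mainthm}: assembling the four copies of the (framed) $\eta_{n,g}$ must reproduce $T$ as a genus-$1$, self-intersection-$0$ bisection, and the self-intersection formula $m=-\big(\sum a_i\big)+2k+r$ then forces $a_1=a_2=1$ for the building block, yielding the factor $t_{\delta_1}t_{\delta_2}$. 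I expect this last step---pinning down the precise $\pm$ of the half twist and the exact boundary exponents---to be the main obstacle, since the unframed relation $\eta_{n,g}=\iota$ is completely blind to this data; the genuine work is the careful Alexander-method bookkeeping on the arc joining $u_1$ and $u_2$, carried out against the local models for a regular branched point established earlier in this section.
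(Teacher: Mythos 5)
Your two-stage plan (verify the unframed relation, then pin down the framing correction) contains genuine gaps at both stages, and the second stage in particular cannot be completed as described. First, your ``cheap'' verification of $\eta_{n,g}=\iota$ in $\Mod(\Sigma_{2g+n-1})$ is invalid: the Fintushel--Stern relation says that $\eta_{n,g}^2\,(\Phi_K\eta_{n,g}\Phi_K^{-1})^2=1$, and elements of mapping class groups do not have unique square roots, so nothing here ``forces'' $\eta_{n,g}=\iota$; likewise, checking the Dehn-twist count and the action on $H_1$ cannot certify a relation, since the Torelli group is nontrivial. Second, and more seriously, your lifting step rests on a false description of the ambiguity: the kernel of the map $\Mod(\Sigma_{2g+n-1}^2;\{u_1,u_2\})\to\Mod(\Sigma_{2g+n-1})$ that caps the boundary and forgets the framing is generated by the boundary twists \emph{and by boundary-pushing maps along arbitrary loops in the surface} (an infinite point-pushing--type subgroup), not merely by $t_{\delta_1},t_{\delta_2}$ and $\tilde\tau^{\pm1}$. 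So even granting the unframed relation and that both sides swap $u_1$ and $u_2$, the framed relation is determined only up to an uncontrolled pushing element, and Lemma~\ref{lem_involutionMCG} does not close this gap by itself---it only says the equivariant-to-quotient map is injective (up to $[\iota]$), which is useful precisely when you carry out the computation equivariantly, not when you try to compare two abstract lifts. Finally, invoking Theorem~\ref{mainthm} to fix the exponents $a_1=a_2=1$ inverts the paper's logic: that theorem produces \emph{some} framed lift of the \emph{full} positive factorization of $f_{n,K}$ with its bisection, but it neither determines how the correction factors distribute among the four blocks $\eta_{n,g}$, $\Phi_K(\eta_{n,g})$, nor resolves the pushing ambiguity in a single block; in the paper the block relation of Proposition~\ref{prop_relation_surface} is proved first and the full framed factorization is assembled from it afterwards.

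For comparison, the paper's proof never passes through the unframed group at all: it cuts $\Sigma_{2g+n-1}^2$ along $C_1,C_2$ to get $\Sigma_{2g}^4$, observes that the $B_i$ are $\iota$-invariant so the product $t_{B_0}\cdots t_{B_{2g+1}}$ lives in $\pi_0(C(\Sigma_{2g}^4,U;\iota))$, and applies Lemma~\ref{lem_involutionMCG} twice---once for $\iota$ (Dehn twists descend to half twists on $\Sigma_g^2$) and once for a further involution $\kappa$ (half twists descend to point-pushing maps on a disk with $2g+2$ marked points)---so that the $B$-chain relation becomes an elementary point-pushing identity $\Push(\beta_0)\cdots\Push(\beta_{2g+1})=\Push(\mu)$, which is then lifted back by injectivity. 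The $A_i$-part is handled separately: $t_{C_1}t_{C_2}$ is rewritten via the chain relation as $(t_{A_{2n-3}}\cdots t_{A_1})^{2n-2}$, the factors are rearranged by Hurwitz moves, and a final Alexander-method check identifies $(t_{A_{2n-2}}\cdots t_{A_1})^{2n-1}\tilde\theta^{-1}[\iota|_{\Sigma_{2g}^4}]$ with $\iota$. If you want to salvage your outline, the equivariant descent is exactly the mechanism you are missing for controlling the framing data.
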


\begin{proof}
We cut the surface $\Sigma_{2g+n-1}^2$ along the curves $C_1, C_2$ to obtain the surface $\Sigma_{2g}^4$. 
Take points $u_3 \in C_1$ and $u_4\in C_2$. 
We denote the set $\{u_1,u_2,u_3,u_4\}$ by $U$ and the fixed points of $\iota$ in $\Sigma_{2g}^4$ by $v_1, v_2$. 
Since the simple closed curve $B_i$ is preserved by $\iota$, we can regard the Dehn twist $t_{B_i}$ as an element in $\pi_0(C(\Sigma_{2g}^4,U ;\iota))$ (for the definition of $C(\Sigma_{2g}^4,U;\iota)$, see Section~\ref{subsec_lemmainvolution}). 
The quotient space $\Sigma_{2g}^4/\iota$ is homeomorphic to $\Sigma_g^2$. 
The quotient map $/\iota: \Sigma_{2g}^4\rightarrow \Sigma_g^2$ induces the following homomorphism: 
\[
\iota_\ast : \pi_0(C(\Sigma_{2g}^4,U;\iota)) \rightarrow \Mod(\Sigma_{g}^2; \hat{U}, \{\hat{v}_1,\hat{v}_2\}), 
\]
where $\hat{U}$ and $\hat{v}_i$ are the images of $U$ and $v_i$, respectively, under $/\iota$ (see Figure~\ref{quotient_spaces}). 
\begin{figure}[htbp]
\begin{center}
\includegraphics[width=120mm]{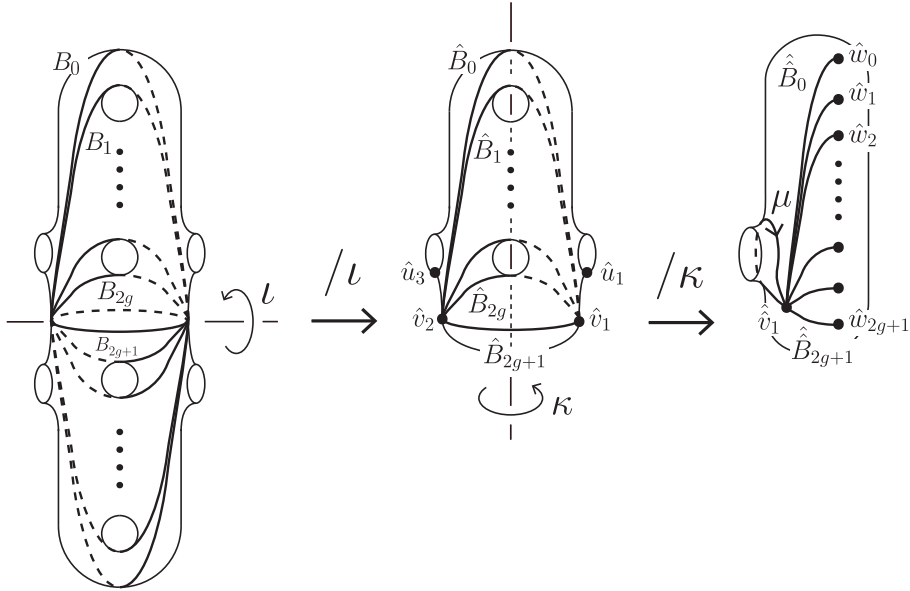}
\end{center}
\caption{The involutions $\iota$ and $\kappa$. }
\label{quotient_spaces}
\end{figure}
By Lemma~\ref{lem_involutionMCG} the kernel of $\iota_\ast$ is generated by the isotopy class of $\iota$. 
Let $\hat{B}_i$ be the image of the simple closed curve $B_i$ under $/\iota$. 
Since the image $\iota_\ast(t_{B_i})$ is the half twist $\tau_{\hat{B}_i}$, the following holds in $\Mod(\Sigma_g^2;\hat{U}, \{v_1,v_2\})$:
\begin{equation}\label{eq_quotient1}
\iota_\ast(t_{B_0}\cdots t_{B_{2g+1}}) = \tau_{\hat{B}_0}\cdots \tau_{\hat{B}_{2g+1}}. 
\end{equation} 
Let $\kappa$ be an involution of $\Sigma_g^2$ as described in the middle of Figure~\ref{quotient_spaces} and $w_0,\ldots,w_{2g+1}\in \Sigma_g^2$ fixed points of $\kappa$. 
We regard the half twist $\tau_{\hat{B}_i}$ as an element in $\pi_0(C(\Sigma_g^2,\hat{U}; \kappa), \id)$. 
The quotient space $\Sigma_g^2/\kappa$ is homeomorphic to $\Sigma_0^1$. 
Thus the quotient map $/\kappa: \Sigma_g^2\rightarrow \Sigma_0^1$ induces the following homomorphism: 
\[
\kappa_\ast : \pi_0(C(\Sigma_g^2,\hat{U}; \kappa)) \rightarrow \Mod_{\partial\Sigma_0^1}(\Sigma_0^1; \{\hat{w}_0,\ldots, \hat{w}_{2g+1}\}), 
\]
where $\hat{w}_i$ is the image $/\kappa(w_i)$. 
By Lemma~\ref{lem_involutionMCG} the kernel of $\kappa_\ast$ is generated by the isotopy class of $\kappa$. 
Let $\hat{\hat{B}}_i$ be the image of $\hat{B}_i$ under $/\kappa$ (see Figure~\ref{quotient_spaces}). 
We take an oriented loop $\beta_i\subset \Sigma_0^1$ based at $\hat{v}_1= /\kappa(v_1)$ by connecting $p_0$ with a small circle around $\hat{w}_i$ oriented counterclockwise using $\hat{\hat{B}}_i$. 
The following equation holds in $\Mod_{\partial \Sigma_0^1}(\Sigma_0^1; \{\hat{w}_0,\ldots, \hat{w}_{2g+1}\})$: 
\allowdisplaybreaks{
\begin{align}\label{eq_quotient2}
\begin{split}
\kappa_\ast(\tau_{\hat{B}_0}\cdots \tau_{\hat{B}_{2g+1}}) & = \Push(\beta_0)\cdots \Push(\beta_{2g+1}) \\ 
& = \Push(\mu), 
\end{split}
\end{align}
}
where $\mu$ is an oriented based loop described in Figure~\ref{quotient_spaces}. 
Combining the equations \eqref{eq_quotient1} and \eqref{eq_quotient2}, we obtain the following relation in $\Mod(\Sigma_{2g}^4; U)$: 
\[
t_{B_0} \cdots t_{B_{2g+1}} =\widetilde{\tau}^{-1} \widetilde{\theta}^{-1} t_{C_1} t_{C_2} t_{\delta_1} t_{\delta_2} \iota, 
\]
where $\theta\subset \Sigma_{2g}^4$ is a path between $u_3$ and $u_4$ described in Figure~\ref{path_forknotsurgery}. 
\begin{figure}[htbp]
\begin{center}
\includegraphics[width=55mm]{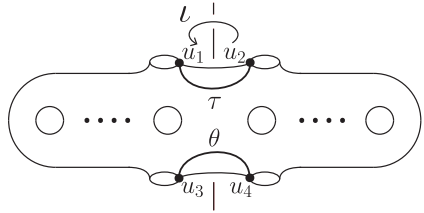}
\end{center}
\caption{Paths in $\Sigma_{2g}^4$. }
\label{path_forknotsurgery}
\end{figure}
Thus, the product $t_{A_{2n-2}}\cdots t_{A_1}t_{A_1} \cdots t_{A_{2n-2}} t_{B_0}\cdots t_{B_{2g+1}}$ is calculated as follows:
\allowdisplaybreaks{
\begin{align*}
& t_{A_{2n-2}}\cdots t_{A_1}t_{A_1} \cdots t_{A_{2n-2}} t_{B_0}\cdots t_{B_{2g+1}} \\
= & t_{A_{2n-2}}\cdots t_{A_1}t_{A_1} \cdots t_{A_{2n-2}} \tilde{\tau}^{-1}\tilde{\theta}^{-1}t_{C_1} t_{C_2} t_{\delta_1} t_{\delta_2} [\iota|_{\Sigma_{2g}^4}] \\
= & t_{A_{2n-2}}\cdots t_{A_1}t_{A_1} \cdots t_{A_{2n-2}} (t_{A_{2n-3}} \cdots t_{A_{1}})^{2n-2} \tilde{\tau}^{-1}\tilde{\theta}^{-1}t_{\delta_1} t_{\delta_2} [\iota|_{\Sigma_{2g}^4}] \\
= & t_{A_{2n-2}}\cdots t_{A_1}t_{A_1} \cdots t_{A_{2n-3}} (t_{A_{2n-2}} \cdots t_{A_{1}}) (t_{A_{2n-3}} \cdots t_{A_{1}})^{2n-3} \tilde{\tau}^{-1}\tilde{\theta}^{-1}t_{\delta_1} t_{\delta_2} [\iota|_{\Sigma_{2g}^4}] \\
= & t_{A_{2n-2}}\cdots t_{A_1}t_{A_1} \cdots t_{A_{2n-4}} (t_{A_{2n-2}} \cdots t_{A_{1}})^2 (t_{A_{2n-3}} \cdots t_{A_{1}})^{2n-4} \tilde{\tau}^{-1}\tilde{\theta}^{-1}t_{\delta_1} t_{\delta_2} [\iota|_{\Sigma_{2g}^4}] \\
= & \cdots \\
= & (t_{A_{2n-2}} \cdots t_{A_{1}})^{2n-1} \tilde{\tau}^{-1}\tilde{\theta}^{-1}t_{\delta_1} t_{\delta_2} [\iota|_{\Sigma_{2g}^4}]. 
\end{align*}
}
It is easy to verify (using the Alexander method, for example) that the product $(t_{A_{2n-2}} \cdots t_{A_{1}})^{2n-1}\tilde{\theta}^{-1} [\iota|_{\Sigma_{2g}^4}]$
is equal to $\iota$ in $\Mod(\Sigma_{2g+n-1}^2; \{u_1, u_2\})$. 
Thus, we obtain: 
\[
t_{A_{2n-2}}\cdots t_{A_1}t_{A_1} \cdots t_{A_{2n-2}} t_{B_0}\cdots t_{B_{2g+1}}=\iota \tilde{\tau}^{-1}t_{\delta_1} t_{\delta_2}. 
\]
This completes the proof of Proposition \ref{prop_relation_surface}. 
\end{proof}

We take simple closed curves $c_1,c_2,c_3$ in $\Sigma_0^{2m+1}$ and points $u_1,\ldots, u_{2m}$ on the boundary of $\Sigma_0^{2m+1}$ as described in Figure~\ref{path_disk}. 
\begin{figure}[htbp]
\begin{center}
\includegraphics[width=80mm]{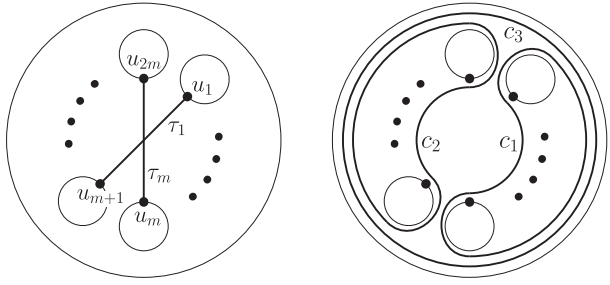}
\end{center}
\caption{Simple closed curves and arcs in the disk with $2m$ small disks removed. }
\label{path_disk}
\end{figure}
Let $\tau_i$ be a radial arc between $u_i$ and $u_{m+i}$ and $U^\prime$ the set $\{u_1,\ldots, u_{2m}\}$. 
We denote by $\lambda \in \Mod_{\partial \Sigma_0} (\Sigma_0^{2m+1}; U^\prime)$ a mapping class represented by a diffeomorphism which is positive $180$-degree rotation inside of $\nu c_3$ and preserves the outside of $\nu c_3$, where $\partial \Sigma_0$ is the outermost boundary component of $\Sigma_0^{2m+1}$ in Figure~\ref{path_disk}. 

\begin{proposition}\label{prop_relation_disk}

The following equality holds in $\Mod_{\partial \Sigma_0} (\Sigma_0^{2m+1}; U^\prime)$: 
\[
t_{c_1}t_{c_2}\lambda^{-1} = t_{\delta_1}\cdots t_{\delta_{2m}} \widetilde{\tau_1}^{-1} \cdots \widetilde{\tau_m}^{-1},  
\]
where $\widetilde{\tau_{i}}\in \Mod_{\partial \Sigma_0} (\Sigma_0^{2m+1}; U^\prime)$ is a lift of a half twist along $\tau_i$ as described in Figure~\ref{lift_monodromy1}. 

\end{proposition}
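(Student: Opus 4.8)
The plan is to follow the same involution--quotient strategy used in Proposition~\ref{prop_relation_surface}, reducing the asserted identity to a point--pushing relation on a smaller planar surface and then lifting it back with Lemma~\ref{lem_involutionMCG}. Concretely, I would take $\eta$ to be the genuine $\pi$-rotation of $\Sigma_0^{2m+1}$ of which $\lambda$ is the representative supported in $\nu c_3$: this $\eta$ swaps the $i$-th inner boundary with the $(m+i)$-th and $u_i$ with $u_{m+i}$, so it preserves the set $U'$ and descends to the framed group, and it has a single interior fixed point at the centre of $\nu c_3$. After removing an $\eta$-invariant disk neighbourhood of this fixed point and passing to the quotient exactly as in the fixed-point case of Lemma~\ref{lem_involutionMCG}, I obtain a quotient surface $\Sigma_0^{m+1}$ together with its capping homomorphism and an injective map $\eta_\ast$ whose kernel is generated by $[\lambda]$.

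Next I would identify the images of all factors under $/\eta$ by the Alexander method, in direct analogy with the computation \eqref{eq_quotient2}: the lift $\widetilde{\tau_i}$ of the half twist along the $\eta$-swapped arc $\tau_i$ projects to a point-pushing map $\Push(\beta_i)$ along a loop $\beta_i$ encircling the image of the $i$-th hole; the boundary twists pair up as $\eta_\ast(t_{\delta_i}t_{\delta_{m+i}})=t_{\hat{\delta}_i}$; and the twists $t_{c_1},t_{c_2}$ descend to the corresponding twists about their $\eta$-images. Since $\lambda$ is itself the chosen representative of the deck transformation, $\eta_\ast(\lambda)$ is trivial. Thus the proposition will follow once I verify, on $\Sigma_0^{m+1}$, the standard disk-pushing identity
\[
\eta_\ast(t_{c_1}t_{c_2}) = t_{\hat{\delta}_1}\cdots t_{\hat{\delta}_m}\,\Push(\beta_1)^{-1}\cdots \Push(\beta_m)^{-1},
\]
which is checked by the Alexander method once each $\Push(\beta_i)$ is expanded into boundary twists and the product is telescoped against the $t_{\hat{\delta}_i}$ and the images of $c_1,c_2$.

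Because $\ker(\eta_\ast)=\langle[\lambda]\rangle$ is infinite cyclic (the full rotation $\lambda^2$ is the nontrivial product of $t_{c_3}$ with inner boundary twists, not the identity), lifting the quotient identity a priori yields the asserted relation only up to an overall power $\lambda^{k}$, and pinning down $k=0$ is the step I expect to require the most care. I would fix this ambiguity by evaluating $t_{c_1}t_{c_2}\lambda^{-1}(\text{RHS})^{-1}=\lambda^{k}$ on a single properly embedded arc that $\lambda$ moves nontrivially --- for instance one of the $\tau_i$ extended by an arc running out to $\partial\Sigma_0$ --- so that distinct powers of $\lambda$ are distinguished by their action; a coarser permutation count on $U'$ (where $\lambda$ realizes the pairing $u_i\leftrightarrow u_{m+i}$ and $\lambda^2$ fixes $U'$ pointwise) already rules out the wrong parity. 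This orientation and arc-bookkeeping, rather than any deep input, is the only genuine obstacle, since all the surface-symmetry and Alexander-method steps are routine given Lemma~\ref{lem_involutionMCG} and the computations already carried out in Proposition~\ref{prop_relation_surface}.
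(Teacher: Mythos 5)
Your proposal follows the same skeleton as the paper's proof (quotient by the $\pi$-rotation, reduce to a point-pushing identity on $\Sigma_0^{m+1}$, lift back via Lemma~\ref{lem_involutionMCG}), but two of its central claims are false, and both stem from conflating $\lambda$ with the genuine rotation. The honest involution (your $\eta$, the paper's $\tilde\lambda$) rotates the outer boundary $\partial \Sigma_0$, so it does \emph{not} lie in the framed group $\Mod_{\partial\Sigma_0}(\Sigma_0^{2m+1};U')$ in which the proposition is stated, and $\lambda$ is \emph{not} a representative of it there: $\lambda$ agrees with $\eta$ only inside $\nu c_3$ and is damped to the identity on the annulus $A$ between $c_3$ and $\partial\Sigma_0$. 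On $A$ the quotient map is a free double cover, and the damped half-rotation of $A$ descends to a damped \emph{full} rotation of $A/\eta$; writing $\hat{c}_3\subset \Sigma_0^{m+1}$ for the image of $c_3$, this means $\eta_\ast(\lambda)=t_{\hat{c}_3}$, a boundary-parallel Dehn twist which is nontrivial in $\Mod_{\partial\Sigma_0}(\Sigma_0^{m+1};\{u_0\},\hat{U}')$ --- not the identity, as you assert. Consequently the quotient identity you propose to verify by the Alexander method omits the factor $\eta_\ast(\lambda^{-1})=t_{\hat{c}_3}^{-1}$ carried by the true relation, so it differs from the image of the proposition by a nontrivial element and is false; indeed, if it held, injectivity of $\eta_\ast$ would give $t_{c_1}t_{c_2}=t_{\delta_1}\cdots t_{\delta_{2m}}\widetilde{\tau_1}^{-1}\cdots\widetilde{\tau_m}^{-1}$ upstairs, i.e.\ the proposition with $\lambda^{-1}$ deleted, a contradiction since $\lambda\neq 1$ in the framed group.

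Second, the claim $\ker(\eta_\ast)=\langle[\lambda]\rangle\cong\Z$ is incompatible with Lemma~\ref{lem_involutionMCG}: there the kernel is generated by the class of the \emph{involution itself} when the involution belongs to the centralizer group, and is trivial otherwise; it is never infinite cyclic, and $[\lambda]$ is not even in the kernel since, as above, it maps to a boundary twist. In the boundary-framed setting needed here one has $\eta\notin C_{\partial\Sigma_0}(\Sigma_0^{2m+1},U';\eta)$, precisely because $d\eta|_{\partial\Sigma_0}\neq \id$, so the kernel is trivial and $\eta_\ast$ is injective --- this is exactly what the paper exploits. There is therefore no ambiguity ``up to $\lambda^{k}$'' to resolve, and your closing paragraph repairs a problem that does not exist while leaving the real discrepancy (the missing $t_{\hat{c}_3}^{-1}$) in place. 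The correct route, which is the paper's, keeps $\lambda$ and $\eta$ distinct: work in the framed centralizer so that $\eta_\ast$ is injective, record that $\eta_\ast(\lambda)=t_{\hat{c}_3}$ and that $\eta_\ast(\widetilde{\tau_i}^{-1}t_{\delta_i}t_{\delta_{m+i}})$ is a point push of $u_0$ (note: the push arises from this \emph{combination}, not from $\widetilde{\tau_i}$ alone), and then verify the resulting identity, including the $t_{\hat{c}_3}^{-1}$ term, inside $\pi_1(\Sigma_0^{m+1}\setminus \hat{U}',u_0)$.
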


\begin{proof}

We denote the involution of $\Sigma_0^{2m+1}$ given by the $180$-rotation by $\tilde{\lambda}$. 
We regard $\widetilde{\tau_i}$  and $t_{\delta_i}t_{\delta_{m+i}}$ as elements in $\pi_0(C(\Sigma_0^{2m+1}, U^\prime; \tilde{\lambda}))$. 
The quotient map $/\tilde{\lambda}: \Sigma_0^{2m+1} \rightarrow \Sigma_0^{2m+1}/\tilde{\lambda}\cong \Sigma_0^{m+1}$ induces the following homomorphism: 
\[
\tilde{\lambda}_\ast : \pi_0(C(\Sigma_0^{2m+1},U^\prime; \tilde{\lambda})) \rightarrow \Mod_{\partial \Sigma_0}(\Sigma_0^{m+1}; \{u_0\}, \hat{U}^\prime), 
\]
where $u_0\in \Sigma_0^{m+1}$ is the image of the origin of the disk under $/\tilde{\lambda}$ and $\hat{U}^\prime = /\lambda(U^\prime)$. 
By Lemma~\ref{lem_involutionMCG} the map $\tilde{\lambda}_\ast$ is an isomorphism and the image $\tilde{\lambda}_\ast(\widetilde{\tau_i}^{-1}t_{\delta_i}t_{\delta_{m+i}})$ is a pushing map along some loop based at $u_0$. 
We can easily obtain the equality in Proposition \ref{prop_relation_disk} using these fact together with some equality in $\pi_1(\Sigma_0^{m+1} \setminus \hat{U}^\prime, u_0)$. 
The details are left to the readers. 
\end{proof}

We remove $m$ disks from the disk $\Sigma_0$ to obtain $\Sigma_0^{m+1} \subset \Sigma_0$. 
We obtain the surface $\Sigma_{2g+n-1}^{2m}$ by attaching two $\Sigma_0^{m+1}$'s to $\Sigma_{2g+n-1}^2$: 
\[
\Sigma_{2g+n-1}^{2m} = \Sigma_{2g+n-1}^{2} \cup_{\partial \Sigma_{2g+n-1}^{2} = \partial \Sigma_0 \amalg \partial \Sigma_0 } (\Sigma_0^{m+1} \amalg \Sigma_0^{m+1}). 
\]
Combining the equalities in Propositions \ref{prop_relation_surface} and \ref{prop_relation_disk}, we obtain the following equality in $\Mod(\Sigma_{2g+n-1}^{2m}; U^\prime)$: 
\[
\eta_{n,g} \eta_{n,g} \Phi_K(\eta_{n,g}) \Phi_K(\eta_{n,g}) = t_{\delta_1}^4\cdots t_{\delta_{2m}}^4 \widetilde{\tau_1}^{-4} \cdots \widetilde{\tau_m}^{-4}.  
\]
Eventually, for arbitrarily large $m$, we can find $m$ disjoint bisections in the Lefschetz fibration $f_{n,K}:E(n)_K\rightarrow S^2$ each of which has self-intersection $0$. 
Furthermore, each of the bisections has $4$ branched points. 
Thus, all the bisections are tori.

\begin{remark}
It is in fact possible to generalize these examples to cover knot surgered elliptic surfaces which are not symplectic, when the knots used in the construction are not fibered. In this case, following the arguments in \cite{Baykur}, we instead obtain a broken Lefschetz fibration on each knot surgered $4$-manifold, where Seiberg-Witten basic classes still appear as a collection of torus bisections. 
\end{remark}

\vspace{0.2in}
\noindent \textit{Acknowledgements.} The first author was partially supported by the NSF grant DMS-0906912 and the ERC Grant LDTBud. The second author was partially supported by JSPS Research Fellowships for Young Scientists~(24$\cdot$993) and JSPS KAKENHI (26800027). We also would like to thank Ersin \c{C}elik for running the code for our calculations to detect the smallest genera exotic pencils.

\vspace{0.2in}

\end{document}